\documentclass[12pt]{amsart}
\usepackage[all]{xy}
\usepackage[parfill]{parskip}

\usepackage{amsfonts}
\usepackage{mathrsfs}
\usepackage[abs]{overpic}

\usepackage{palatino} 

\usepackage{hyperref}

\textwidth = 6.25 in
\textheight = 8.87 in
\oddsidemargin = 0.25 in
\evensidemargin = 0.25 in
\topmargin = -0.25 in

\usepackage{verbatim}
\usepackage{color}

\usepackage{amsmath, amscd, graphicx, latexsym, hyperref, times, calc}
\usepackage{color,graphicx}
\usepackage[abs]{overpic}
\usepackage{tikz}
\usepackage{tikz-cd}
\usetikzlibrary{arrows, patterns}


\newtheorem{dummy}{anything}[section]
\newtheorem{theorem}[dummy]{Theorem}

\newtheorem{lemma}[dummy]{Lemma}

\newtheorem{question}[dummy]{Question}

\theoremstyle{definition}

\newtheorem{remark}[dummy]{Remark}

\newtheorem*{acknowledgements}{Acknowledgements}





\usepackage[utf8]{inputenc}

\title{On geography of symplectic fillings of contact branched covers}
\author{Youlin Li and Yuhe Zhang}

\address{School of Mathematical Sciences, Shanghai Jiao Tong University, Shanghai 200240, China}
\email{liyoulin@sjtu.edu.cn}

\address{School of Mathematical Sciences, Shanghai Jiao Tong University, Shanghai 200240, China}
\email{yuhe\_tmp@sjtu.edu.cn}


\begin{document}

\maketitle

\begin{abstract}
In this paper, we determine the Euler characteristics and signatures of the exact symplectic fillings of the contact double, 3-fold or 4-fold cyclic covers of the standard contact 3-sphere branched over certain transverse quasi-positive links. These links include all quasi-positive knots with crossing numbers smaller than 11 and all quasi-positive links with crossing numbers smaller than 12 and nonzero nullity.
\end{abstract}

\section{Introduction}
Given a closed contact 3-manifold, understanding its symplectic fillings is an important topic in contact topology. This topic was intensively studied in the last two decades. There are plenty of contact 3-manifolds whose symplectic fillings have been classified up to certain equivalence relations. For example the standard contact 3-sphere \cite{eli},  the  contact lens spaces \cite{m, l1, er, cl}, etc. In \cite{s}, Stipsicz raised the geography problem for Stein fillings: Fix a contact 3-manifold $(M,\xi)$ and describe characteristic numbers (first Betti numbers, Euler characteristic numbers and signatures) of Stein fillings of it. Since then, the geography for symplectic fillings of many contact 3-manifolds have been figured out, even though their symplectic fillings have not been classified yet. For example some contact Brieskorn spheres \cite{s, s1, eg},   the contact 3-manifolds supported by planar (spinal) open book decompositions \cite{e1, k1, lw}, the contact 3-manifolds satisfying certain Floer homology conditions \cite{os, lin}, and the contact 3-manifolds wearing certain symplectic caps \cite{lmy, lma}, etc.

Given a transverse link $L$ in the standard contact 3-sphere $(S^3, \xi_{st})$, there is a canonical way to construct a contact structure on the manifold which is a branched covering of $S^3$ over $L$ \cite{g}. In fact, any closed contact 3-manifold can be described as such a contact cover of $(S^3, \xi_{st})$ branched over some transverse link \cite{gi}. In this paper, we consider the exact symplectic fillings of the contact cyclic covers of $(S^3, \xi_{st})$ branched over transverse links. A transverse link in $(S^3, \xi_{st})$ can be represented by a closed braid, and vice versa \cite{b,e}. An $n$-string braid is called quasi-positive if it can be presented by $\prod\limits_{i=1}^{m} w_{i}\sigma_{j_i}w^{-1}_{i}$, where $\sigma_{j_i}$ is a standard generator of the $n$-string braid group, and $w_i$ is any word in the $n$-string braid group. We call $w_{i}\sigma_{j_i} w^{-1}_{i}$ a positive band.  An oriented link is quasi-positive if it is the closure of a quasi-positive braid. By \cite[Proposition 1.4]{p} and \cite[Theorem 1.3]{hkp}, a contact cyclic cover of $(S^3, \xi_{st})$ branched over a quasi-positive transverse link is Stein fillable.

For a transverse link $L$ in $(S^3, \xi_{st})$ and a symplectic cap $C$ of $(S^3, \xi_{st})$, Etnyre and Golla \cite{eg} defined the symplectic hat of $L$ to be a properly embedded symplectic surface $\hat{F}$ in $C$ with $\partial \hat{F}=-L$. They studied various kinds of special symplectic hats \cite{eg}, among which projective hats and Hirzebruch hats are useful in the study of symplectic fillings. A symplectic hat of a transverse link in $(S^3, \xi_{st})$ in the projective cap $\mathbb{C}P^{2}-\text{Int}(B^4)$  (Hirzebruch cap $\mathbb{C}P^{1}\times \mathbb{C}P^{1}-\text{Int}(B^4)$, resp.)  is called a projective hat (Hirzebruch hat, resp.). A projective hat is of degree $d$ if its intersection number with the line at infinity $l_{\infty}$ is $d$.  A Hirzebruch hat is of bidegree $(d_1, d_2)$ if its intersetion numbers with the two  $\mathbb{C}P^{1}$ factors are $d_1$ and $d_2$ respectively. 

Li, Mak and Yasui defined and studied Calabi-Yau caps for contact 3-manifolds \cite{lmy}, which put strong constraints on the topology of exact symplectic fillings.  A Calabi-Yau cap for a contact 3-manifold is a symplectic cap with torsion first Chern class.

In \cite{eg}, utilizing the projective hats and Hirzebruch hats, Etnyre and Golla constructed Calabi-Yau caps and then determined the intersection forms of exact symplectic fillings of contact double covers of $(S^3, \xi_{st})$ branched over all determinant-$1$ quasi-positive transverse knots which have crossing numbers $\leq 12$ and contact double, 3-fold and 4-fold cyclic covers of $(S^3, \xi_{st})$ branched over all slice quasi-positive transverse knots which have crossing numbers $\leq 12$.

In this paper, we determine the geography of the exact fillings of the contact double, 3-fold and 4-fold cyclic cover of $(S^3,\xi_{st})$ branched over the quasi-positive transverse links under some assumptions.  We denote by $\Sigma_{r}(L)$ the contact $r$-fold cyclic cover of $(S^3, \xi_{st})$ branched over a transverse link $L$.

\begin{theorem}\label{rfold}
Suppose $L$ is a quasi-positive transverse link in $(S^3, \xi_{st})$, and the braid representing $L$ is of $n$-string and has $m$ bandwords.  If  
\begin{itemize}
\item[(1)] $L$ wears a projective hat of degree $6$ and $r=2$, or
\item[(2)] $L$ wears a Hirzebruch hat of bidegree $(3,3)$ and $r=3$, or
\item[(3)] $L$ wears a projective hat of degree $4$ and $r=4$,
\end{itemize}
then any exact filling $W$ of $\Sigma_{r}(L)$ is spin and has 
$$\chi(W)=r-(r-1)(n-m)~ \text{and}~ \sigma(W)= \sum\limits_{k=1}^{r-1}\sigma_{L}(\zeta^{k}),$$
provided that 
\begin{equation*}
  (r-1)(1-n+m)+2\sum\limits_{k=1}^{r-1}\eta_{L}(\zeta^{k})+\sum\limits_{k=1}^{r-1}\sigma_{L}(\zeta^{k})\leq 3,  
\end{equation*}
and
\begin{equation*}
(r-1)(1-n+m)+2\sum\limits_{k=1}^{r-1}\eta_{L}(\zeta^{k})-\sum\limits_{k=1}^{r-1}\sigma_{L}(\zeta^{k})\leq 25.    
\end{equation*}
where $\sigma_{L}:S^{1}\rightarrow \mathbb{Z}$ is the Levine-Tristram signature function of $L$, $\eta_{L}:S^{1}\rightarrow \mathbb{Z}$ is the Levine-Tristram nullity function of $L$, and $\zeta=e^{\frac{2\pi i}{r}}$.
\end{theorem}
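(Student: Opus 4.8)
The plan is to use the symplectic hat to manufacture a Calabi--Yau cap for $\Sigma_{r}(L)$, glue it to an arbitrary exact filling, and force the resulting closed symplectic manifold to be a $K3$ surface; the advertised invariants of $W$ then fall out by subtracting the (computable) contribution of the cap, and spinness follows from the even intersection form of $K3$.

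First I would construct the cap. Taking the $r$-fold cyclic cover of the projective cap $\mathbb{C}P^{2}-\text{Int}(B^{4})$ (resp. the Hirzebruch cap) branched over the symplectic hat $\hat F$ produces a symplectic cap $C_{r}$ of $\Sigma_{r}(L)$, following the hat constructions of Etnyre--Golla \cite{eg}. The three numerical hypotheses are precisely those making the canonical class of the associated \emph{closed} branched cover vanish: the double cover of $\mathbb{C}P^{2}$ branched over a sextic, the triple cover of $\mathbb{C}P^{1}\times\mathbb{C}P^{1}$ branched over a $(3,3)$-curve, and the quadruple cover of $\mathbb{C}P^{2}$ branched over a quartic are all symplectic $K3$ surfaces (in each case $\chi=24$, $\sigma=-16$). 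Hence $c_{1}(C_{r})$ is torsion, so $C_{r}$ is a Calabi--Yau cap. I would then compute $\chi(C_{r})=r\,\chi(\text{cap})-(r-1)\chi(\hat F)$ from multiplicativity of the Euler characteristic under branched covers, and $\sigma(C_{r})$ from the $G$-signature theorem; the boundary defect in the latter is exactly what injects the Tristram--Levine signatures $\sigma_{L}(\zeta^{k})$. These match the reference filling $W_{0}$, the $r$-fold cyclic cover of $B^{4}$ branched over the quasipositive Bennequin surface (of Euler characteristic $n-m$), for which $\chi(W_{0})=r-(r-1)(n-m)$ and $\sigma(W_{0})=\sum_{k=1}^{r-1}\sigma_{L}(\zeta^{k})$, and which satisfies $W_{0}\cup C_{r}=K3$.

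Next, for any exact filling $W$, form the closed symplectic $4$-manifold $X=W\cup C_{r}$. Additivity gives $\chi(X)=\chi(W)+\chi(C_{r})$, and Novikov additivity gives $\sigma(X)=\sigma(W)+\sigma(C_{r})$, the Wall correction being governed by $b_{1}(\Sigma_{r}(L))=\sum_{k=1}^{r-1}\eta_{L}(\zeta^{k})$. Because $C_{r}$ is a Calabi--Yau cap, the Li--Mak--Yasui machinery \cite{lmy} forces $b_{2}^{+}(W)=0$ and makes $X$ a symplectic Calabi--Yau surface, so by the classification of such surfaces its minimal model is a $\mathbb{Q}$-homology $K3$, Enriques, or torus/Kodaira-type surface, all with $b_{2}^{+}\le 3$. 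The positive part already carried by $C_{r}$ forces $b_{2}^{+}(X)=3$; combined with $\chi(X)>0$ this excludes the Enriques and torus/Kodaira cases, leaving only a (possibly blown-up) $K3$.

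The crux is to rule out blow-ups and thereby identify $X$ with the minimal $K3$. Writing $2b_{2}^{\pm}(X)=\chi(X)-2+2b_{1}(X)\pm\sigma(X)$ and substituting the computed cap invariants together with $b_{2}^{+}(W)=0$, the two displayed hypotheses translate into the upper bounds on $b_{2}^{+}(X)$ and on $b_{2}^{-}(X)$ (equivalently on $c_{1}^{2}(X)=2\chi(X)+3\sigma(X)$) that corner $X$ into the $K3$ locus of the symplectic geography; since any exact region $W$ contains no closed symplectic submanifold, the exceptional spheres of a non-minimal model cannot be absorbed, and the numerical bounds leave no room for them, so $X=K3$. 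Subtracting $\chi(C_{r})$ and $\sigma(C_{r})$ then yields $\chi(W)=\chi(W_{0})$ and $\sigma(W)=\sigma(W_{0})$, and the even intersection form of $K3$ restricts to an even form on $W$, so $W$ is spin. I expect the main obstacle to be the signature bookkeeping: evaluating $\sigma(C_{r})$ honestly through the $G$-signature theorem, and controlling the Wall non-additivity term, so that the nullities $\eta_{L}(\zeta^{k})$ enter the two inequalities with exactly the stated coefficients and the classification step closes off every alternative to the minimal $K3$.
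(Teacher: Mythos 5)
Your overall architecture is the same as the paper's: take Rudolph's quasipositive surface $\check F\subset B^4$, cap it off with the hat $\hat F$ to get a symplectic curve of degree $6$ (resp.\ bidegree $(3,3)$, degree $4$) whose $r$-fold cyclic branched cover is a K3, so that $C=K3-\mathrm{Int}(\Sigma_r(\check F))$ is a Calabi--Yau cap; glue $C$ to an arbitrary exact filling $W$, invoke the Kodaira-dimension-zero classification to identify $X=W\cup C$ with a homology K3, and subtract. The reference filling $W_0=\Sigma_r(\check F)$ and the identification of $\sigma(\Sigma_r(\check F))$ with $\sum_k\sigma_L(\zeta^k)$ via Viro's theorem (the paper's Lemma~\ref{sig1}) also match your G-signature computation.

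However, there is a genuine gap in the one step where the two displayed inequalities enter, and it is precisely the step those hypotheses exist for. You assert that they "translate into upper bounds on $b_2^{+}(X)$ and $b_2^{-}(X)$" used to rule out blow-ups. This is backwards: nothing bounds $b_2^{-}(X)$ from above a priori, since $b_2(W)$ is exactly the unknown the theorem determines, and ruling out blow-ups is not where the work is --- once $c_1(X)$ is shown to be torsion (the Sivek--Van Horn-Morris/Etnyre--Golla argument), $X$ is automatically minimal because an exceptional sphere would pair nontrivially with $c_1$. The actual role of the hypotheses is to produce \emph{lower} bounds on the cap: combining the Mayer--Vietoris estimate $b_2(C)\geq 22-(r-1)(1-n+m)-\sum_k\eta_L(\zeta^k)$, the Li--Mak--Yasui bound $b_2^{0}(C)\leq b_1(\Sigma_r(L))=\sum_k\eta_L(\zeta^k)$ (this is where the factor $2$ on the nullities comes from), and $\sigma(C)=-16-\sum_k\sigma_L(\zeta^k)$, the first inequality gives $b_2^{+}(C)\geq 2$ and the second gives $b_2^{-}(C)\geq 7$. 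These are what make the classification step close: $b_2^{+}(X)\geq 2$ is needed before Taubes/Seiberg--Witten theory applies at all and excludes the Enriques rational homology type ($b_2^{+}=1$), while $b_2(X)\geq 7$ excludes torus bundles over tori ($b_2\leq 6$). Your substitute criteria do not work: "$\chi(X)>0$" is not established (it involves the unknown $\chi(W)$) and would not exclude the Enriques surface ($\chi=12$) anyway, and "$b_2^{+}(X)=3$ is forced by the positive part of $C_r$" presupposes the very bound $b_2^{+}(C)\geq 2$ (in fact $\ge 2$, not $=3$) that you never derive. Finally, for spinness you need more than the rational homology type: the paper upgrades $X$ to an \emph{integer} homology K3 by a covering space argument before concluding that $W$ is spin.
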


\begin{theorem}\label{rfoldcover}
Suppose $L$ is a quasi-positive transverse link in $(S^3, \xi_{st})$ with $\sum\limits_{k=1}^{r-1}\eta_{L}(\zeta^{k})=0$, and the braid representing $L$ is of $n$-string and has $m$ bandwords.  If 
\begin{itemize}
\item[(1)] $L$ wears a projective hat of degree $6$  and $r=2$, or
\item[(2)] $L$ wears a Hirzebruch hat of bidegree $(3,3)$ and $r=3$, or
\item[(3)] $L$ wears a projective hat of degree $4$ and $r=4$,
\end{itemize}
then any exact filling $W$ of $\Sigma_{r}(L)$ is spin and has  $$b_{1}(W)=0,
\chi(W)=r-(r-1)(n-m)~\text{and} ~ \sigma(W)=\sum\limits_{k=1}^{r-1}\sigma_{L}(\zeta^{k})$$
provided that 
\begin{equation*}
(r-1)(1-n+m)+\sum\limits_{k=1}^{r-1}\sigma_{L}(\zeta^{k})\leq2, \end{equation*}
and
\begin{equation*}
(r-1)(1-n+m)\leq15.  
\end{equation*}
\end{theorem}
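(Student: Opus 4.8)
The plan is to cap off $W$ with the Calabi--Yau cap built in the proof of Theorem~\ref{rfold} and to read the topology of $W$ off from the classification of symplectic Calabi--Yau surfaces, the hypothesis $\sum_{k=1}^{r-1}\eta_L(\zeta^k)=0$ being what makes the gluing bookkeeping rigid. In each of the three cases the $r$-fold cyclic cover $C$ of the projective (resp.\ Hirzebruch) cap along the capped-off hat $\hat F$ has torsion canonical class, and the degree and bidegree choices are precisely those for which the corresponding closed cyclic branched cover of $\mathbb{C}P^{2}$ (resp.\ $\mathbb{C}P^{1}\times\mathbb{C}P^{1}$) is a K3 surface. Setting $s=(r-1)(1-n+m)$ and decomposing that reference K3 as the $r$-fold branched cover of $B^4$ over a pushed-in Bennequin surface ($\chi=n-m$) glued to $C$, and using that this branched cover of $B^4$ has Euler characteristic $r-(r-1)(n-m)=1+s$ and signature $\sum_{k=1}^{r-1}\sigma_L(\zeta^k)$, I would compute the invariants of the cap: $b_1(C)=0$, $\chi(C)=23-s$, $\sigma(C)=-16-\sum_{k=1}^{r-1}\sigma_L(\zeta^k)$, $b_2(C)=22-s$, and hence $b_2^+(C)=3-\frac12\big(s+\sum_{k=1}^{r-1}\sigma_L(\zeta^k)\big)$.

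Next I would glue $X=W\cup_{\Sigma_r(L)}C$. Since $W$ is exact and $C$ is a Calabi--Yau cap, $X$ is a closed symplectic $4$-manifold with torsion canonical class, hence minimal, hence by Li's classification a rational homology torus bundle over a torus, Enriques surface, or K3 surface. The hypothesis $\sum_{k=1}^{r-1}\eta_L(\zeta^k)=0$ means $b_1(\Sigma_r(L))=0$, so $\partial W$ is a rational homology sphere; Mayer--Vietoris then splits $H_*(X;\mathbb{Q})$ as $H_*(W;\mathbb{Q})\oplus H_*(C;\mathbb{Q})$ with the intersection form splitting orthogonally. In particular $b_1(X)=b_1(W)$, $b_2^+(X)=b_2^+(W)+b_2^+(C)$, and $b_2(X)=b_2(W)+22-s$.

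Now I would run the two inequalities. Nonnegativity of $b_2^+(W)=b_2^+(X)-3+\frac12(s+\sum_k\sigma_L(\zeta^k))$ together with the first hypothesis $s+\sum_k\sigma_L(\zeta^k)\le 2$ forces $b_2^+(X)\ge 2$, i.e.\ $b_2^+(X)=3$, excluding the Enriques and the $b_1=2$ torus-bundle types (both with $b^+=1$). Nonnegativity of $b_2(W)=b_2(X)-22+s$ together with the second hypothesis $s\le 15$ excludes the remaining $b^+=3$ non-K3 type $T^4$, which would require $b_2(X)=6$ and hence $s\ge 16$. Therefore $X$ is a homology K3: $b_1(X)=0$, $\sigma(X)=-16$, $\chi(X)=24$, and $X$ is spin. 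Feeding this back through the orthogonal splitting gives $b_1(W)=0$; Novikov additivity $\sigma(X)=\sigma(W)+\sigma(C)$ and $\chi(X)=\chi(W)+\chi(C)$ give $\sigma(W)=\sum_{k=1}^{r-1}\sigma_L(\zeta^k)$ and $\chi(W)=r-(r-1)(n-m)$; and $W\subset X$ inherits spinness from the even intersection form of $X$.

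The main obstacle I expect lies in the inputs rather than in this linear algebra: verifying that an exact filling glued to a Calabi--Yau cap really has torsion canonical class, where exactness (not mere Stein fillability) is what is used, via the Li--Mak--Yasui mechanism; correctly computing $b_2^{\pm}(C)$ and $b_1(C)$ for all three branched covers, with the Hirzebruch case $r=3$ needing separate care; and invoking the symplectic Calabi--Yau classification in a form strong enough to guarantee that the $b^+=3$, $\sigma=-16$ case is an (integral) homology K3 and therefore spin, so that the spin conclusion for $W$ is legitimate.
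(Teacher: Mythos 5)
Your proposal is correct and follows essentially the same route as the paper: both cap an arbitrary exact filling $W$ with the Calabi--Yau cap $C=K3-\text{Int}(\Sigma_{r}(\check{F}))$, use the hypothesis $\sum_{k}\eta_{L}(\zeta^{k})=0$ to compute $b_{1}(C)=0$, $b_{2}(C)=22-(r-1)(1-n+m)\geq 7$ and $b_{2}^{+}(C)=3-\tfrac12\bigl((r-1)(1-n+m)+\sum_{k}\sigma_{L}(\zeta^{k})\bigr)\geq 2$ from the two inequalities, and then invoke the symplectic Calabi--Yau classification (the paper simply cites \cite[Proposition 6.3]{eg} for this step) to conclude that $W\cup C$ is a homology K3, from which the spin condition, $b_1$, $\chi$ and $\sigma$ of $W$ follow by Novikov additivity and the orthogonal Mayer--Vietoris splitting. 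The only cosmetic slip is your inference ``$b_2^+(X)\ge 2$, i.e.\ $b_2^+(X)=3$'' (primary-Kodaira-type torus bundles have $b^+=2$), but your subsequent use of $b_2(X)\ge 22-(r-1)(1-n+m)\ge 7>6$ excludes all torus bundles anyway, so the argument is unaffected.
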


\begin{remark}
If $L$ is a transverse knot,  then its self-linking number $sl(L)=m-n$.
\end{remark}

As applications, we consider the cyclic cover of $(S^{3}, \xi_{st})$ branched over quasi-positive transverse knots with crossing number $\leq 10$. We assume that their transverse representations are all come from the closure of the quasi-positive braids appeared in Knotinfo \cite{lm}.  Among these knots, $m(8_{20})$, $m(9_{46})$, $10_{124}$, $10_{140}$ and $m(10_{155})$ have been fully treated, and $8_{21}$ has been partially treated, by Etnyre and Golla in \cite{eg}. 

\begin{theorem}\label{2foldcover}
Any exact filling $W$ of $\Sigma_{2}(K)$ is spin, has $b_1(W)=0$, 

(1) $\chi(W)=3$ and $\sigma(W)=-2$ if $K$ is $3_1$, $5_2$, $7_2$, $7_4$, $8_{21}$, $9_{2}$, $9_{5}$, $9_{35}$, $m(9_{45})$, $m(10_{126})$, $10_{131}$, $10_{133}$, $m(10_{143})$, $m(10_{148})$, $10_{159}$ or $m(10_{165})$. 

(2) $\chi(W)=5$ and $\sigma(W)=-4$ if $K$ is  $5_1$,  $7_3$, $7_5$, $8_{15}$, $9_{4}$, $9_{7}$, $9_{10}$, $9_{13}$, $9_{18}$, $9_{23}$, $9_{38}$, $9_{49}$, $10_{53}$, $10_{55}$, $10_{63}$, $10_{101}$, $10_{120}$, $10_{127}$, $10_{149}$ or $m(10_{157})$, and $\sigma(W)=-2$  if $K$ is $m(10_{145})$.  

(3) $\chi(W)=7$ and $\sigma(W)=-6$ if $K$ is  $7_1$,  $8_{19}$, $9_3$, $9_6$, $9_{9}$, $9_{16}$, $10_{49}$,  $10_{66}$, $10_{80}$, $10_{128}$, $10_{134}$ or $10_{142}$, and $\sigma(W)=-4$ if $K$ is either $10_{154}$ or $10_{161}$.  

(4) $\chi(W)=9$ and $\sigma(W)=-8$ if $K$ is $9_{1}$, 
and $\sigma(W)=-6$ if $K$ is  $10_{139}$ or $10_{152}$. 

\end{theorem}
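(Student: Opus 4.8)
The plan is to obtain Theorem \ref{2foldcover} as a direct specialization of Theorem \ref{rfoldcover} to $r=2$ and item (1) (a projective hat of degree $6$). When $r=2$ we have $\zeta=e^{\pi i}=-1$, so the signature sum $\sum_{k=1}^{r-1}\sigma_L(\zeta^k)$ collapses to the classical signature $\sigma(K)=\sigma_K(-1)$, and the nullity sum $\sum_{k=1}^{r-1}\eta_L(\zeta^k)$ collapses to $\eta_K(-1)$. Since the determinant of a knot is odd and hence nonzero, $\eta_K(-1)=0$ for every $K$ in the list, so the standing hypothesis $\sum_{k=1}^{r-1}\eta_L(\zeta^k)=0$ of Theorem \ref{rfoldcover} holds automatically; this is what forces $b_1(W)=0$ and the spin condition. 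Thus, once the hat is in place and the inequalities are checked, the conclusions $b_1(W)=0$, $\chi(W)$ and $\sigma(W)$ follow immediately.

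For each listed knot I would first fix the quasi-positive braid recorded in KnotInfo \cite{lm}, reading off the number of strings $n$ and the number of bands $m$. The formula $\chi(W)=r-(r-1)(n-m)=2-n+m$ then produces the four values $3,5,7,9$ exactly according to whether $m-n=sl(K)$ equals $1,3,5,7$; this is precisely the partition of the knots into items (1)--(4), organized by self-linking number, equivalently by slice genus, since $sl=2g_4-1$ for quasi-positive knots. The signature $\sigma(W)=\sigma(K)$ is then the tabulated knot signature, which accounts for the two distinct values of $\sigma(W)$ that occur inside items (2)--(4).

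The crux is verifying that each $K$ genuinely wears a projective hat of degree $6$. The number $6$ is dictated by the Calabi--Yau mechanism underlying Theorem \ref{rfold}: the double cover of $\mathbb{C}P^2$ branched over a smooth sextic is a K3 surface, so the double branched cover of the degree-$6$ projective cap is a Calabi--Yau cap for $\Sigma_2(K)$. To build the hat I would cap the quasi-positive Bennequin surface -- which lies in $B^4$ with Euler characteristic $n-m$, hence genus $g_4=(1+sl)/2$ -- by a symplectic piece in $\mathbb{C}P^2-\text{Int}(B^4)$ so that the resulting closed symplectic surface has degree $6$. Adjunction then pins its genus to $\binom{5}{2}=10$, so the hat must have genus $10-g_4\geq 6>0$, which is unobstructed for the knots at hand ($g_4\leq 4$). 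I would carry this out uniformly via the projective-hat existence results for quasi-positive links from \cite{eg}, checking their hypotheses against the braid data knot-by-knot.

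Finally I would confirm the two numerical inequalities of Theorem \ref{rfoldcover}, which for $r=2$ reduce to $sl(K)+\sigma(K)\leq 1$ and $sl(K)\leq 14$. The latter is immediate since $sl\leq 7$ throughout, while the former I would verify against the tabulated pairs, noting that it is sharp precisely for the knots whose signature is the less negative of the two listed values in each item. The main obstacle I expect is this hat construction: quasi-positivity and $(n,m)$ are pure bookkeeping from KnotInfo, but establishing a degree-$6$ projective hat for each knot -- in particular those not already treated in \cite{eg} -- is the step that needs real geometric input, and I would expect to spend most of the effort making that construction uniform and confirming it in the borderline cases where the first inequality is tight.
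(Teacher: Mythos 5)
Your overall strategy---specializing Theorem~\ref{rfoldcover} to $r=2$ with a degree-$6$ projective hat, reading $n-m$ and $\sigma(K)$ off the quasi-positive braid data from \cite{lm}, and checking the two inequalities, which for $r=2$ do reduce to $sl(K)+\sigma(K)\le 1$ and $sl(K)\le 14$---is exactly the paper's. Your arithmetic is correct, including the observation that the first inequality is tight precisely for the knots carrying the less negative signature in items (2)--(4), and your determinant argument for $\eta_K(-1)=0$ is a legitimate substitute for the paper's appeal to \cite{go}.

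The genuine gap is the hat construction, which you rightly identify as the crux but do not actually carry out. Your sketch---cap the Bennequin surface by a symplectic piece in $\mathbb{C}P^{2}-\text{Int}(B^4)$ of total degree $6$, and argue via adjunction that the required hat genus $10-g_4$ is positive, hence the capping is ``unobstructed''---is not a proof: a consistent genus count is a necessary condition for such a hat, not a sufficient one, and there is no general existence result in \cite{eg} producing a degree-$6$ projective hat for an arbitrary quasi-positive knot with $g_4\le 4$. The paper closes this gap by an entirely concrete mechanism in Section~3: for each knot it exhibits an explicit relative symplectic cobordism (built from positive band insertions, conjugations, and Markov moves via \cite[Lemma 2.8]{eg}) to a torus knot such as $T_{3,11}$, and then invokes the classification of rational unicuspidal projective curves \cite{flmn} to conclude that those torus knots wear degree-$6$ projective hats, which are then pulled back along the cobordisms. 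These knot-by-knot braid computations (Lemmas~\ref{cobordism1} through \ref{cobordism5}) constitute the bulk of the technical content of the theorem; without them, or some equivalent existence argument for the hats, your proof does not close.
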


\begin{theorem}\label{3foldcover}
Any exact filling $W$ of $\Sigma_{3}(K)$ has $b_1(W)=0$,

(1) $\chi(W)=5$ and $\sigma(W)=-4$ if $K$ is  $3_1$, $5_{2}$, $7_{2}$, $7_{4}$, $8_{21}$, $9_{2}$, $9_{5}$, $9_{35}$, $m(9_{45})$, $m(10_{126})$, $10_{131}$, $m(10_{143})$, $m(10_{148})$, $10_{159}$ or $m(10_{165})$.

(2) $\chi(W)=9$ and $\sigma(W)=-8$ if $K$ is  $5_1$, $7_3$, $7_5$, $8_{15}$, $9_4$, $9_7$, $9_{10}$, $9_{13}$, $9_{18}$, $9_{23}$, $9_{38}$, $9_{49}$, $10_{120}$, $10_{127}$, $10_{149}$ or $m(10_{157})$.

(3) $\chi(W)=13$ and $\sigma(W)=-12$ if $K$ is  $9_6$ or $9_9$.

(4) $\chi(W)=13$ and $\sigma(W)=-8$ or is negative definite if $K$ is  $7_1$, $8_{19}$, $9_3$, $10_{128}$ or $10_{134}$.
\end{theorem}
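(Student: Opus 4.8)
The plan is to derive Theorem~\ref{3foldcover} from Theorem~\ref{rfoldcover} with $r=3$, so for each listed knot $K$ the hypotheses to be verified are that $K$ wears a Hirzebruch hat of bidegree $(3,3)$ and that $\sum_{k=1}^{2}\eta_K(\zeta^k)=0$, where $\zeta=e^{2\pi i/3}$. First I would specialize the $r=3$ data. Since $\zeta^2=\bar\zeta$ and the Levine--Tristram functions satisfy $\sigma_K(\bar\omega)=\sigma_K(\omega)$ and $\eta_K(\bar\omega)=\eta_K(\omega)$, the relevant sums collapse to $\sum_{k=1}^{2}\sigma_K(\zeta^k)=2\sigma_K(\zeta)$ and $\sum_{k=1}^{2}\eta_K(\zeta^k)=2\eta_K(\zeta)$. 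With $sl(K)=m-n$ as in the Remark, the conclusion of Theorem~\ref{rfoldcover} becomes
\[
b_1(W)=0,\qquad \chi(W)=3-2(n-m),\qquad \sigma(W)=2\sigma_K(\zeta),
\]
and its two hypotheses reduce to $(1-n+m)+\sigma_K(\zeta)\le 1$ and $1-n+m\le 7$.

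For each $K$ I would take the quasipositive braid word recorded in Knotinfo \cite{lm}, read off the number of strings $n$ and of positive bands $m$, and compute $1-n+m$ and $\chi(W)=3-2(n-m)$. As these are quasipositive knots, the associated quasipositive surface realizes the slice genus and $m-n=2g_4(K)-1$, so $1-n+m=2g_4(K)\in\{2,4,6\}$ according as $g_4(K)\in\{1,2,3\}$; hence $\chi(W)\in\{5,9,13\}$ and the second hypothesis $1-n+m\le 7$ holds in every case. I would then check $\Delta_K(\zeta)\neq 0$, which forces $\eta_K(\zeta)=0$ and meets the nullity hypothesis, and evaluate $\sigma_K(\zeta)$ from a Seifert matrix (equivalently from the tabulated signature function). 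The values are $\sigma_K(\zeta)=-2$ in case (1), $-4$ in case (2), $-6$ in case (3), and $-4$ in case (4); thus $\sigma(W)=2\sigma_K(\zeta)$ equals $-4,-8,-12$ in cases (1)--(3), while case (4) has $g_4(K)=3$ together with $\sigma_K(\zeta)=-4$.

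The one genuinely geometric step is the hat. For each $K$ I would construct a properly embedded symplectic surface $\hat F$ of bidegree $(3,3)$ in $\mathbb{CP}^1\times\mathbb{CP}^1-\mathrm{Int}(B^4)$ with $\partial\hat F=-K$, following the capping procedure of Etnyre--Golla \cite{eg} applied to the quasipositive braid. Gluing $\hat F$ to the symplectic filling surface $F\subset B^4$ coming from quasipositivity produces a closed bidegree-$(3,3)$ symplectic curve of genus $4$, and an Euler-characteristic count forces $g(\hat F)=4-g_4(K)\ge 0$, consistent exactly because $g_4(K)\le 3$. With the hat in place, the first hypothesis $(1-n+m)+\sigma_K(\zeta)\le 1$ reads $2-2\le 1$, $4-4\le 1$, $6-6\le 1$ in cases (1),(2),(3) and is satisfied, so Theorem~\ref{rfoldcover} applies directly and yields the stated $b_1(W)=0$, $\chi(W)$, and $\sigma(W)$.

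The knots in part (4) are precisely those for which the first hypothesis fails by a single unit: there $1-n+m=6$ and $\sigma_K(\zeta)=-4$, so $(1-n+m)+\sigma_K(\zeta)=2>1$ and Theorem~\ref{rfoldcover} no longer applies verbatim. Here I would argue with the Calabi--Yau cap $C_3$ directly, namely the $3$-fold cyclic branched cover of the Hirzebruch cap over $\hat F$, whose first Chern class is torsion precisely because the bidegree is $(3,3)$. Capping $W$ produces a closed symplectic $4$-manifold $X=W\cup C_3$, and combining the bound of Li--Mak--Yasui \cite{lmy} on exact fillings of a Calabi--Yau cap with the classification of symplectic Calabi--Yau surfaces leaves, for $\chi(W)=13$ and $b_1(W)=0$ (so $b_2(W)=12$), exactly the two possibilities $\sigma(W)=-8$ (the symplectic Calabi--Yau case) and $W$ negative definite with $\sigma(W)=-12$. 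I expect the main obstacle to be the knot-by-knot construction of these bidegree-$(3,3)$ Hirzebruch hats from the braid words, together with justifying in part (4) that only the two listed alternatives survive; once the hats are available, the evaluations of $\sigma_K$ and $\eta_K$ at $\zeta$ and the inequality checks are routine.
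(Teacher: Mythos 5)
Your skeleton matches the paper's: reduce cases (1)--(3) to Theorem~\ref{rfoldcover} with $r=3$ after checking that each knot wears a Hirzebruch hat of bidegree $(3,3)$, and treat case (4) separately because there $2(1-n+m)+\sum_k\sigma_K(\zeta^k)=4>2$, so the Calabi--Yau cap $C$ has $b_2^+(C)=1$ and one can only conclude that $W\cup C$ is a K3 surface when $W$ is not negative definite. Your numerology ($1-n+m=2g_4(K)\in\{2,4,6\}$, $\sigma_K(\zeta)=-2,-4,-6,-4$ in the four cases, the reduction of the two inequalities to $(1-n+m)+\sigma_K(\zeta)\le 1$ and $1-n+m\le 7$) is all correct and agrees with the paper.

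The genuine gap is the existence of the hats, which you defer entirely. There is no general ``capping procedure of Etnyre--Golla applied to the quasipositive braid'' that outputs a bidegree-$(3,3)$ Hirzebruch hat for an arbitrary quasipositive knot of slice genus at most $3$; your Euler-characteristic count $g(\hat F)=4-g_4(K)\ge 0$ is only a necessary condition, not a construction. This is where essentially all of the work in the paper lies: for each knot one exhibits an explicit relative symplectic cobordism (built from positive band insertions, conjugations, and Markov stabilizations/destabilizations, each of which is realized symplectically by \cite[Lemma 2.8]{eg}) from the given quasipositive braid to $T_{3,5}$, or to $T_{2,q}$ with $q\le 8$ which is then cobounded to $T_{3,5}$; the hat for $T_{3,5}$ itself comes from \cite[Proposition 4.10]{eg}. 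Without these knot-by-knot braid computations the theorem is not proved. A secondary, smaller issue: in case (4) you assert that the negative definite alternative forces $\sigma(W)=-12$, which presupposes $b_1(W)=0$ and $\chi(W)=13$; but the Calabi--Yau cap argument only yields those equalities in the branch where $W\cup C$ is a K3 surface, i.e.\ when $W$ is \emph{not} negative definite, so the value $-12$ in the definite case is unjustified (and the paper does not claim it).
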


\begin{theorem}\label{4foldcover}
Any exact filling $W$ of $\Sigma_{4}(K)$ has $b_1(W)=0$,

(1) $\chi(W)=7$ and $\sigma(W)=-6$ if $K$ is  $3_1$, $5_2$, $7_2$, $7_4$, $8_{21}$, $m(9_{45})$,  $m(10_{126})$, $m(10_{143})$, $m(10_{148})$ or $10_{159}$.

(2) $\chi(W)=13$ and $\sigma(W)=-12$ if $K$ is  $7_5$ or $8_{15}$.

(3) $\chi(W)=13$ and $\sigma(W)=-8$ or is negative definite if $K$ is $5_1$ or $7_{3}$.

\end{theorem}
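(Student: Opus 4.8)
The plan is to read Theorem~\ref{4foldcover} off Theorems~\ref{rfold} and~\ref{rfoldcover} in the case $r=4$, using hypothesis~(3): each knot wears a projective hat of degree $4$. Here $\zeta=e^{2\pi i/4}=i$, so by the symmetry $\sigma_K(\bar\omega)=\sigma_K(\omega)$ the relevant invariants collapse to
\[
\sum_{k=1}^{3}\sigma_K(\zeta^k)=2\,\sigma_K(i)+\sigma_K(-1),\qquad
\sum_{k=1}^{3}\eta_K(\zeta^k)=2\,\eta_K(i)+\eta_K(-1).
\]
For each knot I would take a quasi-positive $n$-braid with $m$ bands from KnotInfo, record $n-m=-sl(K)=1-2g_4(K)$, and compute $\sigma_K(i),\sigma_K(-1),\eta_K(i),\eta_K(-1)$ from the Seifert form. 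Every knot on the list has Alexander polynomial nonvanishing at $i$ and at $-1$, so $\sum_k\eta_K(\zeta^k)=0$ and it is Theorem~\ref{rfoldcover} that applies.

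The one nonformal ingredient, and the step I expect to be the main obstacle, is the geometric input that each listed knot wears a projective hat of degree $4$. The underlying principle is that a quasi-positive braid bounds a piece of an algebraic (hence symplectic) curve in $B^4$, and a degree-$4$ projective hat completes this piece to a degree-$4$ symplectic curve in $\mathbb{C}P^2$; by the adjunction genus count the hat then has genus $3-g_4(K)\ge 1$, which is consistent precisely because every knot here has $g_4(K)\le 2$. I would produce the hats by the explicit band-word constructions of Etnyre--Golla in \cite{eg} (which already cover several of these knots), or by exhibiting each braid as the braid at infinity of a smooth quartic. Granting this, the Euler characteristic is immediate and purely topological: $\chi(W)=4-3(n-m)=1+6g_4(K)$, giving $\chi(W)=7$ when $g_4=1$ and $\chi(W)=13$ when $g_4=2$, which is exactly the case division; and Theorem~\ref{rfoldcover} yields $b_1(W)=0$, whence $b_2(W)=\chi(W)-1=6g_4(K)$ using the index-$\le 2$ handle structure of the filling.

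With $b_1(W)=0$ in hand, the second hypothesis of Theorem~\ref{rfoldcover} reads $3(1-n+m)=b_2(W)\le 15$, i.e.\ $g_4(K)\le 2$, and so holds automatically. The first hypothesis $3(1-n+m)+\sum_k\sigma_K(\zeta^k)\le 2$ is more delicate: writing $W_0$ for the standard filling (the $4$-fold cyclic branched cover of $B^4$ along the Bennequin surface), its signature is $\sigma(W_0)=\sum_k\sigma_K(\zeta^k)$ by the $G$-signature theorem, so the left-hand side equals $b_2(W_0)+\sigma(W_0)=2\,b_2^{+}(W_0)$, and the first hypothesis is exactly $b_2^{+}(W_0)=0$. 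One checks that in cases~(1) and~(2) indeed $2\sigma_K(i)+\sigma_K(-1)$ equals $-6$ (for $g_4=1$) or $-12$ (for $g_4=2$), so $b_2^{+}(W_0)=0$ and Theorem~\ref{rfoldcover} pins $\sigma(W)$ to this value; as it coincides with $1-\chi(W)$, such $W$ is in fact negative definite.

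It remains to treat case~(3), the knots $5_1$ and $7_3$, where $2\sigma_K(i)+\sigma_K(-1)=-8\neq-12=1-\chi(W)$, so $b_2^{+}(W_0)=2$ and the first hypothesis fails. Here I would push the Calabi--Yau cap argument one step further. Capping $W$ with the degree-$4$, $4$-fold branched-cover cap $C$ closes up to a symplectic Calabi--Yau surface $X=W\cup C$; concretely $X$ is the $4$-fold cyclic cover of $\mathbb{C}P^2$ branched over a smooth quartic, with the rational homology of a $K3$ surface, so $b_2^{+}(X)=3$. Since $\Sigma_4(K)$ is a rational homology sphere (the Alexander polynomial has no $4$th root of unity among its zeros), the decomposition $X=C\cup W_0$ is additive on $b_2^{+}$, giving $b_2^{+}(C)=b_2^{+}(X)-b_2^{+}(W_0)=3-2=1$. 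For the unknown filling the same additivity yields $b_2^{+}(X)=b_2^{+}(C)+b_2^{+}(W)=1+b_2^{+}(W)$, and the finiteness of symplectic Calabi--Yau homology types \cite{lmy} forces $b_2^{+}(X)\in\{1,3\}$; hence $b_2^{+}(W)\in\{0,2\}$. With $b_1(W)=0$ and $b_2(W)=12$ this gives $\sigma(W)=2\,b_2^{+}(W)-12\in\{-12,-8\}$, i.e.\ $W$ is either negative definite or has $\sigma(W)=-8$, while spinness (the even intersection form inherited from the branched-cover cap) excludes the intermediate value $b_2^{+}(W)=1$. This is the asserted dichotomy. Throughout, the only genuinely hard step is the construction of the degree-$4$ projective hats; the signature bookkeeping and the $b_2^{+}$ dichotomy are then routine.
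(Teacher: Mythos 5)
Your overall strategy matches the paper's: equip each knot with a degree-$4$ projective hat, feed cases (1) and (2) into Theorem~\ref{rfoldcover}, and treat $5_1$ and $7_3$ separately because the first inequality fails there ($3(1-n+m)+\sum_k\sigma_K(i^k)=12-8=4>2$). The numerology in cases (1) and (2) is correct. Two caveats before the main objection. First, you defer the construction of the degree-$4$ hats, which is in fact the bulk of the paper's work (Lemmas~\ref{cobordism2} and~\ref{cobordism3}: explicit positive-band additions, conjugations and Markov moves carrying each braid to $T_{2,7}$ or $T_{3,4}$, which wear degree-$4$ hats by \cite{flmn}); the adjunction count showing a hypothetical hat would have genus $3-g_4(K)\ge 1$ is a consistency check, not an existence proof. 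Second, the first inequality of Theorem~\ref{rfoldcover} is $2b_2^+(W_0)\le 2$, i.e.\ $b_2^+(W_0)\le 1$, not $b_2^+(W_0)=0$; this is harmless here since $b_2^+(W_0)=0$ in cases (1) and (2), but it is not "exactly" what you wrote.

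The genuine gap is in case (3), where you assert that $\chi(W)=13$, hence $b_2(W)=12$, holds for every exact filling $W$ "purely topologically". It does not: the Euler characteristic of an arbitrary exact filling is precisely what these geography theorems must determine, and it is pinned down only by identifying $X=W\cup C$ with a homology K3, which requires $b_2^+(X)\ge 2$, i.e.\ $b_2^+(W)\ge 1$ (since $b_2^+(C)=3-b_2^+(W_0)=1$). On your own dichotomy the alternative $b_2^+(X)=1$ allows $X$ to be a rational homology Enriques surface, in which case $b_2(W)=b_2(X)-b_2(C)=10-10=0$ and $\sigma(W)=0$, not $b_2(W)=12$ and $\sigma(W)=-12$; so the formula $\sigma(W)=2b_2^+(W)-12\in\{-12,-8\}$ fails on the negative definite branch. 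The paper's proof (modeled on case (4) of Theorem~\ref{3foldcover}) avoids this: if $W$ is not negative definite then $b_2^+(X)\ge b_2^+(W)+b_2^+(C)\ge 2$, the Kodaira-dimension-zero argument of \cite{sv} and \cite{eg} applies, $X$ is a homology K3, and only then do $\chi(W)=13$, $b_1(W)=0$ and $\sigma(W)=-8$ follow; if $W$ is negative definite, nothing more is claimed. Your closing sentence happens to land on the correct dichotomy, but the intermediate claims $b_2(W)=12$ and $\sigma(W)=-12$ in the definite case are unjustified and should be removed; the remark that spinness excludes $b_2^+(W)=1$ is also superfluous, since $b_2^+(W)\ne 1$ already follows from $b_2^+(X)\in\{1,3\}$.
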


\begin{remark}
(1) By \cite[Lemma 6.6]{eo}, the Euler classes of all these contact double branched covers are $0$. The double branched cover of $S^3$ over the following knots are lens spaces: $3_1$, $5_1$, $5_2$, $7_1$, $7_2$, $7_3$, $7_4$, $7_5$, $9_1$, $9_2$, $9_3$, $9_4$, $9_5$, $9_6$, $9_7$, $9_9$, $9_{10}$, $9_{13}$, $9_{18}$ and $9_{23}$. In \cite[Page 439]{ef}, the tight contact structure on a lens space with vanishing Euler class is identified with the standard Legendrian surgery diagram in \cite{h} whose all components have rotation number $0$. The symplectic fillings of such contact lens spaces has been classified, cf. \cite{l1, er, cl}. 


(2) The cyclic branched covering $\Sigma_{3}(5_1)$ is the Poincare homology sphere $\Sigma(2,3,5)$. The canonical contact structure on $\Sigma(2,3,5)$ has a Stein filling with intersection form $E_8$, the unique negative definite, even, unimodular form of rank $8$ \cite{s}. 

(3) The cyclic branched covering $\Sigma_{3}(7_1)$ is the Brieskorn sphere $\Sigma(2,3,7)$. According to \cite[5.3.3]{ns}, $\Sigma_{4}(5_1)$ is the Brieskorn sphere $\Sigma(2,4,5)$ that is the Seifert fibered space $M(-1;\frac{1}{2}, \frac{1}{5}, \frac{1}{5})$. The exact fillings of the canonical contact structures on these two Brieskorn spheres have been investigated by Etnyre and Golla in \cite{eg}.   


(4)  By \cite[5.3.3]{ns}, $\Sigma_{3}(3_1)$ is the Brieskorn sphere $\Sigma(2,3,3)$ that is the Seifert fibered space $M(-2;\frac{1}{2}, \frac{1}{2}, \frac{1}{2})$, and both  $\Sigma_{2}(8_{19})$ and $\Sigma_{4}(3_{1})$ are the Brieskorn sphere $\Sigma(2,3,4)$ that is the Seifert fibered space $M(-2;\frac{1}{2}, \frac{2}{3}, \frac{2}{3})$. According to the calssification result in \cite{t}, the canonical contact structure on $\Sigma(2,3,3)$ has a negative definite exact filling with $b_{2}=4$, and the canonical contact structure on $\Sigma(2,3,4)$ has an exact filling with intersection form negative definite $E_6$. Moreover,  $\Sigma_{3}(8_{19})$ is the Brieskorn sphere $\Sigma(3,3,4)$ that is the Seifert fibered space $M(-1;\frac{1}{4}, \frac{1}{4}, \frac{1}{4})$. 
\end{remark}


Similarly, one can determine the geography of the exact fillings of the contact cyclic coverings of $(S^3,\xi_{st})$ branched over quasi-positive knots with more crossings and quasi-positive links with zero nullity. In the following, we consider all quasi-positive transverse links with crossing numbers $\leq 11$ and nonzero nullity. Unlike the previous cases, the double branched covers of these links are not rational homology spheres.

\begin{theorem}\label{2foldcoverlink}
Any exact filling $W$ of $\Sigma_{2}(L)$ is spin, has 

(1) $\chi(W)=0$ and $\sigma(W)=0$ if $L$ is $m(L10n104\{1,0,0\})$ or $m(L10n104\{1,1,0\})$. In fact, $b_{1}(W)=1$ and $b_{2}(W)=0$.

(2) $\chi(W)=1$ and $\sigma(W)=-1$ if $L$ is  $L10n94\{1,0\}$, $m(L11n381\{0,0\})$, $m(L11n381\{0,1\})$ or $L11n428\{1,0\}$. In fact, $b_{1}(W)=1$, $b^{+}_{2}(W)=0$, $b^{-}_{2}(W)=1$ and $b^{0}_{2}(W)=0$.

(3) $\chi(W)=2$ and $\sigma(W)=-2$ if $L$ is  $m(L11n226\{0\})$. In fact, $b_{1}(W)=1$, $b^{+}_{2}(W)=0$, $b^{-}_{2}(W)=2$ and $b^{0}_{2}(W)=0$.

(4) $\chi(W)=2$ and $\sigma(W)=0$ if $L$ is $L11n204\{1\}$.

(5) $\chi(W)=3$ and $\sigma(W)=-1$ if $L$ is $L10n93\{0,1\}$ or $L11n379\{1,1\}$.

(6) $\chi(W)=4$ and $\sigma(W)=-2$ if $L$ is  $L9n18\{1\}$, $m(L10n104\{0,0,0\})$ or $m(L11n205\{0\})$.

(7) $\chi(W)=5$ and $\sigma(W)=-3$ if $L$ is  $m(L8n6\{0,0\})$, $L10n94\{0,0\}$ or $m(L11n433\{0,1\})$.

(8) $\chi(W)=6$ and $\sigma(W)=-4$ if $L$ is  $L8n8\{1,0,1\}$,  $m(L9n19\{0\})$, $L9n19\{1\}$, \\ $m(L10n111\{0,0,1\})$ or $L11n237\{1\}$.

(9) $\chi(W)=7$ and $\sigma(W)=-5$ if $L$ is  $L8n6\{1,0\}$, $L10n91\{1,0\}$, $m(L10n94\{0,1\})$,\\ $m(L11n411\{0,0\})$, $m(L11n432\{0,1\})$ or $L11n437\{1,0\}$.

(10) $\chi(W)=8$ and $\sigma(W)=-6$ if $L$ is  $m(L9n18\{0\})$, $L10n104\{1,0,1\}$, $L11n205\{1\}$, $m(L11n236\{0\})$ or $L11n459\{1,1,1\}$.

(11) $\chi(W)=9$ and $\sigma(W)=-7$ if $L$ is  $m(L10n93\{0,0\})$ or $m(L11n379\{0,1\})$.

(12) $\chi(W)=10$ and $\sigma(W)=-8$ if $L$ is $m(L11n204\{0\})$.

\end{theorem}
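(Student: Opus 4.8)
The plan is to read Theorem~\ref{2foldcoverlink} as the specialization of Theorem~\ref{rfold} to $r=2$, and to verify its hypotheses link by link. For $r=2$ the primitive root is $\zeta=e^{\pi i}=-1$, so the sums over $k$ collapse: $\sum_{k=1}^{r-1}\sigma_L(\zeta^k)=\sigma_L(-1)=\sigma(L)$ is the ordinary signature and $\sum_{k=1}^{r-1}\eta_L(\zeta^k)=\eta_L(-1)=\eta(L)$ is the ordinary nullity, while the predicted invariants read $\chi(W)=2-(n-m)$ and $\sigma(W)=\sigma(L)$. Thus for each link $L$ in the list I would (i) record a quasi-positive braid, reading off its string number $n$ and its number of positive bands $m$ from \cite{lm}; (ii) exhibit a projective hat of degree $6$, so that hypothesis~(1) of Theorem~\ref{rfold} applies; (iii) compute $\sigma(L)$ and $\eta(L)$; and (iv) check the two numerical inequalities $(1-n+m)+2\eta(L)+\sigma(L)\le 3$ and $(1-n+m)+2\eta(L)-\sigma(L)\le 25$. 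Granting (i)--(iv), Theorem~\ref{rfold} immediately gives that $W$ is spin with the stated values of $\chi(W)$ and $\sigma(W)$.

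The geometric heart of the argument is step~(ii): producing a degree-$6$ projective hat for every link on the list. Here I would invoke the construction of Etnyre and Golla \cite{eg} that turns a quasi-positive braid into a properly embedded symplectic surface in $\mathbb{C}P^2-\mathrm{Int}(B^4)$ whose degree is controlled by the braid, and then adjust the braid by Markov moves so that the resulting hat has degree exactly $6$. This is the case relevant to $r=2$ precisely because the double cover of $\mathbb{C}P^2$ branched along a degree-$6$ symplectic surface has trivial canonical class, so that capping $\Sigma_2(L)$ with it produces a Calabi--Yau cap in the sense of \cite{lmy}, which is what drives Theorem~\ref{rfold}. I expect step~(ii) to be the main obstacle, since it is the only genuinely geometric input and must be checked uniformly across all the links; by contrast steps~(iii)--(iv) are finite arithmetic, using that $\sigma(L)$ and $\eta(L)$ are classical invariants one can extract from a Seifert matrix or from the tables in \cite{lm}.

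Finally I would upgrade the conclusion of Theorem~\ref{rfold} to the refined Betti-number statements (the ``in fact'' clauses). The boundary homology is determined by $L$: presenting $H_1(\Sigma_2(L);\mathbb{Z})$ by $V+V^{T}$ for a Seifert matrix $V$ gives $b_1(\Sigma_2(L))=\eta(L)$. Running the long exact sequence of the pair $(W,\partial W)$ with rational coefficients together with Lefschetz duality yields the single relation
\[
b_1(W)=\eta(L)-b_2^0(W)+b_3(W),
\]
where $b_2^0(W)$ is the rank of the radical of the intersection form. Since $W$ is an exact filling, its Liouville skeleton is isotropic of dimension at most $2$, so $W$ is homotopy equivalent to a $2$-complex and $b_3(W)=0$. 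It then remains to show $b_2^0(W)=0$, i.e. that the intersection form of $W$ is nondegenerate; this I would extract from the explicit gluing of $W$ to the Calabi--Yau cap, which computes the image of $H_2(\partial W;\mathbb{Q})\to H_2(W;\mathbb{Q})$. Granting $b_2^0(W)=0$ gives $b_1(W)=\eta(L)$, equal to $1$ in the cases where it is asserted; feeding this into $\chi(W)=1-b_1(W)+b_2(W)$ pins down $b_2(W)$, and the spin condition together with $\sigma(W)=b_2^+(W)-b_2^-(W)$ then splits $b_2(W)$ into $b_2^+(W)$ and $b_2^-(W)$, matching each listed item.
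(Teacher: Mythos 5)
Your overall strategy is exactly the paper's: specialize Theorem~\ref{rfold} to $r=2$ (so the sums collapse to $\sigma(L)$ and $\eta(L)$), verify a degree-$6$ projective hat plus the two inequalities for each link, and then refine to Betti numbers. However, as written the proposal has a genuine gap precisely at the step you yourself flag as the main obstacle: you never actually produce the degree-$6$ hats, and the method you sketch for doing so is not the one that works. There is no general Etnyre--Golla machine that converts a quasi-positive braid into a symplectic surface in $\mathbb{C}P^2-\mathrm{Int}(B^4)$ of a prescribed degree by ``adjusting the braid by Markov moves''; what the paper does (Lemma~\ref{cobordism6}, which is the bulk of Section~3) is build, for each of the roughly thirty links, an explicit relative symplectic cobordism in the symplectization --- by inserting positive bands, conjugating, and Markov (de)stabilizing --- landing on a torus knot $T_{3,q}$ with $q\le 11$, and then invoke the classification of rational unicuspidal plane curves \cite{flmn} to say that $T_{3,11}$ (hence everything cobordant into it) wears a projective hat of degree $6$. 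Without that intermediate passage through a specific torus knot known to bound a degree-$6$ curve, step~(ii) is not established, and the whole theorem rests on it.

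A second, smaller error: your derivation of $b_3(W)=0$ from ``the Liouville skeleton is isotropic, so $W$ is homotopy equivalent to a $2$-complex'' is valid for Stein or Weinstein fillings but not for arbitrary exact (Liouville) fillings, whose skeleta need not be isotropic; since the theorem is stated for exact fillings this step fails. The correct route (implicit in the paper) is homological: Lefschetz duality and the pair sequence give $b_3(W)=\dim\ker\bigl(H^1(W;\mathbb{Q})\to H^1(\partial W;\mathbb{Q})\bigr)\le b_1(W)$, and the Mayer--Vietoris sequence for $X=W\cup C$ being an integral homology K3 bounds $b_1(W)+b_1(C)$ by $b_1(\Sigma_2(L))=\eta(L)$; combining these with the already-computed $\chi(W)$, $\sigma(W)$ and the nondegeneracy forced by the gluing pins down the Betti numbers in cases (1)--(3). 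Note also that the paper only claims $b_1(W)\in\{0,1\}$ in cases (4)--(12), so your uniform claim $b_1(W)=\eta(L)$ overreaches what the argument actually yields there. The arithmetic specialization in your first paragraph ($\chi(W)=2-(n-m)$, $\sigma(W)=\sigma(L)$, and the two inequalities) is correct.
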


\begin{remark}
In the cases (4)-(12), the first Betti number $b_{1}(W)$ is either $0$ or $1$.
\end{remark}

\begin{theorem}\label{3foldcoverlink}
Any exact filling $W$ of $\Sigma_{3}(L)$ is spin, has 

(1)  $\chi(W)=-1$ and $\sigma(W)=0$ if $L$ is  $m(L10n104\{1,0,0\})$ or $m(L10n104\{1,1,0\})$.  In fact, $b_{1}(W)=2$ and $b_{2}(W)=0$.

(2)  $\chi(W)=1$ and $\sigma(W)=-2$ if $L$ is  $m(L11n381\{0,0\})$,  $m(L11n381\{0,1\})$ or $L11n428\{1,0\}$. In fact, $b_{1}(W)=2$, $b^{+}_{2}(W)=0$, $b^{-}_{2}(W)=2$ and $b^{0}_{2}(W)=0$.

(3)  $\chi(W)=1$ and $\sigma(W)=0$ if $L$ is $L10n94\{1,0\}$.  In fact, $b_{1}(W)=0$ and $b_{2}(W)=0$.

(4)  $\chi(W)=3$ and $\sigma(W)=-2$ if $L$ is $m(L11n226\{0\})$.  In fact, $b_{1}(W)=0$, $b^{+}_{2}(W)=0$, $b^{-}_{2}(W)=2$ and $b^{0}_{2}(W)=0$.
\end{theorem}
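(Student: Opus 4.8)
The plan is to obtain every case as a direct application of Theorems \ref{rfold} and \ref{rfoldcover} with $r=3$, for which the relevant cap is the Hirzebruch hat of bidegree $(3,3)$. First, for each link $L$ in the list I would fix the quasi-positive braid representative recorded in LinkInfo, read off the number of strings $n$ and the number of positive bands $m$, and verify that $L$ wears a Hirzebruch hat of bidegree $(3,3)$ using the construction for quasi-positive braids established earlier in the paper. Next I would compute the Levine--Tristram data at the primitive third root of unity $\zeta=e^{2\pi i/3}$ from a Seifert matrix of $L$ (or from the tabulated signature and nullity functions); since $\zeta^{2}=\overline{\zeta}$ and both functions are invariant under complex conjugation, one has $\sum_{k=1}^{2}\sigma_{L}(\zeta^{k})=2\sigma_{L}(\zeta)$ and $\sum_{k=1}^{2}\eta_{L}(\zeta^{k})=2\eta_{L}(\zeta)$. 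This makes the target values transparent: for $r=3$ the formulas read $\chi(W)=3-2(n-m)$ and $\sigma(W)=2\sigma_{L}(\zeta)$.

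The natural organizing principle is to split the list according to whether $\eta_{L}(\zeta)$ vanishes. When $\eta_{L}(\zeta)=0$ the hypothesis $\sum_{k=1}^{2}\eta_{L}(\zeta^{k})=0$ of Theorem \ref{rfoldcover} holds, and after checking its two inequalities $2(1-n+m)+2\sigma_{L}(\zeta)\le 2$ and $2(1-n+m)\le 15$ I would read off directly that $W$ is spin with $b_{1}(W)=0$, $\chi(W)=3-2(n-m)$ and $\sigma(W)=2\sigma_{L}(\zeta)$; this covers the cases with $b_{1}(W)=0$ (such as $L10n94\{1,0\}$ and $m(L11n226\{0\})$). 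When $\eta_{L}(\zeta)\neq 0$ I would instead check the inequalities of Theorem \ref{rfold}, namely $2(1-n+m)+4\eta_{L}(\zeta)+2\sigma_{L}(\zeta)\le 3$ and $2(1-n+m)+4\eta_{L}(\zeta)-2\sigma_{L}(\zeta)\le 25$, to obtain spinness together with the same $\chi(W)$ and $\sigma(W)$. In both regimes the refined intersection-form data follows from $b_{2}^{+}(W)=0$ — the definiteness forced by the Calabi--Yau cap underlying the proofs of Theorems \ref{rfold} and \ref{rfoldcover} — which gives $b_{2}^{-}(W)=-\sigma(W)=-2\sigma_{L}(\zeta)$. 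To pin down $b_{2}^{0}(W)$ and $b_{1}(W)$ in the nonvanishing-nullity cases I would use the topology of the boundary: the first Betti number of the branched cover is governed by $\sum_{k=1}^{2}\eta_{L}(\zeta^{k})$, the half-lives-half-dies principle controls the radical of the intersection form, and the long exact sequence of the pair $(W,\partial W)$ together with Lefschetz duality and the already determined $\chi(W)$ then determines $b_{1}(W)$, yielding the listed values.

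The main obstacle I anticipate is twofold. First, exhibiting a Hirzebruch hat of bidegree $(3,3)$ for each of these genuinely multi-component links: unlike the knot case one must follow several boundary components through the band construction and confirm the bidegree, and this is where most of the geometric input lies. Second, because the nullity is nonzero the cover $\Sigma_{3}(L)$ may fail to be a rational homology sphere, so the passage from $\chi(W)$ and $\sigma(W)$ to the individual $b_{1}(W)$, $b_{2}^{+}(W)$, $b_{2}^{-}(W)$, $b_{2}^{0}(W)$ requires the more delicate homological bookkeeping of the previous paragraph rather than the routine argument available for $\mathbb{Q}$-homology-sphere boundaries; computing the signature and nullity functions at $\zeta=e^{2\pi i/3}$ uniformly across the whole table, and verifying the two inequalities in each case, constitutes the bulk of the remaining, essentially computational, work.
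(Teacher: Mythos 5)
Your proposal matches the paper's proof in all essentials: Lemma~\ref{cobordism6} supplies the bidegree-$(3,3)$ Hirzebruch hats, the tabulated values of $n-m$, $\sigma_{L}(e^{2\pi i/3})$ and $\eta_{L}(e^{2\pi i/3})$ agree with the paper's, and the two inequalities are verified case by case before reading off $\chi(W)$ and $\sigma(W)$; the only cosmetic difference is that the paper applies Theorem~\ref{rfold} uniformly to all four cases rather than switching to Theorem~\ref{rfoldcover} when the nullity vanishes. Your closing discussion of how the individual Betti numbers are extracted is in fact more explicit than the paper's one-sentence assertion that they are ``uniquely determined by the Euler characteristic and signatures.''
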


\begin{theorem}\label{4foldcoverlink}
Any exact filling $W$ of $\Sigma_{4}(L)$ is spin, has 

(1)  $\chi(W)=-2$ and $\sigma(W)=0$ if $L$ is  $m(L10n104\{1,0,0\})$ or $m(L10n104\{1,1,0\})$.  In fact, $b_{1}(W)=3$ and $b_{2}(W)=0$.

(2)  $\chi(W)=1$ and $\sigma(W)=-3$ if $L$ is  $m(L11n381\{0,0\})$, $m(L11n381\{0,1\})$ or $L11n428\{1,0\}$. In fact, $b_{1}(W)=3$, $b^{+}_{2}(W)=0$, $b^{-}_{2}(W)=3$ and $b^{0}_{2}(W)=0$.

(3)  $\chi(W)=1$ and $\sigma(W)=-1$ if $L$ is $L10n94\{1,0\}$. In fact, $b_{1}(W)=1$, $b^{+}_{2}(W)=0$, $b^{-}_{2}(W)=1$ and $b^{0}_{2}(W)=0$.

(4)  $\chi(W)=4$ and $\sigma(W)=-4$ if $L$ is  $m(L11n226\{0\})$.  In fact, $b_{1}(W)=1$, $b^{+}_{2}(W)=0$, $b^{-}_{2}(W)=4$ and $b^{0}_{2}(W)=0$.
\end{theorem}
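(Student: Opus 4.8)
The proof is an application of Theorem \ref{rfold} in case (3), with $r=4$, to each of the listed links, followed by an upgrade to the full homological type of $W$. Since here $\zeta=i$, the symmetry $\sigma_L(\bar{\zeta})=\sigma_L(\zeta)$ and $\eta_L(\bar{\zeta})=\eta_L(\zeta)$ gives $\sum_{k=1}^{3}\sigma_L(\zeta^k)=2\sigma_L(i)+\sigma_L(-1)$ and $\sum_{k=1}^{3}\eta_L(\zeta^k)=2\eta_L(i)+\eta_L(-1)$. The plan is: first, for each $L$ read off the braid parameters $n$ (number of strings) and $m$ (number of positive bands) from its quasi-positive braid word, which already fixes $\chi(W)=4-3(n-m)$; second, exhibit a projective hat of degree $4$ for $L$, produced exactly as in the construction underlying case (3) of Theorem \ref{rfold} (the quasi-positive braid bounds a piece of an algebraic curve, which caps off inside $\mathbb{C}P^2-\mathrm{Int}(B^4)$ to a projective hat whose degree is arranged to equal $4$); third, compute $\sigma_L(i),\sigma_L(-1),\eta_L(i),\eta_L(-1)$ from a Seifert matrix; and fourth, verify the two numerical inequalities and invoke Theorem \ref{rfold}.

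With $r=4$ the two hypotheses of Theorem \ref{rfold} read $3(1-n+m)+2\sum_{k}\eta_L(\zeta^k)+\sum_{k}\sigma_L(\zeta^k)\le 3$ and $3(1-n+m)+2\sum_{k}\eta_L(\zeta^k)-\sum_{k}\sigma_L(\zeta^k)\le 25$, which for each $L$ are finite arithmetic checks in the data assembled above. Granting these, Theorem \ref{rfold} immediately gives that $W$ is spin, with $\chi(W)=4-3(n-m)$ and $\sigma(W)=2\sigma_L(i)+\sigma_L(-1)$; a case-by-case comparison confirms that these reproduce the values $(\chi,\sigma)=(-2,0),(1,-3),(1,-1),(4,-4)$ asserted in (1)--(4).

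The substantive part is the refinement ``in fact $b_1(W)=\ldots$, $b_2^{+}(W)=0$, $b_2^{-}(W)=-\sigma(W)$, $b_2^{0}(W)=0$,'' which Theorem \ref{rfold} by itself does not deliver. For this I would pass to the Calabi--Yau cap of case (3): the $4$-fold cyclic cover of $\mathbb{C}P^2-\mathrm{Int}(B^4)$ branched over the closed-up degree-$4$ hat glues onto $W$ to give a closed spin $4$-manifold $X=W\cup_{\Sigma_4(L)}C$ which is a $K3$ surface, so $b_1(X)=0$, $b_2^{+}(X)=3$, $b_2^{-}(X)=19$. Two points must be extracted. First, \emph{definiteness}: the Calabi--Yau cap argument of Li--Mak--Yasui \cite{lmy} that drives Theorem \ref{rfold}, together with the numerical inequalities now in hand, forces $b_2^{+}(W)=0$; moreover $W$ has the homotopy type of a $2$-complex, so $b_3(W)=b_4(W)=0$. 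Second, \emph{vanishing of $b_2^{0}$}: since $L$ has nonzero nullity, $\Sigma_4(L)$ is not a rational homology sphere ($b_1>0$), so the radical of the intersection form of $W$ -- namely the image of $H_2(\Sigma_4(L);\mathbb{R})\to H_2(W;\mathbb{R})$ -- could a priori be nonzero, and one must show it is trivial. I would do this by computing $H_*(\Sigma_4(L))$ from the Levine--Tristram nullities and then running Mayer--Vietoris for $X=W\cup C$, tracking which boundary $2$-classes survive into $W$. Once $b_2^{+}(W)=b_2^{0}(W)=0$, the identities $b_2^{-}(W)=-\sigma(W)$ and $b_1(W)=1-\sigma(W)-\chi(W)$ follow formally from $\chi$ and $\sigma$ and reproduce the four cases (in particular $b_1(W)=3$ in (1)--(2) and $b_1(W)=1$ in (3)--(4)). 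The main obstacle is exactly this last step: unlike the rational-homology-sphere situation of Theorems \ref{2foldcover}--\ref{4foldcover}, the nonzero $b_1$ of the boundary produces candidate degenerate $2$-classes, and the heart of the argument is to verify, via the $K3$ gluing and the branched-cover homology, that none of them reach $H_2(W)$, which is what simultaneously pins down $b_1(W)$ and establishes $b_2^{0}(W)=0$.
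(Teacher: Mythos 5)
Your top-level strategy coincides with the paper's: verify the two inequalities of Theorem~\ref{rfold} with $r=4$ from the braid data and the Levine--Tristram invariants at $i,-1,-i$, apply that theorem to get spinness, $\chi(W)$ and $\sigma(W)$, and then upgrade to the individual Betti numbers. However, there are two genuine gaps.

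First, the hypothesis that each of these links wears a projective hat of degree $4$ cannot be obtained by letting the Rudolph piece of algebraic curve ``cap off inside $\mathbb{C}P^{2}-\mathrm{Int}(B^4)$ to a projective hat whose degree is arranged to equal $4$.'' The degree of the hat is an input to Theorem~\ref{rfold}, not an output of its proof, and for a given quasi-positive link a degree-$4$ hat need not exist at all (its existence constrains the link strongly). The paper establishes this hypothesis in Lemma~\ref{cobordism6}: for each of the seven links it exhibits an explicit chain of positive band insertions, conjugations and Markov (de)stabilizations carrying its quasi-positive braid word to $T_{3,4}$ or to $T_{2,q}$ with $q\leq 7$; by \cite[Lemma 2.8]{eg} these moves produce a relative symplectic cobordism, and by \cite{flmn} together with Lemma~\ref{cobordism1} those torus links carry degree-$4$ projective hats, which are then transported back along the cobordism. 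Without this braid-theoretic work the hypothesis of Theorem~\ref{rfold} is unverified, and this is the bulk of the actual proof.

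Second, your treatment of the refinement to $b_1(W)$, $b_2^{\pm}(W)$, $b_2^{0}(W)$ stops exactly where it needs to start: you identify the possible nonzero radical coming from $b_1(\partial W)>0$ as ``the heart of the argument'' but only sketch a Mayer--Vietoris analysis you do not carry out. Moreover, your intermediate claim that the Calabi--Yau cap argument ``forces $b_2^{+}(W)=0$'' is not justified: the proof of Theorem~\ref{rfold} only yields $b_2^{+}(C)\geq 2$, hence $b_2^{+}(W)\leq b_2^{+}(K3)-b_2^{+}(C)\leq 1$. What closes the gap (and is what the paper means by the Betti numbers being ``uniquely determined'') is a short counting argument: since $W\cup C$ is an integer homology K3, $b_3(W)=b_4(W)=0$ and $b_1(W)\leq b_1(\Sigma_4(L))=\sum_{k=1}^{3}\eta_L(i^k)$; combining this with the trivial bound $b_2(W)\geq -\sigma(W)$ and the identity $\chi(W)=1-b_1(W)+b_2(W)$ forces, in each of the four cases, $b_1(W)=\sum_{k=1}^{3}\eta_L(i^k)$ and $b_2(W)=-\sigma(W)$, whence $b_2^{+}(W)=b_2^{0}(W)=0$ and $b_2^{-}(W)=-\sigma(W)$ follow formally. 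No tracking of boundary $2$-classes is required. (Note also that for case (3) the stated conclusion $b_1(W)=1$ is only consistent with $\sum_{k=1}^{3}\eta_L(i^k)=1$, not $0$; the nullity sum must be recomputed with care there.)
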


In the proof of above theorems, we always construct a relative symplectic cobordism from a transverse link to some transverse torus knot. A transverse link $L_{-}$ is relatively symplectic cobordant to a transverse link $L_{+}$ in $(S^3, \xi_{st}=\ker\alpha_{st})$ if there exists a properly embedded symplectic surface $\Sigma$ in $(S^3\times [0,1], d(e^{t}\alpha_{st}))$ such that $L_{-}\subset S^3\times \{0\}$, $L_{+}\subset S^3\times \{1\}$ and $\partial \Sigma=-L_{-}\sqcup L_{+}$. See \cite{eg}.  It is natural to ask the following question.

\begin{question}
Given a transverse knot $K$ in $(S^3, \xi_{st})$. Do there exist infinitely many transverse quasi-positive knots (or nonsplittable links) which are all relatively symplectic cobordant to $K$?
\end{question}

\medskip
\begin{acknowledgements}
The authors would like to thank Ying Chen, Anthony Conway, John Etnyre and Langte Ma for helpful correspondence. The authors were partially supported by Grant No. 11871332 of the National Natural Science Foundation of China.
\end{acknowledgements}

\section{Exact symplectic fillings}

At first, we recall the definition and properties of the Levine-Tristram signature and nullity. Let $L$ be an oriented link, let $F$ be a Seifert surface for $L$ with $b_{0}(F)$
components and let $A$ be a matrix representing the Seifert pairing of $F$. Given $\omega\in 
S^1$, the Levine-Tristram signature and nullity of $L$ at $\omega$ are defined in \cite{l, t1} as
$$\sigma_{L}(\omega):=\text{signature}((1-\omega)A + (1-\overline{\omega})A^{T}),$$
$$\eta_{L}(\omega):=\text{nullity}((1-\omega)A + (1-\overline{\omega})A^{T})+b_{0}(F)-1.$$
We list some well-known properties of $\sigma_{L}(\omega)$ and $\eta_{L}(\omega)$ as in the following lemma.

\begin{lemma} \label{sig0}\cite{bbg, c}
(1) $\sigma_{L}(-1)$ is the Murasugi signature $\sigma(L)$, and $\eta_{L}(-1)$ is the classical nullity $\eta(L)$.

(2) $\sigma_{L}(\omega)=\sigma_{L}(\bar{\omega})$ and $\eta_{L}(\omega)=\eta_{L}(\bar{\omega})$ for all $\omega\in S^{1}$.

(3) $\sigma_{m(L)}(\omega)=-\sigma_{L}(\omega)$ and $\eta_{m(L)}(\omega)=\eta_{L}(\omega)$, where $m(L)$ is the mirror image of $L$.

(4) If $K'$ is a satellite knot with companion knot $K$, pattern $P$ and wrapping number $q$, then $\sigma_{K'}(\omega)=\sigma_{P}(\omega)+\sigma_{K}(\omega^q)$.

(5) The first Betti number $b_{1}(\Sigma_{r}(L))=\sum\limits_{k=1}^{r-1}\eta_{L}(\zeta^{k}),$ where $\zeta$ is an $r$-th primitive root of unity. 
\end{lemma}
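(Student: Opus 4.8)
The statement collects five well-known facts about the Levine-Tristram signature and nullity, so I would treat each clause separately, leaning on the Seifert-matrix definitions given just above and citing \cite{bbg, c} for the background machinery.

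For (1), I would specialize $\omega = -1$ in the defining formulas. Then $(1-\omega)A + (1-\overline{\omega})A^T = 2(A + A^T)$, and since $A + A^T$ is a symmetrized Seifert form, its signature is by definition the Murasugi signature $\sigma(L)$; multiplying by the positive scalar $2$ does not change the signature. The same substitution shows $\eta_L(-1) = \text{nullity}(2(A+A^T)) + b_0(F) - 1$, which matches the classical nullity $\eta(L)$. For (2), I would observe that replacing $\omega$ by $\overline{\omega}$ interchanges the two summands $(1-\omega)A$ and $(1-\overline{\omega})A^T$; the resulting Hermitian matrix is the conjugate transpose of the original, so it has the same signature and the same nullity, giving the symmetry in both functions. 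Clause (3) follows because a Seifert matrix for the mirror $m(L)$ may be taken to be $-A^T$ (reflecting reverses the sign of the linking form): substituting $-A^T$ into the definition flips the sign of the Hermitian form, hence negates the signature while leaving the nullity and the $b_0$ correction term untouched.

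Clause (4) is the satellite formula, and I expect this to be the main obstacle, since it is not an immediate consequence of the matrix definitions. The standard route is to build a Seifert surface for the satellite $K'$ from a Seifert surface $P'$ for the pattern inside the solid torus together with $q$ parallel copies of a Seifert surface for the companion $K$, where $q$ is the wrapping number. The corresponding Seifert form then decomposes, up to algebraic concordance / congruence, as a block sum whose two pieces compute $\sigma_P(\omega)$ and $\sigma_K(\omega^q)$ respectively; the exponent $q$ arises because the $q$ parallel strands interact with the companion through its $q$-fold behavior, which on the level of the Alexander/signature function corresponds to reparametrizing $\omega \mapsto \omega^q$. I would cite the satellite formula from \cite{c} (or Litherland's classical result) rather than reprove the homological splitting in detail.

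Finally, for (5) I would identify $b_1(\Sigma_r(L))$ with the total nullity of the branched cover. The first homology of the $r$-fold cyclic branched cover decomposes according to the characters $\zeta^k$ of the deck group $\mathbb{Z}/r$, and each eigenspace contributes a rank equal to the Levine-Tristram nullity $\eta_L(\zeta^k)$; summing over the nontrivial characters $k = 1, \dots, r-1$ gives the stated formula. This is a standard computation via the Milnor sequence or the equivariant signature theorem, so I would again defer to \cite{bbg} for the precise statement and simply record the specialization.
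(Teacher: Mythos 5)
Your proposal is correct: parts (1)--(3) follow directly from the Seifert-matrix definition exactly as you compute (at $\omega=-1$ the form becomes $2(A+A^T)$; substituting $\bar\omega$ yields the transpose of a Hermitian matrix, hence the same signature and nullity; and the mirror replaces $A$ by $-A^T$, negating the form), while (4) (Litherland's satellite formula) and (5) (the $\zeta^k$-eigenspace decomposition of $H_1$ of the cyclic branched cover) are appropriately deferred to \cite{c} and \cite{bbg}. The paper itself offers no proof of this lemma---it is stated as a list of well-known facts with those same citations---so your treatment is consistent with, and strictly more detailed than, what the paper does.
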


One can find more details about these two invariants in \cite{bbg, c}. Anthony Conway has the following observation.

\begin{lemma}\label{sig1}\cite{co}
Suppose that an oriented link in $S^3$ bounds a smooth, properly embedded, oriented compact surface $\check{F}$ in $B^4$. Let $\Sigma_{r}(\check{F})$ be the $r$-fold cyclic cover of $B^4$ branched over $\check{F}$. Then the signature of $\Sigma_{r}(\check{F})$ is $\sum\limits_{k=1}^{r-1}\sigma_{L}(\zeta^{k})$.
\end{lemma}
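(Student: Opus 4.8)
The plan is to reduce the statement to two independent facts: that $\sigma(\Sigma_r(\check F))$ depends only on the boundary link $L$ and not on the particular surface $\check F$, and that for one convenient choice of $\check F$ it equals $\sum_{k=1}^{r-1}\sigma_L(\zeta^k)$. For the independence, I would take two surfaces $\check F_0,\check F_1\subset B^4$ with $\partial\check F_i=L$ and glue two copies of $B^4$ along $S^3$ to form $S^4=B^4\cup_{S^3}(-B^4)$, so that $\hat F:=\check F_0\cup_L(-\check F_1)$ is a closed oriented surface in $S^4$. The $r$-fold cyclic branched covers are all defined by the total linking number homomorphism to $\mathbb Z/r$ (which agrees across $S^3$), so $\Sigma_r(B^4,\check F_0)$ and $\Sigma_r(-B^4,-\check F_1)$ glue along their common boundary $\Sigma_r(S^3,L)$ to give the closed branched cover $\Sigma_r(S^4,\hat F)$. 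By Novikov additivity, $\sigma(\Sigma_r(S^4,\hat F))=\sigma(\Sigma_r(\check F_0))-\sigma(\Sigma_r(\check F_1))$.

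The closed cover has vanishing signature: since $H_2(S^4)=0$ we have $[\hat F]^2=0$, and $\sigma(S^4)=0$, so the signature formula for cyclic branched covers of closed $4$-manifolds, $\sigma(\Sigma_r(S^4,\hat F))=r\,\sigma(S^4)-\tfrac{r^2-1}{3r}[\hat F]^2$, gives $\sigma(\Sigma_r(S^4,\hat F))=0$ (the class $[\hat F]=0$ is divisible by $r$, so the formula applies). Hence $\sigma(\Sigma_r(\check F_0))=\sigma(\Sigma_r(\check F_1))$, and the invariant depends only on $L$.

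For the computation I would take $\check F$ to be a Seifert surface $F$ for $L$ with its interior pushed into $B^4$, and use the standard model of its $r$-fold cyclic branched cover obtained by cyclically gluing $r$ copies of $B^4$ cut along $F$. In this model the deck action splits $H_2(\Sigma_r(\check F);\mathbb C)$ into eigenspaces for $\zeta^k$, $k=0,\dots,r-1$, where the intersection form on the $\zeta^k$-eigenspace is represented by $(1-\zeta^k)A+(1-\overline{\zeta^k})A^T$ for a Seifert matrix $A$ of $F$, while the $k=0$ summand (pulled back from the base) carries the trivial form. Summing eigenspace signatures yields $\sum_{k=1}^{r-1}\operatorname{signature}\!\big((1-\zeta^k)A+(1-\overline{\zeta^k})A^T\big)=\sum_{k=1}^{r-1}\sigma_L(\zeta^k)$, precisely the definition recalled above, the $k=0$ term contributing $\sigma_L(1)=0$. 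Together with the independence step, this proves the lemma.

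The main obstacle will be the explicit identification in the computation step of the intersection form on $\Sigma_r(\check F)$ with the block form built from $A$ and $A^T$: this requires carefully tracking the Mayer--Vietoris (transfer) description of $H_2$ of the cut-and-paste model and diagonalizing it equivariantly, and is where the bulk of the classical calculation, in the spirit of Kauffman--Taylor, lives. A secondary point to check is that the branched covers in the independence step genuinely glue along $\Sigma_r(S^3,L)$, which holds because both meridional homomorphisms to $\mathbb Z/r$ are the total linking number map.
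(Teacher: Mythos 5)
Your argument is correct, but it takes a genuinely different route from the paper. The paper's proof is a two-step reduction: decompose $H_2(\Sigma_r(\check F);\mathbb{C})$ into eigenspaces of the deck transformation, quote Viro's theorem \cite{v} to identify the signature of the $\zeta^k$-eigenspace with $\sigma_L(\zeta^k)$ for $k>0$ \emph{directly for the arbitrary surface} $\check F$, and dispose of the invariant summand $V_0$ by the argument in \cite[Page 333]{k2}. You instead prove the surface-independence yourself: you close up two candidate surfaces into $\hat F\subset S^4$, glue the branched covers along $\Sigma_r(S^3,L)$ (correctly noting that both restrictions of the total linking number homomorphism agree), apply Novikov additivity, and kill the closed cover's signature with the formula $\sigma(\Sigma_r(S^4,\hat F))=r\sigma(S^4)-\tfrac{r^2-1}{3r}[\hat F]^2=0$; you then compute the answer for the pushed-in Seifert surface via the classical eigenspace form $(1-\zeta^k)A+(1-\overline{\zeta^k})A^T$. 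In effect you are reproving the signature statement of Viro's theorem rather than citing it. What the paper's route buys is brevity — one citation covers every properly embedded surface at once. What your route buys is self-containedness and an explicit explanation of \emph{why} the signature does not depend on $\check F$, at the price of invoking the Hirzebruch--Rokhlin-type signature formula for closed branched covers and of carrying out (or at least citing, e.g.\ Kauffman--Taylor) the Seifert-matrix computation whose details you correctly flag as the main remaining work. Both conventions match the paper's definition of $\sigma_L$, and I see no gap in your outline.
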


\begin{proof}
The $\mathbb{C}$-vector space $V=H_2(\Sigma_{r}(\check{F}),\mathbb{C})$ is acted upon by the covering transformation $\tau$. Since $\tau^r=1$, we can decompose $V$ as $V=V_0\oplus V_1\oplus \cdots \oplus V_{r-1}$, where $V_k$ denotes the $\zeta^k$-eigenspace $\{x\in V \mid \tau x= \zeta^{k}x\}$. So the signature of $V$ decomposes as a sum of the signature of $V_k$. It follows from Viro's theorem \cite{v} that the signature of $V_k$ equals $\sigma_L(\zeta^k)$ for $k>0$. By the same argument as in \cite[Page 333]{k2}, the signature of $V_0$ equals the signature of $B^4$ which vanishes. 
\end{proof}

Now we prove the results.

\begin{proof}[Proof of Theorem~\ref{rfold}]
According to the main result in \cite{r}, there is a complex analytic curve, and hence a symplectic surface, $\check{F}$ in $B^4$ whose boundary is $L$. By \cite[Corollary 4.83]{r1}, $\chi(\check{F})=n-m$ if $L$ is the closure of an $n$-string braid $\prod\limits_{i=1}^{m} w_{i}\sigma_{j_i}w^{-1}_{i}$.

By \cite[Theorem 3]{lp}, the $r$-fold cyclic cover of $B^4$ branched over $\check{F}$, $\Sigma_{r}(\check{F})$, is a Stein filling of $\Sigma_{r}(L)$. It is well-known that $b_{1}(\Sigma_{r}(\check{F}))\leq b_{1}(\Sigma_{r}(L))$. By Lemma~\ref{sig0}, 
$b_{1}(\Sigma_{r}(L))=\sum\limits_{k=1}^{r-1}\eta_{L}(\zeta^{k})$. So $b_{1}(\Sigma_{r}(\check{F}))\leq \sum\limits_{k=1}^{r-1}\eta_{L}(\zeta^{k})$. Moreover, $b_{3}(\Sigma_{r}(\check{F}))=0$. The Euler characteristic
$$\chi(\Sigma_{r}(\check{F}))=r\chi(B^4)-(r-1)\chi(\check{F})=r-(r-1)(n-m).$$ 
So $$b_{2}(\Sigma_{r}(\check{F}))-b_{1}(\Sigma_{r}(\check{F}))=(r-1)(1-n+m),$$
and hence
$$b_{2}(\Sigma_{r}(\check{F}))\leq \sum\limits_{k=1}^{r-1}\eta_{L}(\zeta^{k})+(r-1)(1-n+m).$$


Let $\hat{F}$ be the symplectic hat of $L$ in $\mathbb{C}P^{2}-\text{Int}(B^4)$ of degree 6 when $r=2$, in $\mathbb{C}P^{1}\times \mathbb{C}P^{1}-\text{Int}(B^4)$ of bidegree $(3,3)$ when $r=3$ or in  $\mathbb{C}P^{2}-\text{Int}(B^4)$ of degree 4 when $r=4$. Then $\check{F}\cup \hat{F}$ is a symplectic surface in $\mathbb{C}P^{2}$ of degree 6 when $r=2$, in $\mathbb{C}P^{1}\times \mathbb{C}P^{1}$ of bidegree $(3,3)$ when $r=3$ or in  $\mathbb{C}P^{2}$ of degree 4 when $r=4$.  According to \cite[Section 6.3]{eg},  the $r$-fold cyclic cover of $\mathbb{C}P^{2}$ (or of $\mathbb{C}P^{1}\times \mathbb{C}P^{1}$ when $r=3$) branched over $\check{F}\cup \hat{F}$ is the K3 surface. Therefore $C=K3-\text{Int}(\Sigma_{r}(\check{F}))$ is a Calabi-Yau cap of $\Sigma_{r}(L)$.

Consider the Mayer-Vietoris exact sequence
$$0\rightarrow H_{2}(\Sigma_{r}(L);\mathbb{Q})\rightarrow  H_{2}(\Sigma_{r}(\check{F});\mathbb{Q})\oplus H_{2}(C;\mathbb{Q}) \rightarrow H_{2}(K3;\mathbb{Q})$$
$$\rightarrow H_{1}(\Sigma_{r}(L);\mathbb{Q})\rightarrow H_{1}(\Sigma_{r}(\check{F});\mathbb{Q})\oplus H_{1}(C;\mathbb{Q})\rightarrow H_{1}(K3;\mathbb{Q})\rightarrow 0.$$

The intersection form of the K3 surface is $2E_8\oplus3H$, where $H$ is the hyperbolic quadratic form. So $H_{2}(K3;\mathbb{Q})=\mathbb{Q}^{22}$ and $H_{1}(K3;\mathbb{Q})=0$.

Immediately, we have $$b_{1}(C)\leq b_{1}(\Sigma_{r}(L))=\sum\limits_{k=1}^{r-1}\eta_{L}(\zeta^{k}).$$ By the exactness of the sequence, $$b_{2}(\Sigma_{r}(\check{F}))+b_{2}(C)-b_{2}(\Sigma_{r}(L))+b_{1}(\Sigma_{r}(L))-b_{1}(\Sigma_{r}(\check{F}))-b_{1}(C)=22.$$  Since $b_{2}(\Sigma_{r}(L))=b_{1}(\Sigma_{r}(L))$, we have $$b_{2}(C)\geq 22-b_{2}(\Sigma_{r}(\check{F}))\geq 22-(r-1)(1-n+m)-\sum\limits_{k=1}^{r-1}\eta_{L}(\zeta^{k}).$$

By \cite[P180]{lmy}, $b_{2}^{0}(C)\leq b_{1}(\Sigma_{r}(L))= \sum\limits_{k=1}^{r-1}\eta_{L}(\zeta^{k})$. So $$b^{+}_{2}(C)+b^{-}_{2}(C)=b_{2}(C)-b_{2}^{0}(C)\geq 22-(r-1)(1-n+m)-2\sum\limits_{k=1}^{r-1}\eta_{L}(\zeta^{k}).$$

On the other hand, by the Novikov Additivity Theorem \cite[Theorem 5.3]{k} and Lemma~\ref{sig1}, the signature of $C$ $$\sigma(C)=\sigma(K3)-\sigma(\Sigma_{r}(\check{F}))=-16-\sum\limits_{k=1}^{r-1}\sigma_{L}(\zeta^{k}).$$

Therefore, $$b^{+}_{2}(C)\geq \frac{1}{2}(6-(r-1)(1-n+m)-2\sum\limits_{k=1}^{r-1}\eta_{L}(\zeta^{k})-\sum\limits_{k=1}^{r-1}\sigma_{L}(\zeta^{k})),$$ and
$$b^{-}_{2}(C)\geq \frac{1}{2}(38-(r-1)(1-n+m)-2\sum\limits_{k=1}^{r-1}\eta_{L}(\zeta^{k})+\sum\limits_{k=1}^{r-1}\sigma_{L}(\zeta^{k})).$$

Let $W$ be any exact symplectic filling of $\Sigma_{r}(L)$ and $X=W\cup C$. By the assumption,
$b_{2}^{+}(X)\geq b_{2}^{+}(C)\geq 2$,
$b_{2}(X)\geq b_{2}^{-}(X)\geq b_{2}^{-}(C)\geq 7$.
According to the same argument as in the proof of \cite[Proposition 3.1]{sv} (or of \cite[Proposition 6.3]{eg}), $X$ has symplectic Kodaira dimension zero \cite{li1}, and hence has the same rational homology as that of a K3 surface, an Enriques surface or a torus bundle over torus \cite{ms, ba, li2}. Since the Enriques surface has $b_{2}^{+}=1$ and a torus bundle over torus has $b_2\leq 6$, $X$ must be a rational homology K3 surface. By a covering space argument as in  the proof of \cite[Proposition 3.1]{sv} (or of \cite[Proposition 6.3]{eg}),   $X$ is actually an integer homology K3 surface.  Consequently,  $W$ is spin, $$\chi(W)=\chi(K3)-\chi(C)=\chi(\Sigma_{r}(\check{F}))=r-(r-1)(n-m)$$ and $$\sigma(W)=\sigma(K3)-\sigma(C)=\sigma(\Sigma_{r}(\check{F}))=\sum\limits_{k=1}^{r-1}\sigma_{L}(\zeta^{k}).$$
\end{proof}

\begin{proof}[Proof of Theorem~\ref{rfoldcover}]

Since $\sum\limits_{k=1}^{r-1}\eta_{L}(\zeta^{k})=0$, $\Sigma_{r}(L)$ is a rational homology sphere.  So we have $b_{1}(\Sigma_{r}(\check{F}))=0$ and $b_{3}(\Sigma_{r}(\check{F}))=0$. The Euler characteristic $$\chi(\Sigma_{r}(\check{F}))=r-(r-1)(n-m)$$ implies that $$b_{2}(\Sigma_{r}(\check{F}))=(r-1)(1-n+m).$$

Since  $b_{2}^{0}(\Sigma_{r}(\check{F}))\leq b_{1}(\Sigma_{r}(L))$, $b_{2}^{0}(\Sigma_{r}(\check{F}))=0$. So by Lemma~\ref{sig1}, $$b_{2}^{+}(\Sigma_{r}(\check{F}))=\frac{1}{2}((r-1)(1-n+m)+\sum\limits_{k=1}^{r-1}\sigma_{L}(\zeta^{k})).$$


Let $C=K3-\text{Int}(\Sigma_{r}(\check{F}))$ be the Calabi-Yau cap of $\Sigma_{r}(L)$. Then $b_{1}(C)=0$, $$b_{2}(C)=22-b_{2}(\Sigma_{r}(\check{F}))=22-(r-1)(1-n+m),$$ and $$b_{2}^{+}(C)=3-b_{2}^{+}(\Sigma_{r}(\check{F}))=3-\frac{1}{2}((r-1)(1-n+m)+\sum\limits_{i=1}^{r-1}\sigma_{L}(\zeta^{i})).$$ By assumption, $b_{2}(C)\geq 7$ and $b_{2}^{+}(C)\geq 2$. Applying \cite[Proposition 6.3]{eg},  we conclude that $W\cup C$ is the K3 surface for any exact symplectic filling $W$ of $\Sigma_{r}(K)$. So $W$ is spin, $b_{1}(W)=0$, $\chi(W)=\chi(\Sigma_{r}(\check{F}))=r-(r-1)(n-m)$ and $\sigma(W)=\sigma(\Sigma_{r}(\check{F}))=\sum\limits_{k=1}^{r-1}\sigma_{L}(\zeta^{k})$.
\end{proof}

Now we prove Theorems~\ref{2foldcover}, \ref{3foldcover} and \ref{4foldcover}. According to \cite[Section 2.14]{go}, for $r=2,3,4$ and $K$ is a knot, $\Sigma_{r}(K)$ is a rational homology sphere, that is $\sum\limits_{k=1}^{r-1}\eta_{K}(\zeta^{k})=0$.  So we apply Theorem~\ref{rfoldcover}.

\begin{proof}[Proof of Theorem~\ref{2foldcover}]
By Lemmas~\ref{cobordism1}, \ref{cobordism2}, \ref{cobordism3} and \ref{cobordism4}, all knots considered admit a degree-6 hats.

(1) The knots $3_1$, $5_2$, $7_2$, $7_4$, $8_{21}$, $9_{2}$, $9_{5}$, $9_{35}$, $m(9_{45})$, $m(10_{126})$, $10_{131}$, $10_{133}$, $m(10_{143})$, $m(10_{148})$, $10_{159}$ and $m(10_{165})$ all have a quasi-positive braid presentation with $n-m=-1$, and have signature $-2$. So $1-n+m=2$ and $\sigma_{K}(-1)=-2$. By Theorem~\ref{rfoldcover}, any exact filling $W$ of $\Sigma_{2}(K)$ has $b_{1}(W)=0$, $\chi(W)=3$ and $\sigma(W)=-2$.

(2) The knots $5_1$,  $7_3$, $7_5$, $8_{15}$, $9_{4}$, $9_{7}$, $9_{10}$, $9_{13}$, $9_{18}$, $9_{23}$, $9_{38}$, $9_{49}$, $10_{53}$, $10_{55}$, $10_{63}$, $10_{101}$, $10_{120}$, $10_{127}$, $10_{149}$ and  $m(10_{157})$  all have a quasi-positive braid presentation with $n-m=-3$, and have signature $-4$. So $1-n+m=4$ and $\sigma_{K}(-1)=-4$. By Theorem~\ref{rfoldcover}, any exact filling $W$ of $\Sigma_{2}(K)$ has $b_{1}(W)=0$,  $\chi(W)=5$ and $\sigma(W)=-4$.


The knot $m(10_{145})$ has a quasi-positive braid presentation with $n-m=-3$, and has signature $-2$. So $1-n+m=4$ and $\sigma_{K}(-1)=-2$. By Theorem~\ref{rfoldcover}, any exact filling $W$ of $\Sigma_{2}(K)$ has $b_{1}(W)=0$,  $\chi(W)=5$ and $\sigma(W)=-2$.


(3) The knots $7_1$,  $8_{19}$, $9_3$, $9_6$, $9_{9}$, $9_{16}$, $10_{49}$,  $10_{66}$, $10_{80}$, $10_{128}$, $10_{134}$ and $10_{142}$ all have a quasi-positive presentation with $n-m=-5$, and have signature $-6$. So $1-n+m=6$ and $\sigma_{K}(-1)=-6$. By Theorem~\ref{rfoldcover}, any exact filling $W$ of $\Sigma_{2}(K)$ has $b_{1}(W)=0$,  $\chi(W)=7$ and $\sigma(W)=-6$.


Both $10_{154}$ and $10_{161}$ have a quasi-positive presentation with $n-m=-5$, and have signature $-4$. So $1-n+m=6$ and $\sigma_{K}(-1)=-4$. By Theorem~\ref{rfoldcover}, any exact filling $W$ of $\Sigma_{2}(K)$ has $b_{1}(W)=0$,  $\chi(W)=7$ and $\sigma(W)=-4$.


(4) The knot $9_{1}$ have a quasi-positive presentation with $n-m=-7$, and have signature $-8$. So $1-n+m=8$ and $\sigma_{K}(-1)=-8$. By Theorem~\ref{rfoldcover}, any exact filling $W$ of $\Sigma_{2}(K)$ has $b_{1}(W)=0$,  $\chi(W)=9$ and $\sigma(W)=-8$.


Both $10_{139}$ or $10_{152}$ have quasi-positive presentation with $n-m=-7$, and have signature $-6$. So $1-n+m=8$ and $\sigma_{K}(-1)=-6$.  By Theorem~\ref{rfoldcover}, any exact filling $W$ of $\Sigma_{2}(K)$ has $b_{1}(W)=0$,  $\chi(W)=9$ and $\sigma(W)=-6$.
\end{proof}

\begin{proof}[Proof of Theorem~\ref{3foldcover}]
(1) By Lemma~\ref{cobordism2}, the knots $3_1$, $5_{2}$, $7_{2}$, $7_{4}$, $8_{21}$, $9_{2}$, $9_{5}$, $9_{35}$, $m(9_{45})$, $m(10_{126})$, $10_{131}$, $m(10_{143})$, $m(10_{148})$, $10_{159}$ and $m(10_{165})$  all wear Hirzebruch hats with bidegree $(3,3)$. The quasi-positive representations of these knots all satisfy that $n-m=-1$. So $2(1-n+m)=4$. We compute that $\sigma_{K}(e^{\frac{2\pi i}{3}})+\sigma_{K}(e^{\frac{4\pi i}{3}})=-4$, where $\sigma_{K}$ is the signature function of $K$.  For instance, according to Knotinfo \cite{lm}, the knot $3_1$ has the signature function $$\{\{0.3333333333, \{0, -1, -2\}, 1\}\}.$$ This means that $$\sigma_{3_1}(e^{s\pi i})=\left\{
\begin{array}{cc}
 0,    &  0\leq s <\frac{1}{3}, \\
 -1,   &  s=\frac{1}{3},\\
 -2,   &  \frac{1}{3}<s\leq1. 
\end{array}\right. 
$$
Hence $\sigma_{3_1}(e^{\frac{2\pi i}{3}})+\sigma_{3_1}(e^{\frac{4\pi i}{3}})=2\sigma_{3_1}(e^{\frac{2\pi i}{3}})=2\times (-2)=-4.$ By Theorem~\ref{rfoldcover}, any exact filling $W$ of $\Sigma_{3}(K)$ has  $b_{1}(W)=0$,  $\chi(W)=5$ and $\sigma(W)=-4$. 

Using Lemmas~\ref{cobordism3} and ~\ref{cobordism4}, one can prove cases (2) and (3) in a similar way as that of case (1).

(4) By Lemma~\ref{cobordism4}, the knots $7_1$, $8_{19}$, $9_3$, $10_{128}$ and $10_{134}$  all  wear a Hirzebruch hat with bidegree $(3,3)$. The quasi-positive representations of these knots all satisfy that $n-m=-5$. So $b_{2}(\Sigma_{3}(\check{F}))=2(1-n+m)=12$ and the cap $C$ has $b_{2}(C)=10$. We compute that $\sigma_{K}(e^{\frac{2\pi i}{3}})+\sigma_{K}(e^{\frac{4\pi i}{3}})=-8$, where $\sigma_{K}$ is the signature function of $K$. So $b_{2}^{+}(\Sigma_{3}(\check{F}))=2$, and the cap $C$ has $b_{2}^{+}(C)=1$. Thus, if $W$ is not negative definite, then $W\cup C$ must be a K3 surface. Therefore, $b_{1}(W)=0$, $\chi(W)=13$ and $\sigma(W)=-8$.
\end{proof}



\begin{proof}[Proof of Theorem~\ref{4foldcover}]

(1) By Lemma~\ref{cobordism2}, the knots $3_1$, $5_2$, $7_2$, $7_4$, $8_{21}$, $m(9_{45})$,  $m(10_{126})$, $m(10_{143})$, $m(10_{148})$ and $10_{159}$ all wear projective hats of degree-4. All these knots satisfy that  $3(1-n+m)=6$, and $\sigma_{K}(i)+\sigma_{K}(-1)+\sigma_{K}(-i)=-6$. So by Theorem~\ref{rfoldcover}, any exact filling $W$ of $\Sigma_{4}(K)$ has $b_{2}(W)=6$ and $b^{+}_{2}(W)=0$.

The proof of Case (2) is similar to that of case (1). The proof of case (3) is similar to that of case (4) in the proof of Theorem~\ref{3foldcover}.
\end{proof}

Then we prove Theorems \ref{2foldcoverlink}, \ref{3foldcoverlink} and \ref{4foldcoverlink}.

\begin{proof}[Proof of Theorem~\ref{2foldcoverlink}]
By Lemma~\ref{cobordism6}, all transverse links in this theorem wear a projective hat of degree $6$.

(1) Suppose $L$ is either $m(L10n104\{1,0,0\})$ or $m(L10n104\{1,1,0\})$. Then $L$ has a quasi-positive representation with $n-m=2$, $\sigma(L)=0$ and $\eta(L)=1$. So both the inequalities in Theorem \ref{rfold} hold. Hence, for any exact filling $W$ of $\Sigma_{2}(L)$, we have $\chi(W)=0$ and $\sigma(W)=0$. Therefore, $b_{1}(W)=1$, $b_{2}(W)=0$.

(2) Suppose $L$ is  $L10n94\{1,0\}$, $m(L11n381\{0,0\})$, $m(L11n381\{0,1\})$ or $L11n428\{1,0\}$. Then $L$ has a quasi-positive representation with $n-m=1$, $\sigma(L)=-1$ and $\eta(L)=1$. So both the inequalities in Theorem \ref{rfold}  hold. Hence, for any exact filling $W$ of $\Sigma_{2}(L)$, we have $\chi(W)=1$ and $\sigma(W)=-1$. Therefore, $b_{1}(W)=1$, $b^{+}_{2}(W)=0$, $b^{-}_{2}(W)=1$ and $b^{0}_{2}(W)=0$.

(3) Suppose $L$ is  $m(L11n226\{0\})$. Then $L$ has a quasi-positive representation with $n-m=0$, $\sigma(L)=-2$ and $\eta(L)=1$. So both the inequalities in Theorem \ref{rfold}  hold. Hence, for any exact filling $W$ of $\Sigma_{2}(L)$, we have $\chi(W)=2$ and $\sigma(W)=-2$. Therefore, $b_{1}(W)=1$, $b^{+}_{2}(W)=0$, $b^{-}_{2}(W)=1$ and $b^{0}_{2}(W)=0$.

(4) Suppose $L$ is $L11n204\{1\}$. Then $L$ has a quasi-positive representation with $n-m=-2$, $\sigma(L)=0$, and $\eta(L)=1$. So both the inequalities in Theorem \ref{rfold} hold. Hence, for any exact filling $W$ of $\Sigma_{2}(L)$, we have $\chi(W)=2$ and $\sigma(W)=0$. 

The proof of the remaining cases are similar.
\end{proof}

\begin{proof}[Proof of Theorem~\ref{3foldcoverlink}]
By Lemma~\ref{cobordism6}, all transverse links in this theorem wear a Hirzebruch hat of bidegree $(3,3)$. According to the following data, all these transverse links satisfy the two inequalities in Theorem~\ref{rfold}. 

(1) Both the links  $m(L10n104\{1,0,0\})$ and $m(L10n104\{1,1,0\})$ have quasi-positive representatives with $n-m=2$, satisfy $\sigma_{L}(e^{\frac{2\pi i}{3}})+\sigma_{L}(e^{\frac{4\pi i}{3}})=0$ and $\eta_{L}(e^{\frac{2\pi i}{3}})+\eta_{L}(e^{\frac{4\pi i}{3}})=2$. 

(2) The links $m(L11n381\{0,0\})$,  $m(L11n381\{0,1\})$ and $L11n428\{1,0\}$ have quasi-positive representatives with $n-m=1$, satisfy $\sigma_{L}(e^{\frac{2\pi i}{3}})+\sigma_{L}(e^{\frac{4\pi i}{3}})=-2$ and $\eta_{L}(e^{\frac{2\pi i}{3}})+\eta_{L}(e^{\frac{4\pi i}{3}})=2$. 

(3) The link $m(L10n94\{1,0\})$ has a quasi-positive representative with $n-m=1$, satisfies $\sigma_{L}(e^{\frac{2\pi i}{3}})+\sigma_{L}(e^{\frac{4\pi i}{3}})=0$ and $\eta_{L}(e^{\frac{2\pi i}{3}})+\eta_{L}(e^{\frac{4\pi i}{3}})=0$. 

(4) The link $m(L11n226\{0\})$ has a quasi-positive representative with $n-m=0$, satisfies $\sigma_{L}(e^{\frac{2\pi i}{3}})+\sigma_{L}(e^{\frac{4\pi i}{3}})=-2$ and $\eta_{L}(e^{\frac{2\pi i}{3}})+\eta_{L}(e^{\frac{4\pi i}{3}})=0$. 

The computation of the Levine-Tristram signatures and nullities are based on the Seifert matrix provided in Knotinfo  \cite{lm}.  Note that the given Seifert matrices and classical nullities in Knotinfo \cite{lm} imply that the choosing Seifert surfaces of these links are all connected.

In all these cases, the Betti numbers of the exact fillings are uniquely determined by the Euler characteristic and signatures, and are obtained as stated. 
\end{proof}

\begin{proof}[Proof of Theorem~\ref{4foldcoverlink}]
By Lemma~\ref{cobordism6}, all transverse links in this theorem wear a projective hat of degree $4$. According to the following data, all these transverse links satisfy the two inequalities in Theorem~\ref{rfold}. 

(1) Both the links  $m(L10n104\{1,0,0\})$ and $m(L10n104\{1,1,0\})$ have quasi-positive representatives with $n-m=2$, satisfy  $\sigma_{L}(i)+\sigma_{L}(-1)+\sigma_{L}(-i)=0$ and $\eta_{L}(i)+\eta_{L}(-1)+\eta_{L}(-i)=3$. 

(2) The links $m(L11n381\{0,0\})$,  $m(L11n381\{0,1\})$ and $L11n428\{1,0\}$  have quasi-positive representatives with $n-m=1$, satisfy $\sigma_{L}(i)+\sigma_{L}(-1)+\sigma_{L}(-i)=-3$ and $\eta_{L}(i)+\eta_{L}(-1)+\eta_{L}(-i)=3$. 

(3) The link $m(L10n94\{1,0\})$ has a quasi-positive representative with $n-m=1$, satisfies $\sigma_{L}(i)+\sigma_{L}(-1)+\sigma_{L}(-i)=-1$ and $\eta_{L}(i)+\eta_{L}(-1)+\eta_{L}(-i)=0$. 

(4) The link $m(L11n226\{0\})$ has a quasi-positive representative with $n-m=0$, satisfies $\sigma_{L}(i)+\sigma_{L}(-1)+\sigma_{L}(-i)=-4$ and $\eta_{L}(i)+\eta_{L}(-1)+\eta_{L}(-i)=1$. 

In all these cases, the Betti numbers of the exact fillings are uniquely determined by the Euler characteristic and signatures, and are obtained as stated.
\end{proof}

\section{Symplectic hats}
According to the main theorem in \cite{flmn}, the torus knots $T_{5,6}$ and $T_{3,11}$ wear projective hats of degree $6$, $T_{3,4}$ and $T_{2,7}$ wear projective hats of degree $4$. By \cite[Proposition 4.10]{eg}, $T_{3,5}$ wears a Hirzebruch hat of bidegree $(3,3)$. The main task of this section is to show that many quasi-positive links are relative symplectic cobordant to these torus knots, and hence wear corresponding symplectic hats. In \cite{eg}, Etnyre and Golla introduced some efficient methods to construct relative symplectic cobordism. Given a transverse link $L_{-}$ with a closed braid representation, then \cite[Lemma 2.8]{eg} tells us that there is a relative symplectic cobordism from $L_{-}$ to $L_{+}$ which is obtained by adding a positive generator to the closed braid representing $L_{-}$. 

\begin{lemma}\label{cobordism1}
The torus link $T_{2,8}$ in $(S^{3}, \xi_{st})$ has a relative symplectic cobordism to the torus knot $T_{3,5}$ in a piece of the symplectization of $(S^{3}, \xi_{std})$. In particular, the torus link $T_{2,q}$ wears a Hirzebruch hat of bidegree $(3,3)$ if $3\leq q\leq 8$.
\end{lemma}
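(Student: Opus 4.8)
The strategy is to glue a relative symplectic cobordism onto the hat that is already available for $T_{3,5}$. By \cite[Proposition 4.10]{eg}, $T_{3,5}$ wears a Hirzebruch hat $\hat F$ of bidegree $(3,3)$ in $\mathbb{C}P^1\times\mathbb{C}P^1-\text{Int}(B^4)$. Granting a relative symplectic cobordism $\Sigma$ from $T_{2,8}$ to $T_{3,5}$ in a piece of the symplectization, the surface $\Sigma\cup_{T_{3,5}}\hat F$ is a properly embedded symplectic surface in the same cap with boundary $-T_{2,8}$; as $\Sigma$ sits in the symplectization it misses the two $\mathbb{C}P^1$ factors, so the intersection numbers, hence the bidegree $(3,3)$, are unchanged and $T_{2,8}$ wears a Hirzebruch hat of bidegree $(3,3)$. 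Thus the whole statement reduces to producing the cobordism from $T_{2,8}$ to $T_{3,5}$.

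To construct it I would use the band--attachment construction of \cite[Lemma 2.8]{eg}: attaching a positive band to a braid for $L_-$ produces a relative symplectic cobordism to the closure of the enlarged braid, while positive (de)stabilizations and conjugations are transverse isotopies and cost nothing. The self--linking numbers fix the number of bands: $sl(T_{2,8})=8-2=6$ and $sl(T_{3,5})=10-3=7$, and since each positive band raises $sl$ by exactly $1$ (contributing $-1$ to the Euler characteristic) while (de)stabilizations leave $sl$ fixed, the cobordism must consist of a single positive band, so $\chi(\Sigma)=-1$. Concretely I would start from the $2$--braid $\sigma_1^8$ with closure $T_{2,8}$, positively stabilize once to a three--strand braid, and attach one positive band to arrive at a positive $3$--braid of exponent sum $10$ whose closure is $T_{3,5}=\widehat{(\sigma_1\sigma_2)^5}$.

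The delicate point, which I expect to be the main obstacle, is to identify the correct band. The obvious attempt fails: after stabilizing to $\sigma_1^8\sigma_2$, adjoining an adjacent generator only gives $\widehat{\sigma_1^9\sigma_2}=T_{2,9}$ or $\widehat{\sigma_1^8\sigma_2^2}=T_{2,8}\#(\text{Hopf})$, and neither is $T_{3,5}$. One is therefore forced to pass to a less obvious positive braid representative of $T_{2,8}$ on three strands and to attach a positive band joining the right pair of strands so that the closure becomes $(\sigma_1\sigma_2)^5$ up to conjugation. That a single symplectic band of this kind exists is natural from the complex--analytic side, since $T_{2,8}$ and $T_{3,5}$ are the links of the plane--curve singularities $A_7$ and $E_8$, whose Milnor numbers differ by one and which are adjacent; the real work is to write the band down explicitly and then to check, for instance via the reduced Burau representation and the resulting Alexander polynomial $\Delta_{T_{3,5}}\doteq t^8-t^7+t^5-t^4+t^3-t+1$ together with a count of components, that the enlarged closure is genuinely $T_{3,5}$ rather than some other $sl=7$ link such as $T_{2,9}$.

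Finally, the ``in particular'' follows by concatenation. For each $3\le q\le 7$, adjoining one positive generator $\sigma_1$ to $\sigma_1^q$ gives, by \cite[Lemma 2.8]{eg}, a relative symplectic cobordism from $T_{2,q}$ to $T_{2,q+1}$ (here $sl$ increases from $q-2$ to $q-1$). Concatenating $T_{2,q}\to T_{2,q+1}\to\cdots\to T_{2,8}$ with the cobordism $T_{2,8}\to T_{3,5}$ and then gluing on $\hat F$ as above shows that every $T_{2,q}$ with $3\le q\le 8$ wears a Hirzebruch hat of bidegree $(3,3)$, completing the proof.
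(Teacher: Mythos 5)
Your overall architecture is exactly the paper's: build the cobordism from Markov moves (which are transverse isotopies) plus positive band attachments via \cite[Lemma 2.8]{eg}, glue it to the bidegree-$(3,3)$ Hirzebruch hat of $T_{3,5}$ from \cite[Proposition 4.10]{eg} without changing the bidegree, and get the range $3\le q\le 8$ by concatenating the elementary cobordisms $T_{2,q}\to T_{2,q+1}$. Your self-linking count showing that exactly one positive band is needed is also correct and is a useful sanity check.

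However, there is a genuine gap: the entire content of the lemma is the existence of that one band, and you explicitly stop short of producing it (``the real work is to write the band down explicitly''). The adjacency of the singularities $A_7$ and $E_8$ is heuristic motivation, not a construction --- adjacency of plane curve singularities does not by itself hand you a positive band on a specific braid representative, and without the band the proof is not complete. The missing step is in fact a two-line braid manipulation, which is what the paper supplies: stabilize $\sigma_1^{8}$ to $\sigma_1^{8}\sigma_2\sim\sigma_1^{2}\sigma_2\sigma_1^{6}=(\sigma_1\sigma_2)^{2}\sigma_1^{5}$, then insert a single $\sigma_2$ in the middle of the $\sigma_1$-block to get $(\sigma_1\sigma_2)^{2}\sigma_1^{2}\sigma_2\sigma_1^{3}$, which by the braid relation equals $(\sigma_1\sigma_2)^{4}\sigma_1^{2}$ and is conjugate to $(\sigma_2\sigma_1)^{5}$, whose closure is $T_{3,5}$. (Your verification scheme via the Burau representation or the Alexander polynomial is unnecessary once the braid word is exhibited as conjugate to $(\sigma_1\sigma_2)^5$ on the nose.) With that computation inserted, your argument coincides with the paper's proof.
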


\begin{proof}
In the following, we denote with $\sigma _{i}$ the generators of the braid group. We use $\nearrow_{k}$ to denote the insertion of $k$ positive generators, $\sim $ to denote conjugation, $\sim_{D}$ to denote Markov destabilisation, and $\sim_{S}$ to denote Markov stabilization.  We construct a relative symplectic cobordism from $T_{2,8}$ to $T_{3,5}$ as follows: 

$T_{2,8}=\sigma_{1}^{8}\sim_{S}\sigma_{1}^{2}\sigma_{2}\sigma_{1}^{6}=(\sigma_{1}\sigma_{2})^{2}\sigma_{1}^{5}\nearrow_{1}(\sigma_{1}\sigma_{2})^{2}\sigma_{1}^{2}\sigma_{2}\sigma_{1}^{3}=(\sigma_{1}\sigma_{2})^{4}\sigma_{1}^{2}=(\sigma_{2}\sigma_{1})^{5}=T_{3,5}$,

where both the first and the last $``="$ imply that the links are the closures of the braids. Obviously, for any $3\leq q<8$, $T_{2,q}$ is relative symplectic cobordant to $T_{2,8}$. Since $T_{3,5}$ wears a Hirzebruch hat of bidegree $(3,3)$, so is $T_{2,q}$ for $3\leq q\leq8$.
\end{proof}

\begin{lemma}\label{cobordism2}
The quasi-positive representative of the knot $K$ in the knot table wears 

(1) a projective hat of degree-$6$ if $K$ is either $3_1$, $5_2$, $7_2$, $7_4$, $8_{21}$, $9_{2}$, $9_{5}$, $9_{35}$, $m(9_{45})$, $m(10_{126})$, $10_{131}$, $10_{133}$, $m(10_{143})$, $m(10_{148})$, $10_{159}$ or $m(10_{165})$;

(2) a Hirzebruch hat of bidegree-$(3,3)$ if $K$ is either $3_1$, $5_{2}$, $7_{2}$, $7_{4}$, $8_{21}$, $9_{2}$, $9_{5}$, $9_{35}$, $m(9_{45})$, $m(10_{126})$, $10_{131}$,
$m(10_{143})$, $m(10_{148})$, $10_{159}$ or $m(10_{165})$;

(3) a projective hat of degree-$4$ if $K$ is either $3_1$, $5_2$, $7_2$, $7_4$, $8_{21}$, $m(9_{45})$,  $m(10_{126})$, $m(10_{143})$, $m(10_{148})$ or $10_{159}$.

All these transverse knots have self-linking numbers  $1$.
\end{lemma}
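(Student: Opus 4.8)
The plan is to obtain each hat by \emph{transferring} it from one of the torus knots singled out at the start of this section, using relative symplectic cobordisms. The key is the following gluing principle: if $L_-$ is relatively symplectic cobordant to $L_+$ via a symplectic surface $\Sigma\subset(S^3\times[0,1],d(e^t\alpha_{st}))$ with $\partial\Sigma=-L_-\sqcup L_+$, and $\hat F\subset C$ is a symplectic hat for $L_+$ in a cap $C$, then gluing the symplectization piece $S^3\times[0,1]$ to $C$ along $S^3\times\{1\}$ recovers $C$ up to a collar, and $\Sigma\cup\hat F$ becomes a properly embedded symplectic surface in $C$ with boundary $-L_-$, i.e.\ a symplectic hat for $L_-$. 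Since $\Sigma$ lies in the collar, it contributes nothing to $H_2(C)$, so the degree (resp.\ bidegree) is unchanged. Thus it suffices, for each knot $K$, to construct a relative symplectic cobordism from its quasi-positive representative to a torus knot already known to wear the required hat: $T_{5,6}$ or $T_{3,11}$ for the degree-$6$ projective hat \cite{flmn}, $T_{3,5}$ or any $T_{2,q}$ with $3\le q\le 8$ (via Lemma~\ref{cobordism1}) for the bidegree-$(3,3)$ Hirzebruch hat, and $T_{3,4}$ or $T_{2,7}$ for the degree-$4$ projective hat.

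To build these cobordisms I would use exactly the moves appearing in Lemma~\ref{cobordism1}: positive band insertions $\nearrow_k$, which realize genuine symplectic cobordisms by \cite[Lemma 2.8]{eg}, together with conjugation $\sim$, positive Markov stabilization $\sim_S$, and destabilization $\sim_D$, all of which preserve the transverse link type and hence correspond to product/collar pieces. Starting from a quasi-positive braid word for $K$ read off from KnotInfo \cite{lm}, I would normalize it by conjugation and Markov moves and then insert positive generators until, after conjugation, it becomes a standard torus-knot braid $(\sigma_1\cdots\sigma_{p-1})^q$ for the chosen target $T_{p,q}$. The three lists are nested, with $(3)\subset(2)\subset(1)$ (together with the single extra knot $10_{133}$ in list $(1)$), which is exactly what one expects: reaching a larger target torus knot by adding bands is a weaker requirement, so more knots qualify. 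I would exploit this by composing with the fixed cobordisms $T_{3,4}\to T_{3,5}\to T_{5,6}$ (obtained again by positive insertions and a stabilization), so that a single cobordism landing on the smallest feasible target automatically yields the higher-degree hats as well.

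The self-linking number statement then follows immediately: each quasi-positive braid produced above has $n-m=-1$, as recorded in the geography proofs, and by the Remark following Theorem~\ref{rfoldcover} a transverse knot presented by such a braid has $sl(K)=m-n=1$.

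The hard part will be the case-by-case combinatorial search: for each of the roughly sixteen knots one must exhibit an explicit quasi-positive braid word and an explicit, monotone sequence of moves terminating at the correct torus knot. The essential constraint is that the cobordism can only \emph{add} positive bands and never remove crossings, so the target torus knot must dominate $K$; the work lies in verifying, in each instance, that such an increasing sequence exists and reaches a torus knot wearing the desired hat. A secondary point is to confirm that the glued surface $\Sigma\cup\hat F$ is connected and realizes the stated degree, which is guaranteed by the homological remark above but should be checked alongside each explicit construction.
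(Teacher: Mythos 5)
Your strategy is exactly the paper's: transfer hats from the torus knots $T_{3,11}$ (or $T_{5,6}$), $T_{3,5}$ (or $T_{2,q}$, $3\le q\le 8$, via Lemma~\ref{cobordism1}), and $T_{3,4}$/$T_{2,7}$ by building relative symplectic cobordisms out of positive band insertions (\cite[Lemma 2.8]{eg}), conjugations, and Markov moves, and then use the nesting of targets to get the three lists, with the self-linking claim following from $n-m=-1$. However, the entire substance of the paper's proof is the explicit case-by-case verification you defer to at the end: for each of the sixteen knots the paper writes down a concrete quasi-positive braid word from KnotInfo and an explicit chain of moves terminating at a specific torus knot or link ($T_{2,5}$, $T_{2,6}$, $T_{2,7}$, $T_{2,8}$, $T_{3,5}$, or $T_{3,8}$ for $10_{133}$, which is why that knot appears only in list (1)). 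As written, your proposal is a correct and faithful outline of the argument, but it is an outline: without exhibiting those sixteen sequences of moves (and checking that each lands on a torus knot carrying the required hat), the lemma is not yet proved.
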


\begin{proof} Below we adopt the notations in the proof of Lemma~\ref{cobordism1}. We construct relative symplectic cobordisms as follows. 

$3_{1}=\sigma _{1}^{3}=T_{2,3}$.


$5_{2}=\sigma_{1}^{2}\sigma_{2}\cdot\sigma_{2}\sigma_{1}\sigma_{2}^{-1}\nearrow_{1}\sigma_{1}^{2}\sigma _{2}^{2}\sigma_{1}\sim \sigma_{1}^{3}\sigma_{2}^{2}\nearrow_{1}\sigma_{1}^{3}\sigma_{2}\sigma_{1}\sigma_{2}=\sigma _{1}^{3}\sigma_{1}\sigma_{2}\sigma_{1}\sim_{D}\sigma_{1}^{5}=T_{2,5}$.

$7_{2}=\sigma^{2}_{1}\cdot\sigma_{3}\sigma_{2}\sigma^{-1}_{3}\cdot\sigma_{2}\sigma_{1}\sigma^{-1}_{2}\cdot\sigma_{3}\nearrow_{2}\sigma^{2}_{1}\sigma_{3}\sigma_{2}^{2}\sigma_{1}\sigma_{3}\sim \sigma _{1}^{3}\sigma _{2}^{2}\sigma _{3}^{2}\nearrow_{1}\sigma _{1}^{3}\sigma _{2}^{2}\sigma _{3}\sigma _{2}\sigma _{3}=\sigma _{1}^{3}\sigma _{2}^{2}\sigma _{2}\sigma _{3}\sigma _{2}$
$\sim _{D}\sigma _{1}^{3}\sigma _{2}^{4}\nearrow_{1}\sigma _{1}^{3}\sigma _{2}\sigma _{1}\sigma _{2}^{3}=\sigma _{1}^{3}\sigma _{1}^{3}\sigma _{2}\sigma _{1}\sim _{D}\sigma _{1}^{7}=T_{2,7}$.

$7_{4}=\sigma_{1}\cdot\sigma_{3}\sigma_{2}\sigma^{-1}_{3}\cdot\sigma_{3}\sigma_{2}\sigma_{1}\sigma^{-1}_{2}\sigma^{-1}_{3}\cdot\sigma_{2}\sigma_{1}\sigma^{-1}_{2}\cdot\sigma_{3}\nearrow_{2}\sigma_{1}\sigma_{3}\sigma_{2}^{2}\sigma_{1}^{2}\sigma_{3}\sim\sigma_{1}^{3}\sigma_{2}^{2}\sigma_{3}^{2}\nearrow_{1}\sigma_{1}^{3}\sigma_{2}^{2}\sigma_{3}\sigma_{2}\sigma_{3}$\\
$=\sigma_{1}^{3}\sigma_{2}^{2}\sigma_{2}\sigma_{3}\sigma_{2}\sim_{D}\sigma_{1}^{3}\sigma_{2}^{4}\nearrow_{1}\sigma_{1}^{3}\sigma_{2}\sigma_{1}\sigma_{2}^{3}=\sigma_{1}^{3}\sigma_{1}^{3}\sigma_{2}\sigma_{1}\sim_{D}\sigma_{1}^{7}=T_{2,7}$.

$8_{21}=\sigma_{2}\sigma_{1}^{2}\sigma^{-1}_{2}\cdot\sigma^{-1}_{2}\sigma_{1}\sigma_{2}\cdot\sigma_{2}\nearrow_{2}\sigma_{2}\sigma_{1}^{3}\sigma_{2}^{2}\sim\sigma_{1}^{3}\sigma_{2}^{3}\nearrow_{1}\sigma_{1}^{3}\sigma_{2}\sigma_{1}\sigma_{2}^{2}=\sigma_{1}^{3}\sigma_{1}^{2}\sigma_{2}\sigma_{1}\sim_{D}\sigma_{1}^{6}$\\
$=T_{2,6}$.

$9_{2}=\sigma_{2}\cdot\sigma_{2}\sigma_{1}\sigma_{2}^{-1}\cdot\sigma_{4}\sigma_{3}\sigma_{2}\sigma_{3}^{-1}\sigma_{4}^{-1}\cdot\sigma_{2}\cdot\sigma_{3}\sigma_{2}\sigma_{1}\sigma_{2}^{-1}\sigma_{3}^{-1}\cdot\sigma_{4}\nearrow_{4}\sigma_{2}^{2}\sigma_{1}\sigma_{4}\sigma_{3}\sigma_{2}^{2}\sigma_{3}\sigma_{2}\sigma_{1}\sigma_{2}^{-1}\sigma_{4}$\\
$\sim\sigma_{4}\sigma_{3}\sigma_{2}^{2}\sigma_{3}\sigma_{2}\sigma_{1}\sigma_{2}^{-1}\sigma_{4}\sigma_{2}^{2}\sigma_{1}=\sigma_{4}\sigma_{3}\sigma_{2}^{2}\sigma_{3}\sigma_{2}\sigma_{1}\sigma_{2}\sigma_{1}\sigma_{4}=\sigma_{4}\sigma_{3}\sigma_{2}^{2}\sigma_{3}\sigma_{2}^{2}\sigma_{1}\sigma_{2}\sigma_{4}\sim_{D}\sigma_{4}\sigma_{3}\sigma_{2}^{2}\sigma_{3}\sigma_{2}^{3}\sigma_{4}$\\
$\sim\sigma_{4}^{2}\sigma_{3}\sigma_{2}^{2}\sigma_{3}\sigma_{2}^{3}\nearrow_{1}\sigma_{4}\sigma_{3}\sigma_{4}\sigma_{3}\sigma_{2}^{2}\sigma_{3}\sigma_{2}^{3}=\sigma_{3}\sigma_{4}\sigma_{3}^{2}\sigma_{2}^{2}\sigma_{3}\sigma_{2}^{3}\sim_{D}\sigma_{3}^{3}\sigma_{2}^{2}\sigma_{3}\sigma_{2}^{3}=\sigma_{3}^{2}(\sigma_{3}\sigma_{2})^{3}\sigma_{2}$\\
$\nearrow_{1}\sigma_{3}\sigma_{2}\sigma_{3}(\sigma_{3}\sigma_{2})^{3}\sigma_{2}=(\sigma_{3}\sigma_{2})^{5}=T_{3,5}$.

$9_{5}=\sigma_{1}\sigma_{2}\cdot\sigma_{2}\sigma_{1}\sigma_{2}^{-1}\cdot\sigma_{4}\sigma_{3}\sigma_{4}^{-1}\cdot\sigma_{3}\sigma_{2}\sigma_{1}\sigma_{2}^{-1}\sigma_{3}^{-1}\cdot\sigma_{4}\nearrow_{4}\sigma_{1}\sigma_{2}^{2}\sigma_{1}\sigma_{4}\sigma_{3}^{2}\sigma_{2}\sigma_{1}\sigma_{4}\sim\sigma_{2}^{2}\sigma_{1}\sigma_{4}\sigma_{3}^{2}\sigma_{2}\sigma_{1}\sigma_{4}\sigma_{1}$\\
$=\sigma_{2}^{2}\sigma_{4}\sigma_{3}^{2}\sigma_{1}\sigma_{2}\sigma_{1}^{2}\sigma_{4}=\sigma_{2}^{2}\sigma_{4}\sigma_{3}^{2}\sigma_{2}^{2}\sigma_{1}\sigma_{2}\sigma_{4}\sim_{D}\sigma_{2}^{2}\sigma_{4}\sigma_{3}^{2}\sigma_{2}^{3}\sigma_{4}\sim\sigma_{2}^{5}\sigma_{3}^{2}\sigma_{4}^{2}\nearrow_{1}\sigma_{2}^{5}\sigma_{3}^{2}\sigma_{4}\sigma_{3}\sigma_{4}$\\
$=\sigma_{2}^{5}\sigma_{3}^{3}\sigma_{4}\sigma_{3}\sim_{D}\sigma_{2}^{5}\sigma_{3}^{4}\nearrow_{1}\sigma_{2}^{3}\sigma_{3}\sigma_{2}^{2}\sigma_{3}^{4}=\sigma_{2}(\sigma_{2}\sigma_{3})^{3}\sigma_{3}^{3}=(\sigma_{2}\sigma_{3})^{4}\sigma_{3}^{2}\sim\sigma_{3}(\sigma_{2}\sigma_{3})^{4}\sigma_{3}$\\
$=(\sigma_{2}\sigma_{3})^{5}=T_{3,5}$.


$9_{35}=\sigma_{1}\sigma_{2}\sigma_{3}\sigma_{4}\cdot\sigma_{4}\sigma_{3}\sigma_{2}\sigma_{3}^{-1}\sigma_{4}^{-1}\cdot\sigma_{3}\sigma_{2}\sigma_{1}\sigma_{2}^{-1}\sigma_{3}^{-1}\nearrow_{3}\sigma_{1}\sigma_{2}\sigma_{3}\sigma_{4}^{2}\sigma_{3}\sigma_{2}^{2}\sigma_{1}\sim\sigma_{1}^{2}\sigma_{2}\sigma_{3}\sigma_{4}^{2}\sigma_{3}\sigma_{2}^{2}$\\
$\nearrow_{2}\sigma_{1}\sigma_{2}\sigma_{1}\sigma_{2}\sigma_{3}\sigma_{4}\sigma_{3}\sigma_{4}\sigma_{3}\sigma_{2}^{2}=\sigma_{2}\sigma_{1}\sigma_{2}^{2}\sigma_{3}^{2}\sigma_{4}\sigma_{3}^{2}\sigma_{2}^{2}\sim_{D}\sigma_{2}^{3}\sigma_{3}^{4}\sigma_{2}^{2}\sim\sigma_{3}\sigma_{2}^{5}\sigma_{3}^{3}\nearrow_{1}\sigma_{3}\sigma_{2}^{3}\sigma_{3}\sigma_{2}^{2}\sigma_{3}^{3}$\\
$=\sigma_{3}\sigma_{2}(\sigma_{2}\sigma_{3})^{3}\sigma_{3}^{2}=\sigma_{3}(\sigma_{2}\sigma_{3})^{4}\sigma_{3}=(\sigma_{2}\sigma_{3})^{5}=T_{3,5}$.

$m(9_{45})=\sigma_{2}^{-1}\sigma_{1}^{2}\sigma_{2}\cdot\sigma_{2}\sigma_{1}\sigma_{2}^{-1}\cdot\sigma_{3}\cdot\sigma_{3}\sigma_{2}\sigma_{3}^{-1}\nearrow_{3}\sigma_{1}^{2}\sigma_{2}^{2}\sigma_{1}\sigma_{3}^{2}\sigma_{2}\sim\sigma_{2}^{2}\sigma_{3}^{2}\sigma_{1}\sigma_{2}\sigma_{1}^{2}=\sigma_{2}^{2}\sigma_{3}^{2}\sigma_{2}^{2}\sigma_{1}\sigma_{2}$\\
$\sim_{D}\sigma_{2}^{2}\sigma_{3}^{2}\sigma_{2}^{3}\nearrow_{1}\sigma_{2}^{2}\sigma_{3}\sigma_{2}\sigma_{3}\sigma_{2}^{3}=\sigma_{2}^{2}\sigma_{2}\sigma_{3}\sigma_{2}\sigma_{2}^{3}\sim_{D}\sigma_{2}^{7}=T_{2,7}$.

$m(10_{126})=\sigma^{2}_{1}\cdot\sigma^{3}_{1}\sigma_{2}\sigma^{-3}_{1}\cdot\sigma_{2}\nearrow_{4}\sigma^{5}_{1}\sigma _{2}\sigma _{1}\sigma _{2}=\sigma^{5}_{1}\sigma_{1}\sigma _{2}\sigma _{1}\sim _{D}\sigma _{1}^{7}=T_{2,7}$.

$10_{131}=\sigma_{1}\cdot\sigma_{1}^{2}\sigma_{2}^{-1}\sigma_{3}^{-1}\sigma_{2}\sigma_{4}\sigma_{2}^{-1}\sigma_{3}\sigma_{2}\sigma_{1}^{-2}\cdot\sigma_{2}^{-1}\sigma_{3}\sigma_{2}\cdot\sigma_{4}\sigma_{3}\cdot\sigma_{2}^{-1}\sigma_{3}^{-1}\sigma_{2}\sigma_{4}\sigma_{2}^{-1}\sigma_{3}\sigma_{2}$\\
$\nearrow_{5}\sigma_{1}^{3}\sigma_{4}\sigma_{3}^{2}\sigma_{2}\sigma_{4}^{2}\sigma_{3}\sigma_{2}\sim\sigma_{1}^{3}\sigma_{3}^{2}\sigma_{2}\sigma_{4}^{2}\sigma_{3}\sigma_{4}\sigma_{2}=\sigma_{1}^{3}\sigma_{3}^{2}\sigma_{2}\sigma_{3}\sigma_{4}\sigma_{3}^{2}\sigma_{2}\sim_{D}\sigma_{1}^{3}\sigma_{3}^{2}\sigma_{2}\sigma_{3}^{3}\sigma_{2}$\\
$\nearrow_{1}\sigma_{3}^{2}\sigma_{1}\sigma_{2}\sigma_{1}^{2}\sigma_{2}\sigma_{3}^{3}\sigma_{2}=\sigma_{3}^{2}\sigma_{2}^{2}\sigma_{1}\sigma_{2}^{2}\sigma_{3}^{3}\sigma_{2}\sim_{D}\sigma_{3}^{2}\sigma_{2}^{4}\sigma_{3}^{3}\sigma_{2}\sim\sigma_{2}^{4}\sigma_{3}^{3}\sigma_{2}\sigma_{3}^{2}=\sigma_{2}^{4}\sigma_{3}^{2}\sigma_{2}\sigma_{3}\sigma_{2}\sigma_{3}$\\
$\sim\sigma_{2}^{3}\sigma_{3}^{2}\sigma_{2}\sigma_{3}\sigma_{2}\sigma_{3}\sigma_{2}=\sigma_{2}^{3}\sigma_{3}\sigma_{2}\sigma_{3}\sigma_{2}\sigma_{3}\sigma_{2}\sigma_{3}\sim\sigma_{2}^{2}\sigma_{3}\sigma_{2}\sigma_{3}\sigma_{2}\sigma_{3}\sigma_{2}\sigma_{3}\sigma_{2}=(\sigma_{2}\sigma_{3})^{5}=T_{3,5}$.

$10_{133}=\sigma_{1}^{2}\sigma_{2}\cdot\sigma_{2}\sigma_{4}^{-1}\sigma_{3}^{-1}\sigma_{2}\sigma_{3}\sigma_{4}\sigma_{2}^{-1}\cdot\sigma_{2}\sigma_{3}^{-1}\sigma_{2}^{-1}\sigma_{3}\sigma_{1}^{-1}\sigma_{3}^{-1}\sigma_{2}\sigma_{3}\sigma_{4}\sigma_{3}^{-1}\sigma_{2}^{-1}\sigma_{3}\sigma_{1}\sigma_{3}^{-1}\sigma_{2}\sigma_{3}\sigma_{2}^{-1}\cdot\sigma_{2}\sigma_{3}^{-1}\sigma_{2}^{-1}\sigma_{3}\sigma_{1}\sigma_{3}^{-1}\sigma_{2}\sigma_{3}\sigma_{2}^{-1}\nearrow_{7}\sigma_{1}^{2}\sigma_{2}^{3}\sigma_{3}\sigma_{4}\sigma_{3}\sigma_{4}\sigma_{1}^{2}\sigma_{2}\sigma_{3}=\sigma_{1}^{2}\sigma_{2}^{3}\sigma_{3}^{2}\sigma_{4}\sigma_{3}\sigma_{1}^{2}\sigma_{2}\sigma_{3}$\\
$\sim_{D}\sigma_{1}^{2}\sigma_{2}^{3}\sigma_{3}^{3}\sigma_{1}^{2}\sigma_{2}\sigma_{3}=\sigma_{1}^{2}\sigma_{2}^{3}\sigma_{1}^{2}\sigma_{3}^{3}\sigma_{2}\sigma_{3}=\sigma_{1}^{2}\sigma_{2}^{3}\sigma_{1}^{2}\sigma_{2}\sigma_{3}\sigma_{2}^{3}\sim_{D}\sigma_{1}^{2}\sigma_{2}^{3}\sigma_{1}^{2}\sigma_{2}^{4}$\\
$\nearrow_{5}(\sigma_{1}\sigma_{2})^{2}\sigma_{2}(\sigma_{1}\sigma_{2})^{4}\sigma_{2}\sigma_{1}\sigma_{2}=(\sigma_{1}\sigma_{2})^{8}=T_{3,8}$.

$m(10_{143})=\sigma_{1}\cdot\sigma^{3}_{1}\sigma_{2}\sigma^{-3}_{1}\cdot\sigma^{2}_{2}\nearrow_{4}\sigma^{4}_{1}\sigma_{2}\sigma_{1}\sigma^{2}_{2}=\sigma^{4}_{1}\sigma_{1}^{2}\sigma_{2}\sigma_{1}\sim_{D}\sigma_{1}^{7}=T_{2,7}$.

$m(10_{148})=\sigma_{1}\cdot\sigma^{-1}_{2}\sigma^{-2}_{1}\sigma_{2}\sigma^{2}_{1}\sigma_{2}\cdot\sigma_{2}\cdot\sigma_{2}\sigma_{1}\sigma^{-1}_{2}\nearrow_{3}\sigma_{2}\sigma^{2}_{1}\sigma^{3}_{2}\sigma_{1}\sim\sigma_{1}\sigma_{2}\sigma^{2}_{1}\sigma^{3}_{2}=\sigma_{2}^{2}\sigma_{1}\sigma_{2}\sigma^{3}_{2}$\\
$\sim_{D}\sigma_{2}^{6}=T_{2,6}$.

$10_{159}=\sigma_{2}^{-1}\sigma_{1}^{-2}\sigma_{2}\sigma_{1}^{2}\sigma_{2}\cdot\sigma_{2}^{-1}\sigma_{1}\sigma_{2}\cdot\sigma_{2}\sigma_{1}^{2}\sigma_{2}^{-1}\nearrow_{3}\sigma_{1}^{3}\sigma_{2}^{2}\sigma_{1}^{2}\sim\sigma_{1}^{5}\sigma_{2}^{2}\nearrow_{1}\sigma_{1}^{5}\sigma_{2}\sigma_{1}\sigma_{2}$\\
$=\sigma_{1}^{5}\sigma_{1}\sigma_{2}\sigma_{1}\sim_{D}\sigma_{1}^{7}=T_{2,7}$.

$m(10_{165})=\sigma_{3}\sigma_{2}^{-1}\sigma_{1}\sigma_{2}\sigma_{3}^{-1}\cdot\sigma_{2}\cdot\sigma_{2}\sigma_{1}\sigma_{2}^{-1}\cdot\sigma_{3}\cdot\sigma_{3}\sigma_{2}\sigma_{1}\sigma_{2}^{-1}\sigma_{3}^{-1}\nearrow_{4}\sigma_{3}\sigma_{1}\sigma_{2}^{3}\sigma_{1}\sigma_{3}^{2}\sigma_{2}\sigma_{1}\sigma_{3}^{-1}$\\
$\sim\sigma_{2}^{3}\sigma_{1}\sigma_{3}^{2}\sigma_{2}\sigma_{1}^{2}=\sigma_{2}^{3}\sigma_{3}^{2}\sigma_{1}\sigma_{2}\sigma_{1}^{2}=\sigma_{2}^{3}\sigma_{3}^{2}\sigma_{2}^{2}\sigma_{1}\sigma_{2}\sim_{D}\sigma_{2}^{3}\sigma_{3}^{2}\sigma_{2}^{3}\nearrow_{1}\sigma_{2}^{3}\sigma_{3}\sigma_{2}\sigma_{3}\sigma_{2}^{3}=\sigma_{2}^{4}\sigma_{3}\sigma_{2}^{4}$\\
$\sim_{D}\sigma_{2}^{8}=T_{2,8}$.

(1) All these transverse knots are relative symplectic cobordant to $T_{3,11}$. So they all wear projective hats of degree-6. 

(2) By Lemma~\ref{cobordism1}, the knots $3_1$, $5_{2}$, $7_{2}$, $7_{4}$, $8_{21}$, $9_{2}$, $9_{5}$, $9_{35}$, $m(9_{45})$, $m(10_{126})$, $10_{131}$,
$m(10_{143})$, $m(10_{148})$, $10_{159}$ and $m(10_{165})$ are all relative symplectic cobordant to $T_{3,5}$. So they wear a Hirzebruch hat of bidegree-$(3,3)$. 

(3) The knots $3_1$, $5_2$, $7_2$, $7_4$, $8_{21}$, $m(9_{45})$, $m(10_{126})$, $m(10_{143})$, $m(10_{148})$ and $10_{159}$ are all relative symplectic cobordant to  $T_{2,7}$. So they all wear projective hats of degree-$4$.
\end{proof}

\begin{lemma}\label{cobordism3}
The quasi-positive representative of the knot $K$ in the knot table wears 

(1) a projective hat of degree-$6$ if $K$ is either $5_1$,  $7_3$, $7_5$, $8_{15}$, $9_{4}$, $9_{7}$, $9_{10}$, $9_{13}$, $9_{18}$, $9_{23}$, $9_{38}$, $9_{49}$, $10_{53}$, $10_{55}$, $10_{63}$, $10_{101}$, $10_{120}$, $10_{127}$, $10_{149}$, $m(10_{157})$ or $m(10_{145})$;

(2) a Hirzebruch hat of bidegree-$(3,3)$ if $K$ is either $5_1$,  $7_3$, $7_5$, $8_{15}$, $9_{4}$, $9_{7}$, $9_{10}$, $9_{13}$, $9_{18}$, $9_{23}$, $9_{38}$, $9_{49}$, $10_{120}$, $10_{127}$, $10_{149}$ or $m(10_{157})$;

(3) a projective hat of degree-$4$ if $K$ is either $5_1$,  $7_3$, $7_5$ or $8_{15}$.

All these transverse knots have self-linking numbers  $3$.   
\end{lemma}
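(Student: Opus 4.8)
The plan is to follow the strategy of Lemmas~\ref{cobordism1} and \ref{cobordism2}: reduce each case to a torus knot whose hat is already known, and build the connecting relative symplectic cobordism by hand. From the start of this section, $T_{3,11}$ and $T_{5,6}$ wear projective hats of degree $6$, $T_{2,7}$ wears a projective hat of degree $4$, and $T_{3,5}$ wears a Hirzebruch hat of bidegree $(3,3)$. The key tool is \cite[Lemma 2.8]{eg}, by which inserting a positive generator into a closed braid yields a relative symplectic cobordism to the new transverse link, while conjugation and Markov (de)stabilization realize transverse isotopies and cost nothing. Because such a cobordism $\Sigma\subset S^{3}\times[0,1]$ lies in the symplectization, gluing it to a hat $\hat{F}_{+}\subset C$ of the target produces a hat of the source in the same cap $C$ and of the same degree (resp.\ bidegree), since the relevant homology class in $C$ is carried by $\hat{F}_{+}$ alone. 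Hence it suffices to connect each listed knot to the appropriate torus knot by an explicit chain of moves.

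Concretely, for each $K$ I would take its quasi-positive braid word from KnotInfo \cite{lm} (so that $n-m=-3$ and $sl(K)=3$) and transform it using the notation of Lemma~\ref{cobordism1}: $\nearrow_{k}$ for inserting $k$ positive generators, $\sim$ for conjugation, and $\sim_{S},\sim_{D}$ for Markov stabilization and destabilization. The natural targets are $T_{3,5}$ for part (2), $T_{2,7}$ for part (3), and a degree-$6$ torus knot for part (1). A single chain organizes the first two parts: a positive cobordism from $K$ to $T_{3,5}$ already gives the bidegree-$(3,3)$ hat of (2), and further insertions realizing $T_{3,5}=(\sigma_{2}\sigma_{1})^{5}\nearrow_{12}(\sigma_{2}\sigma_{1})^{11}=T_{3,11}$ then give the degree-$6$ hat of (1); similarly $5_{1}=T_{2,5}=\sigma_{1}^{5}\nearrow_{2}\sigma_{1}^{7}=T_{2,7}$ supplies (3). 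Throughout, the self-linking number $sl=m-n$ gives a running consistency check: it is unchanged by conjugation and Markov moves and rises by one per inserted positive generator, so each computation must terminate at $sl(T_{3,5})=7$, $sl(T_{2,7})=5$, or $sl(T_{3,11})=19$.

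I expect the main difficulty to be the explicit word manipulations for the higher-crossing knots on four or five strands, such as $10_{53}$, $10_{55}$, $10_{63}$, $10_{101}$, $10_{149}$ and $m(10_{157})$, which must be massaged into torus-knot normal form. Two constraints demand care: the cobordism is symplectic only when the added letters are \emph{positive} generators, so any cancellation of the conjugators $w_{i}^{\pm1}$ must be arranged by conjugation and destabilization rather than by inserting inverse generators; and a destabilization $\sim_{D}$ is legitimate only when the outermost strand meets a single crossing, a shape that must be produced by conjugation beforehand. A final subtlety is that part (1) contains knots absent from part (2), namely $10_{53}$, $10_{55}$, $10_{63}$, $10_{101}$ and $m(10_{145})$; for these a cobordism factoring through $T_{3,5}$ appears unavailable, so one should instead route them to an intermediate torus knot such as $T_{3,8}$ or $T_{5,6}$ that still yields a degree-$6$ hat after passing to $T_{3,11}$, and locating such a route is the crux of those cases.
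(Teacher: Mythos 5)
Your strategy is exactly the one the paper uses: reduce each knot by positive band insertions, conjugations and Markov moves to a torus knot or link whose hat is already known, and glue the resulting relative symplectic cobordism in the symplectization onto that hat. Your bookkeeping is also sound (the self-linking check, the observation that a chain into $T_{3,5}$ simultaneously yields the bidegree-$(3,3)$ hat and, after $T_{3,5}\nearrow_{12}T_{3,11}$, the degree-$6$ hat, and the warning about when $\sim_{D}$ is legal). However, what you have written is a plan rather than a proof: the entire mathematical content of this lemma is the collection of explicit braid-word cobordisms, one for each of the twenty-one knots, and you exhibit none of them. You even flag the hardest part --- routing $10_{53}$, $10_{55}$, $10_{63}$, $10_{101}$ and $m(10_{145})$, which cannot factor through $T_{3,5}$ --- as ``the crux of those cases'' and leave it open. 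Until those word manipulations are actually carried out and checked (each positive insertion, each application of the braid relation, each destabilization), the lemma is not proved.

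For comparison, the paper's computations terminate at the following targets: $5_{1}=T_{2,5}$ and $7_{3}\to T_{2,7}$; $7_{5}$ and $8_{15}\to T_{3,4}$ (not $T_{2,7}$ as you propose --- $T_{3,4}$ also wears a degree-$4$ projective hat by the result of \cite{flmn} quoted at the start of the section, and this is how part (3) is obtained for those two knots); $9_{4}$, $9_{7}$, $9_{10}$, $9_{13}$, $9_{18}$, $9_{23}$, $9_{38}$, $9_{49}$, $10_{120}\to T_{3,5}$; $10_{127}$, $10_{149}$, $m(10_{157})\to T_{2,8}$, which reaches $T_{3,5}$ by Lemma~\ref{cobordism1}; and for the five knots absent from part (2), $10_{53}$, $10_{55}$, $10_{63}$, $10_{101}\to T_{2,10}$ and $m(10_{145})\to T_{3,7}$, which then feed into $T_{3,11}$ for the degree-$6$ hat (your guessed intermediates $T_{3,8}$ and $T_{5,6}$ are not the ones used). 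Note that $T_{2,10}$ and $T_{3,7}$ have self-linking numbers $9$ and $13$, exceeding $sl(T_{3,5})=7$, which is precisely why those knots cannot appear in part (2); your own $sl$-monotonicity check would have told you this, and it is worth making that exclusion explicit rather than leaving it as an apparent asymmetry in the statement.
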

\begin{proof} We construct relative symplectic cobordisms as follows. The lemma follows by the same argument as that in the proof of Lemma~\ref{cobordism2}.

$5_{1}=\sigma _{1}^{5}=T_{2,5}$.

$7_{3}=\sigma^{4}_{1}\sigma_{2}\cdot\sigma_{2}\sigma_{1}\sigma_{2}^{-1}\nearrow_{1}\sigma^{4}_{1}\sigma_{2}^{2}\sigma_{1}\sim \sigma^{5}_{1}\sigma_{2}^{2}\nearrow_{1}\sigma^{5}_{1}\sigma_{2}\sigma_{1}\sigma_{2}=\sigma^{5}_{1}\sigma_{1}\sigma_{2}\sigma_{1}\sim_{D}\sigma_{1}^{7}=T_{2,7}$.

$7_{5}=\sigma^{3}_{1}\sigma_{2}\cdot\sigma_{2}\sigma_{1}^{2}\sigma^{-1}_{2}\nearrow_{2}\sigma_{1}^{3}\sigma_{2}^{2}\sigma_{1}^{2}\sigma_{2}\sim\sigma_{1}\sigma^{2}_{2}\sigma^{2}_{1}\sigma_2\sigma_{1}^{2}=\sigma_{1}\sigma^{2}_{2}(\sigma_{1}\sigma_{2})^{2}\sigma_{1}=(\sigma_{1}\sigma_{2})^4=T_{3,4}$.


$8_{15}=\sigma_{1}\cdot\sigma_{3}\sigma_{2}^{2}\sigma_{3}^{-1}\cdot\sigma_{3}\sigma_{2}\sigma_{1}^{2}\sigma^{-1}_{2}\sigma^{-1}_{3}\cdot\sigma_{2}\sigma_{3}\nearrow_{1}\sigma_{1}\sigma_{3}\sigma_{2}^{3}\sigma_{1}^{2}\sigma_{3}\sim\sigma_{1}^{2}\sigma_{2}^{3}\sigma_{1}\sigma_{3}^{2}\nearrow_{1}\sigma_{1}^{2}\sigma_{2}^{3}\sigma_{1}\sigma_{3}\sigma_{2}\sigma_{3}$\\
$=\sigma_{1}^{2}\sigma_{2}^{3}\sigma_{1}\sigma_{2}\sigma_{3}\sigma_{2}\sim_{D}\sigma_{1}^{2}\sigma_{2}^{3}\sigma_{1}\sigma_{2}^{2}=\sigma_{1}^{2}\sigma_{2}^{2}(\sigma_{1}\sigma_{2})^{2}\sim\sigma_{1}\sigma_{2}^{2}(\sigma_{1}\sigma_{2})^{2}\sigma_{1}=(\sigma_{1}\sigma_{2})^{4}=T_{3,4}$.


$9_{4}=\sigma^{4}_{1}\cdot\sigma_{3}\sigma_{2}\sigma^{-1}_{3}\cdot\sigma_{2}\sigma_{1}\sigma^{-1}_{2}\cdot\sigma_{3}\nearrow_{2}\sigma^{4}_{1}\sigma_{3}\sigma_{2}^{2}\sigma_{1}\sigma_{3}\sim\sigma^{5}_{1}\sigma_{2}^{2}\sigma_{3}^{2}\nearrow_{1}\sigma^{5}_{1}\sigma_{2}^{2}\sigma_{3}\sigma_{2}\sigma_{3}=\sigma^{5}_{1}\sigma_{2}^{3}\sigma_{3}\sigma_{2}$\\
$\sim_{D}\sigma^{5}_{1}\sigma_{2}^{4}\sim\sigma_{2}\sigma_{1}^{5}\sigma_{2}^{3}\nearrow_{1}\sigma_{2}\sigma_{1}^{3}\sigma_{2}\sigma_{1}^{2}\sigma_{2}^{3}=\sigma_{2}\sigma_{1}(\sigma_{1}\sigma_{2})^{3}\sigma_{2}^{2}=\sigma_{2}(\sigma_{1}\sigma_{2})^{4}\sigma_{2}=(\sigma_{1}\sigma_{2})^{5}=T_{3,5}$.


$9_{7}=\sigma^{3}_{1}\cdot\sigma_{3}\sigma_{2}\sigma^{-1}_{3}\cdot\sigma_{2}\sigma_{1}\sigma^{-1}_{2}\cdot\sigma^{2}_{3}\nearrow_{2}\sigma^{3}_{1}\sigma_{3}\sigma_{2}^{2}\sigma_{1}\sigma^{2}_{3}\sim\sigma_{1}^{4}\sigma_{2}^{2}\sigma_{3}^{3}\nearrow_{1}\sigma_{1}^{4}\sigma_{2}^{2}\sigma_{3}\sigma_{2}\sigma_{3}^{2}=\sigma_{1}^{4}\sigma_{2}^{4}\sigma_{3}\sigma_{2}$\\
$\sim_{D}\sigma_{1}^{4}\sigma_{2}^{5}\sim\sigma_{1}^{3}\sigma_{2}^{5}\sigma_{1}\nearrow_{1}\sigma_{1}^{3}\sigma_{2}^{2}\sigma_{1}\sigma_{2}^{3}\sigma_{1}=\sigma_{1}^{2}(\sigma_{1}\sigma_{2})^{3}\sigma_{2}\sigma_{1}=\sigma_{1}(\sigma_{1}\sigma_{2})^{4}\sigma_{1}=(\sigma_{1}\sigma_{2})^{5}=T_{3,5}$.


$9_{10}=\sigma_{1}\cdot\sigma_{3}\sigma_{2}\sigma_{3}^{-1}\cdot\sigma_{3}\sigma_{2}\sigma_{1}^{3}\sigma_{2}^{-1}\sigma_{3}^{-1}\cdot\sigma_{2}\sigma_{1}\sigma_{2}^{-1}\cdot\sigma_{3}\nearrow_{2}\sigma_{1}\sigma_{3}\sigma_{2}^{2}\sigma_{1}^{4}\sigma_{3}\sim\sigma^{5}_{1}\sigma_{2}^{2}\sigma_{3}^{2}$\\
$\nearrow_{1}\sigma^{5}_{1}\sigma_{2}^{2}\sigma_{3}\sigma_{2}\sigma_{3}=\sigma^{5}_{1}\sigma_{2}^{3}\sigma_{3}\sigma_{2}\sim_{D}\sigma^{5}_{1}\sigma_{2}^{4}\sim\sigma_{2}\sigma_{1}^{5}\sigma_{2}^{3}\nearrow_{1}\sigma_{2}\sigma_{1}^{3}\sigma_{2}\sigma_{1}^{2}\sigma_{2}^{3}=\sigma_{2}\sigma_{1}(\sigma_{1}\sigma_{2})^{3}\sigma_{2}^{2}$\\
$=\sigma_{2}(\sigma_{1}\sigma_{2})^{4}\sigma_{2}=(\sigma_{1}\sigma_{2})^{5}=T_{3,5}$.


$9_{13}=\sigma_{1}^{3}\cdot\sigma_{3}\sigma_{2}\sigma_{3}^{-1}\cdot\sigma_{3}\sigma_{2}\sigma_{1}\sigma_{2}^{-1}\sigma_{3}^{-1}\cdot\sigma_{2}\sigma_{1}\sigma_{2}^{-1}\cdot\sigma_{3}\nearrow_{2}\sigma_{1}^{3}\sigma_{3}\sigma_{2}^{2}\sigma_{1}^{2}\sigma_{3}\sim\sigma^{5}_{1}\sigma_{2}^{2}\sigma_{3}^{2}$\\
$\nearrow_{1}\sigma^{5}_{1}\sigma_{2}^{2}\sigma_{3}\sigma_{2}\sigma_{3}=\sigma^{5}_{1}\sigma_{2}^{3}\sigma_{3}\sigma_{2}\sim_{D}\sigma^{5}_{1}\sigma_{2}^{4}\sim\sigma_{2}\sigma_{1}^{5}\sigma_{2}^{3}\nearrow_{1}\sigma_{2}\sigma_{1}^{3}\sigma_{2}\sigma_{1}^{2}\sigma_{2}^{3}=\sigma_{2}\sigma_{1}(\sigma_{1}\sigma_{2})^{3}\sigma_{2}^{2}$\\
$=\sigma_{2}(\sigma_{1}\sigma_{2})^{4}\sigma_{2}=(\sigma_{1}\sigma_{2})^{5}=T_{3,5}$.


$9_{18}=\sigma_{1}^{2}\cdot\sigma_{3}\sigma_{2}\sigma_{3}^{-1}\cdot\sigma_{3}\sigma_{2}\sigma_{1}^{2}\sigma_{2}^{-1}\sigma_{3}^{-1}\cdot\sigma_{2}\sigma_{1}\sigma_{2}^{-1}\cdot\sigma_{3}\nearrow_{2}\sigma_{1}^{2}\sigma_{3}\sigma_{2}^{2}\sigma_{1}^{3}\sigma_{3}\sim\sigma^{5}_{1}\sigma_{2}^{2}\sigma_{3}^{2}$\\
$\nearrow_{1}\sigma^{5}_{1}\sigma_{2}^{2}\sigma_{3}\sigma_{2}\sigma_{3}=\sigma^{5}_{1}\sigma_{2}^{3}\sigma_{3}\sigma_{2}\sim_{D}\sigma^{5}_{1}\sigma_{2}^{4}\sim\sigma_{2}\sigma_{1}^{5}\sigma_{2}^{3}\nearrow_{1}\sigma_{2}\sigma_{1}^{3}\sigma_{2}\sigma_{1}^{2}\sigma_{2}^{3}=\sigma_{2}\sigma_{1}(\sigma_{1}\sigma_{2})^{3}\sigma_{2}^{2}$\\
$=\sigma_{2}(\sigma_{1}\sigma_{2})^{4}\sigma_{2}=(\sigma_{1}\sigma_{2})^{5}=T_{3,5}$.


$9_{23}=\sigma_{1}^{2}\cdot\sigma_{3}\sigma_{2}\sigma_{3}^{-1}\cdot\sigma_{3}\sigma_{2}\sigma_{1}\sigma_{2}^{-1}\sigma_{3}^{-1}\cdot\sigma_{2}\sigma_{1}\sigma_{2}^{-1}\cdot\sigma_{3}^{2}\nearrow_{2}\sigma_{1}^{2}\sigma_{3}\sigma_{2}^{2}\sigma_{1}^{2}\sigma_{3}^{2}\sim\sigma_{1}^{4}\sigma_{2}^{2}\sigma_{3}^{3}$\\
$\nearrow_{1}\sigma_{1}^{4}\sigma_{2}^{2}\sigma_{3}\sigma_{2}\sigma_{3}^{2}=\sigma_{1}^{4}\sigma_{2}^{4}\sigma_{3}\sigma_{2}\sim_{D}\sigma_{1}^{4}\sigma_{2}^{5}\sim\sigma_{1}^{3}\sigma_{2}^{5}\sigma_{1}\nearrow_{1}\sigma_{1}^{3}\sigma_{2}^{2}\sigma_{1}\sigma_{2}^{3}\sigma_{1}=\sigma_{1}^{2}(\sigma_{1}\sigma_{2})^{3}\sigma_{2}\sigma_{1}$\\
$=\sigma_{1}(\sigma_{1}\sigma_{2})^{4}\sigma_{1}=(\sigma_{1}\sigma_{2})^{5}=T_{3,5}$.

$9_{38}=\sigma_{1}\cdot\sigma_{3}\sigma_{2}^{2}\sigma_{3}^{-1}\cdot\sigma_{2}\cdot\sigma_{2}\sigma_{1}\sigma_{2}^{-1}\cdot\sigma_{3}\cdot\sigma_{3}\sigma_{2}\sigma_{1}\sigma_{2}^{-1}\sigma_{3}^{-1}\nearrow_{3}\sigma_{1}\sigma_{3}\sigma_{2}^{4}\sigma_{1}\sigma_{3}^{2}\sigma_{2}\sigma_{1}\sigma_{3}^{-1}$\\
$=\sigma_{3}\sigma_{1}\sigma_{2}^{4}\sigma_{3}^{2}\sigma_{1}\sigma_{2}\sigma_{1}\sigma_{3}^{-1}\sim\sigma_{2}^{4}\sigma_{3}^{2}\sigma_{1}\sigma_{2}\sigma_{1}^{2}=\sigma_{2}^{4}\sigma_{3}^{2}\sigma_{2}^{2}\sigma_{1}\sigma_{2}\sim_{D}\sigma_{2}^{4}\sigma_{3}^{2}\sigma_{2}^{3}\nearrow_{1}\sigma_{2}^{2}\sigma_{3}\sigma_{2}^{2}\sigma_{3}^{2}\sigma_{2}^{3}$\\
$=\sigma_{2}(\sigma_{3}\sigma_{2})^{2}\sigma_{3}^{2}\sigma_{2}^{3}=(\sigma_{3}\sigma_{2})^{4}\sigma_{2}^{2}\sim\sigma_{2}(\sigma_{3}\sigma_{2})^{4}\sigma_{2}=(\sigma_{3}\sigma_{2})^{5}=T_{3,5}$.

$9_{49}=\sigma_{1}\cdot\sigma_{3}\sigma_{2}\sigma_{3}^{-1}\cdot\sigma_{1}^{2}\sigma_{2}\cdot\sigma_{2}\sigma_{1}\sigma_{2}^{-1}\cdot\sigma_{3}\nearrow_{2}\sigma_{1}\sigma_{3}\sigma_{2}\sigma_{1}^{2}\sigma_{2}^{2}\sigma_{1}\sigma_{3}\sim\sigma_{1}^{2}\sigma_{2}\sigma_{1}^{2}\sigma_{2}^{2}\sigma_{3}^{2}$\\
$\nearrow_{1}\sigma_{1}^{2}\sigma_{2}\sigma_{1}^{2}\sigma_{2}^{2}\sigma_{3}\sigma_{2}\sigma_{3}=\sigma_{1}^{2}\sigma_{2}\sigma_{1}^{2}\sigma_{2}^{3}\sigma_{3}\sigma_{2}\sim_{D}\sigma_{1}^{2}\sigma_{2}\sigma_{1}^{2}\sigma_{2}^{4}=(\sigma_{1}\sigma_{2})^{3}\sigma_{2}^{3}\nearrow_{1}(\sigma_{1}\sigma_{2})^{3}\sigma_{2}\sigma_{1}\sigma_{2}^{2}$\\
$=(\sigma_{1}\sigma_{2})^{5}=T_{3,5}$.

$10_{53}=\sigma_{1}\cdot\sigma_{3}\sigma_{2}\sigma_{3}^{-1}\cdot\sigma_{3}\sigma_{2}\sigma_{1}\sigma_{2}^{-1}\sigma_{3}^{-1}\cdot\sigma_{2}\sigma_{1}\sigma_{2}^{-1}\cdot\sigma_{4}\sigma_{3}^{2}\sigma_{4}^{-1}\cdot\sigma_{3}\sigma_{4}\nearrow_{3}\sigma_{1}\sigma_{3}\sigma_{2}^{2}\sigma_{1}^{2}\sigma_{4}\sigma_{3}^{3}\sigma_{4}$\\
$\sim\sigma_{4}\sigma_{3}\sigma_{4}\sigma_{2}^{2}\sigma_{1}^{3}\sigma_{3}^{3}=\sigma_{3}\sigma_{4}\sigma_{3}\sigma_{2}^{2}\sigma_{1}^{3}\sigma_{3}^{3}\sim_{D}\sigma_{3}^{2}\sigma_{2}^{2}\sigma_{1}^{3}\sigma_{3}^{3}\sim\sigma_{1}^{3}\sigma_{2}^{2}\sigma_{3}^{5}\nearrow_{1}\sigma_{1}\sigma_{2}\sigma_{1}^{2}\sigma_{2}^{2}\sigma_{3}^{5}$\\
$=\sigma_{2}^{2}\sigma_{1}\sigma_{2}\sigma_{2}^{2}\sigma_{3}^{5}\sim_{D}\sigma_{2}^{5}\sigma_{3}^{5}\nearrow_{1}\sigma_{2}^{5}\sigma_{3}\sigma_{2}\sigma_{3}^{4}=\sigma_{2}^{5}\sigma_{2}^{4}\sigma_{3}\sigma_{2}\sim_{D}\sigma_{2}^{10}=T_{2,10}$.

$10_{55}=\sigma^{2}_{1}\cdot\sigma_{3}\sigma_{2}\sigma^{-1}_{3}\cdot\sigma_{2}\sigma_{1}\sigma^{-1}_{2}\cdot\sigma_{4}\sigma_{3}^{2}\sigma^{-1}_{4}\cdot\sigma_{3}\sigma_{4}\nearrow_{3}\sigma^{2}_{1}\sigma_{3}\sigma_{2}^{2}\sigma_{1}\sigma_{4}\sigma^{3}_{3}\sigma_{4}\sim\sigma_{1}^{2}\sigma_{4}\sigma_{3}\sigma_{4}\sigma_{2}^{2}\sigma_{1}\sigma_{3}^{3}$\\
$=\sigma_{1}^{2}\sigma_{3}\sigma_{4}\sigma_{3}\sigma_{2}^{2}\sigma_{1}\sigma_{3}^{3}\sim_{D}\sigma_{1}^{2}\sigma_{3}^{2}\sigma_{2}^{2}\sigma_{1}\sigma_{3}^{3}\sim\sigma_{1}^{3}\sigma_{2}^{2}\sigma_{3}^{5}\nearrow_{1}\sigma_{1}\sigma_{2}\sigma_{1}^{2}\sigma_{2}^{2}\sigma_{3}^{5}=\sigma_{2}^{2}\sigma_{1}\sigma_{2}^{3}\sigma_{3}^{5}\sim_{D}\sigma_{2}^{5}\sigma_{3}^{5}$\\
$\nearrow_{1}\sigma_{2}^{5}\sigma_{3}\sigma_{2}\sigma_{3}^{4}=\sigma_{2}^{5}\sigma_{2}^{4}\sigma_{3}\sigma_{2}\sim_{D}\sigma_{2}^{10}=T_{2,10}$.

$10_{63}=\sigma^{2}_{1}\cdot\sigma_{4}\sigma_{3}\sigma_{2}\sigma_{1}\sigma^{-1}_{2}\sigma^{-1}_{3}\sigma^{-1}_{4}\cdot\sigma_{2}\cdot\sigma_{2}\sigma_{1}^{2}\sigma^{-1}_{2}\cdot\sigma_{3}\sigma_{4}\nearrow_{3}\sigma^{2}_{1}\sigma_{4}\sigma_{3}\sigma_{2}\sigma_{1}\sigma_{2}\sigma_{1}^{2}\sigma_{3}\sigma_{4}$\\
$\sim \sigma^{2}_{1}\sigma_{3}\sigma_{2}\sigma_{1}\sigma_{2}\sigma_{1}^{2}\sigma_{3}\sigma_{4}^{2}\nearrow_{1}\sigma^{2}_{1}\sigma_{3}\sigma_{2}\sigma_{1}\sigma_{2}\sigma_{1}^{2}\sigma_{3}\sigma_{4}\sigma_{3}\sigma_{4}=\sigma^{2}_{1}\sigma_{3}\sigma_{2}\sigma_{1}\sigma_{2}\sigma_{1}^{2}\sigma_{3}\sigma_{3}\sigma_{4}\sigma_{3}$\\
$\sim_{D}\sigma^{2}_{1}\sigma_{3}\sigma_{2}\sigma_{1}\sigma_{2}\sigma_{1}^{2}\sigma_{3}^{3}=\sigma^{2}_{1}\sigma_{3}\sigma_{1}\sigma_{2}\sigma_{1}\sigma_{1}^{2}\sigma_{3}^{3}=\sigma^{3}_{1}\sigma_{3}\sigma_{2}\sigma_{3}^{3}\sigma_{1}^{3}=\sigma^{3}_{1}\sigma_{2}^{3}\sigma_{3}\sigma_{2}\sigma_{1}^{3}\sim _{D}\sigma _{1}^{3}\sigma _{2}^{4}\sigma _{1}^{3}$\\
$\sim \sigma _{1}^{6}\sigma _{2}^{4} \nearrow_{1}\sigma _{1}^{6}\sigma _{2}\sigma _{1}\sigma _{2}^{3}=\sigma _{1}^{6}\sigma _{1}^{3}\sigma _{2}\sigma _{1}\sim _{D}\sigma _{1}^{10}=T_{2,10}$.


$10_{101}=\sigma_{1}^{2}\sigma_{2}\sigma_{3}\cdot\sigma_{3}\sigma_{2}\sigma_{1}\sigma_{2}^{-1}\sigma_{3}^{-1}\cdot\sigma_{2}\sigma_{1}\sigma_{2}^{-1}\cdot\sigma_{4}\cdot\sigma_{4}\sigma_{3}\sigma_{4}^{-1}\nearrow_{3}\sigma_{1}^{2}\sigma_{2}\sigma_{3}^{2}\sigma_{2}\sigma_{1}^{2}\sigma_{4}^{2}\sigma_{3}$\\
$=\sigma_{1}^{2}\sigma_{2}\sigma_{3}^{2}\sigma_{4}^{2}\sigma_{2}\sigma_{3}\sigma_{1}^{2}\sim\sigma_{1}^{4}\sigma_{2}\sigma_{3}^{2}\sigma_{4}^{2}\sigma_{2}\sigma_{3}\nearrow_{1}\sigma_{1}^{4}\sigma_{2}\sigma_{3}^{2}\sigma_{4}\sigma_{3}\sigma_{4}\sigma_{2}\sigma_{3}=\sigma_{1}^{4}\sigma_{2}\sigma_{3}^{3}\sigma_{4}\sigma_{3}\sigma_{2}\sigma_{3}$\\
$\sim_{D}\sigma_{1}^{4}\sigma_{2}\sigma_{3}^{4}\sigma_{2}\sigma_{3}=\sigma_{1}^{4}\sigma_{2}^{2}\sigma_{3}\sigma_{2}^{4}\sim_{D}\sigma_{1}^{4}\sigma_{2}^{6}\nearrow_{1}\sigma_{1}^{4}\sigma_{2}\sigma_{1}\sigma_{2}^{5}=\sigma_{1}^{9}\sigma_{2}\sigma_{1}\sim_{D}\sigma_{1}^{10}=T_{2,10}$.

$10_{120}=\sigma_{1}\cdot\sigma_{3}\sigma_{2}\sigma_{3}^{-1}\cdot\sigma_{4}\sigma_{3}\sigma_{2}\sigma_{1}\sigma_{2}^{-1}\sigma_{3}^{-1}\sigma_{4}^{-1}\cdot\sigma_{2}\cdot\sigma_{2}\sigma_{1}\sigma_{2}^{-1}\cdot\sigma_{4}\sigma_{3}\sigma_{4}^{-1}\cdot\sigma_{3}\sigma_{4}$\\
$\nearrow_{3}\sigma_{1}\sigma_{3}\sigma_{2}\sigma_{4}\sigma_{3}\sigma_{2}\sigma_{1}\sigma_{2}^{-1}\sigma_{4}^{-1}\sigma_{2}^{2}\sigma_{1}\sigma_{2}^{-1}\sigma_{4}\sigma_{3}^{2}\sigma_{4}=\sigma_{1}\sigma_{3}\sigma_{2}\sigma_{4}\sigma_{3}\sigma_{2}\sigma_{1}\sigma_{2}\sigma_{1}\sigma_{2}^{-1}\sigma_{3}^{2}\sigma_{4}$\\
$=\sigma_{1}\sigma_{3}\sigma_{2}\sigma_{4}\sigma_{3}\sigma_{2}^{2}\sigma_{1}\sigma_{3}^{2}\sigma_{4}\sim\sigma_{4}\sigma_{1}\sigma_{3}\sigma_{2}\sigma_{4}\sigma_{3}\sigma_{2}^{2}\sigma_{1}\sigma_{3}^{2}=\sigma_{1}\sigma_{4}\sigma_{3}\sigma_{4}\sigma_{2}\sigma_{3}\sigma_{2}^{2}\sigma_{1}\sigma_{3}^{2}$\\
$=\sigma_{1}\sigma_{3}\sigma_{4}\sigma_{3}\sigma_{2}\sigma_{3}\sigma_{2}^{2}\sigma_{1}\sigma_{3}^{2}\sim_{D}\sigma_{1}\sigma_{3}^{2}\sigma_{2}\sigma_{3}\sigma_{2}^{2}\sigma_{1}\sigma_{3}^{2}\sim\sigma_{3}^{4}\sigma_{2}\sigma_{3}\sigma_{2}^{2}\sigma_{1}^{2}=\sigma_{2}\sigma_{3}\sigma_{2}^{6}\sigma_{1}^{2}\sim_{D}\sigma_{2}^{7}\sigma_{1}^{2}$\\
$\nearrow_{1}\sigma_{2}^{5}\sigma_{1}\sigma_{2}^{2}\sigma_{1}^{2}=\sigma_{2}^{3}(\sigma_{2}\sigma_{1})^{3}\sigma_{1}=\sigma_{2}^{2}(\sigma_{2}\sigma_{1})^{4}\sim\sigma_{2}(\sigma_{2}\sigma_{1})^{4}\sigma_{2}=(\sigma_{2}\sigma_{1})^{5}=T_{3,5}$.

$10_{127}=\sigma^{3}_{1}\cdot\sigma^{-1}_{2}\sigma_{1}\sigma_{2}\cdot\sigma_{2}\sigma_{1}^{2}\sigma^{-1}_{2}\nearrow_{3}\sigma^{4}_{1}\sigma_{2}^{2}\sigma_{1}\sigma_{2}\sigma_{1}=\sigma^{4}_{1}\sigma_{1}\sigma_{2}\sigma_{1}^{2}\sigma_{1}\sim _{D}\sigma^{8}_{1}=T_{2,8}$.

$m(10_{145})=\sigma_{3}\sigma_{2}\cdot\sigma_{2}\sigma_{1}\sigma^{-1}_{2}\cdot\sigma^{2}_{3}\sigma_{2}\cdot\sigma_{2}\sigma_{1}\sigma^{-1}_{2}\nearrow_{3}\sigma_{3}\sigma_{2}\sigma_{1}\sigma_{2}\sigma_{1}\sigma^{2}_{3}\sigma^{2}_{2}\sigma_{1}=\sigma_{3}\sigma_{1}\sigma_{2}\sigma_{1}^{2}\sigma^{2}_{3}\sigma^{2}_{2}\sigma_{1}$\\
$=\sigma_{1}\sigma_{3}\sigma_{2}\sigma_{3}^{2}\sigma_{1}^{2}\sigma^{2}_{2}\sigma_{1}=\sigma_{1}\sigma_{2}^{2}\sigma_{3}\sigma_{2}\sigma_{1}^{2}\sigma^{2}_{2}\sigma_{1}\sim_{D}\sigma_{1}\sigma_{2}^{3}\sigma_{1}^{2}\sigma^{2}_{2}\sigma_{1}\nearrow_{5}(\sigma_{1}\sigma_{2})^{7}=T_{3,7}$.

$10_{149}=\sigma_{2}\sigma_{1}^{3}\sigma_{2}^{-1}\cdot\sigma_{2}^{-1}\sigma_{1}^{2}\sigma_{2}\cdot\sigma_{2}\nearrow_{2}\sigma_{2}\sigma_{1}^{5}\sigma_{2}^{2}\sim\sigma_{1}^{5}\sigma_{2}^{3}\nearrow_{1}\sigma_{1}^{5}\sigma_{2}\sigma_{1}\sigma_{2}^{2}=\sigma_{1}^{5}\sigma_{1}^{2}\sigma_{2}\sigma_{1}\sim_{D}\sigma_{1}^{8}$\\
$=T_{2,8}$.

$m(10_{157})=\sigma_{1}\cdot\sigma_{2}^{-1}\sigma_{1}^{2}\sigma_{2}\cdot\sigma_{2}\cdot\sigma_{2}\sigma_{1}^{2}\sigma_{2}^{-1}\nearrow_{2}\sigma_{1}^{3}\sigma_{2}^{3}\sigma_{1}^{2}\sim\sigma_{1}^{5}\sigma_{2}^{3}\nearrow_{1}\sigma_{1}^{5}\sigma_{2}\sigma_{1}\sigma_{2}^{2}=\sigma_{1}^{5}\sigma_{1}^{2}\sigma_{2}\sigma_{1}$\\
$\sim_{D}\sigma_{1}^{8}=T_{2,8}$.
\end{proof}

\begin{lemma}\label{cobordism4}
The quasi-positive representative of the knot $K$ in the knot table wears 

(1) a projective hat of degree-$6$ if $K$ is either $7_1$,  $8_{19}$, $9_3$, $9_6$, $9_{9}$, $9_{16}$, $10_{49}$,  $10_{66}$, $10_{80}$, $10_{128}$, $10_{134}$,  $10_{142}$, $10_{154}$ or $10_{161}$;

(2) a Hirzebruch hat of bidegree-$(3,3)$ if $K$ is either $7_1$,  $8_{19}$, $9_3$, $9_6$, $9_{9}$,      $10_{128}$ or $10_{134}$.


All these transverse knots have self-linking numbers  $5$.  
\end{lemma}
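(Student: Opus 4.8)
The plan is to prove both parts by the braid calculus of Lemmas~\ref{cobordism1}--\ref{cobordism3}. For each listed knot $K$ I would take its quasi-positive braid word from Knotinfo \cite{lm} and reduce it, through a sequence of conjugations $\sim$, Markov (de)stabilizations $\sim_S,\sim_D$, and positive-generator insertions $\nearrow_k$, to a torus knot already known to wear the desired hat. Only the insertions change the transverse type; by \cite[Lemma 2.8]{eg} each one is realized by a relative symplectic cobordism, whereas conjugations and (de)stabilizations give product cobordisms. Composing these elementary pieces yields a relative symplectic cobordism from $K$ at the bottom to the target torus knot at the top.

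The hat is then transferred downward: if $K$ is relatively symplectic cobordant to a transverse knot $K_+$ bounding a hat $\hat F$ in a cap $C$, then gluing the cobordism to $\hat F$ along $K_+$ and absorbing the symplectization collar produces a hat for $K$ in $C$ of the \emph{same} degree or bidegree, since the collar contributes no second homology. For part~(2) I would reduce each of $7_1=T_{2,7}$, $8_{19}=T_{3,4}$, $9_3$, $9_6$, $9_9$, $10_{128}$ and $10_{134}$ to $T_{3,5}$, which carries a Hirzebruch hat of bidegree $(3,3)$ by \cite[Proposition 4.10]{eg}; the two torus-knot cases are immediate, since $T_{2,7}\nearrow_1 T_{2,8}$ and $T_{2,8}$ is relatively cobordant to $T_{3,5}$ by Lemma~\ref{cobordism1}, while $(\sigma_1\sigma_2)^4\nearrow_2(\sigma_1\sigma_2)^5=T_{3,5}$.

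For part~(1) I would reduce every knot to a torus knot that is itself relatively cobordant to $T_{3,11}$. Since $T_{3,5}\nearrow_{12}T_{3,11}$, the seven knots handled in part~(2) qualify automatically, and the remaining $9_{16}$, $10_{49}$, $10_{66}$, $10_{80}$, $10_{142}$, $10_{154}$, $10_{161}$ would be reduced to torus knots (such as $T_{2,10}$, $T_{3,7}$ or $T_{3,8}$) that chain up to $T_{3,11}$ by further positive insertions, exactly as in the proof of Lemma~\ref{cobordism3}. As $T_{3,11}$ wears a projective hat of degree $6$ by \cite{flmn}, so does each $K$. Finally, every braid word produced has $n-m=-5$, so the remark following Theorem~\ref{rfoldcover} gives $sl(K)=m-n=5$, settling the last assertion.

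The genuine difficulty is entirely computational: there is no algorithm for discovering the reduction sequences, so each knot demands an ad hoc chain of braid relations and insertions. The crux is to place each $\nearrow_k$ so that the braid relation $\sigma_i\sigma_{i+1}\sigma_i=\sigma_{i+1}\sigma_i\sigma_{i+1}$, together with well-chosen conjugations and destabilizations, collapses the word to a recognizable torus form; for the longer, higher-strand representatives (notably $10_{128}$ and $10_{134}$) these sequences are correspondingly long and must essentially be found by hand.
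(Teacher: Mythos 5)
Your strategy is exactly the paper's: reduce each quasi-positive braid word by conjugation, Markov moves and positive-band insertions to a torus knot already carrying the desired hat, transfer the hat down the resulting relative symplectic cobordism via \cite[Lemma 2.8]{eg}, and read off $sl(K)=m-n=5$ from the braid data. Your choice of targets is also the paper's ($T_{3,5}$ for part (2), with $T_{2,7}\nearrow_1 T_{2,8}$ feeding into Lemma~\ref{cobordism1} and $(\sigma_1\sigma_2)^4\nearrow_2(\sigma_1\sigma_2)^5$ for $8_{19}$; and $T_{2,9}$, $T_{2,10}$, $T_{3,7}$ for the knots appearing only in part (1), all chained up to $T_{3,11}$).

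The one caveat is that, as you yourself flag, what you have written is a plan rather than a proof: for this lemma the fourteen explicit reduction sequences \emph{are} the entire mathematical content, and you have produced none of them. There is no a priori guarantee that a given quasi-positive representative reduces to a torus knot of the required type using only positive insertions (a wrong choice of target, or a representative with too few strands or bands, would make the scheme fail), so until the words for $9_3$, $9_6$, $9_9$, $9_{16}$, $10_{49}$, $10_{66}$, $10_{80}$, $10_{128}$, $10_{134}$, $10_{142}$, $10_{154}$ and $10_{161}$ are actually manipulated down to $T_{3,5}$, $T_{2,9}$, $T_{2,10}$ or $T_{3,7}$, the lemma is not established. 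Everything else --- the transfer of hats along cobordisms, the role of Lemma~\ref{cobordism1}, and the self-linking computation --- is correct and matches the paper.
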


\begin{proof} We construct relative symplectic cobordisms as follows. The lemma follows from the same argument as that in the proof of Lemma~\ref{cobordism2}. 


$7_{1}=\sigma _{1}^{7}=T_{2,7}$.

$8_{19}=\sigma_{1}^{3}\sigma_{2}\sigma_{1}^{3}\sigma_{2}=\sigma_{1}^{2}\sigma_{2}\sigma_{1}\sigma_{2}\sigma_{1}^{2}\sigma_{2}\sim(\sigma_{1}\sigma_{2})^{2}\sigma_{1}^{2}\sigma_{2}\sigma_{1}=(\sigma_{1}\sigma_{2})^{4}=T_{3,4}$.



$9_{3}=\sigma_{1}^{6}\sigma_{2}\cdot\sigma^{-1}_{1}\sigma_{2}\sigma_{1}\nearrow_{1}\sigma_{1}^{6}\sigma_{2}^{2}\sigma_{1}\sim\sigma_{1}^{7}\sigma_{2}^{2}\nearrow_{1}\sigma_{1}^{5}\sigma_{2}\sigma_{1}^{2}\sigma_{2}^{2}=\sigma_{1}^{3}(\sigma_{1}\sigma_{2})^{3}\sigma_{2}=\sigma_{1}^{2}(\sigma_{1}\sigma_{2})^{4}$\\
$\sim\sigma_{1}(\sigma_{1}\sigma_{2})^{4}\sigma_{1}=(\sigma_{1}\sigma_{2})^{5}=T_{3,5}$.


$9_{6}=\sigma_{1}^{5}\sigma_{2}\cdot\sigma_{2}\sigma_{1}^{2}\sigma_{2}^{-1}\nearrow_{1}\sigma_{1}^{5}\sigma_{2}^{2}\sigma_{1}^{2}\sim\sigma_{1}^{7}\sigma_{2}^{2}\nearrow_{1}\sigma_{1}^{5}\sigma_{2}\sigma_{1}^{2}\sigma_{2}^{2}=\sigma_{1}^{3}(\sigma_{1}\sigma_{2})^{3}\sigma_{2}=\sigma_{1}^{2}(\sigma_{1}\sigma_{2})^{4}$\\
$\sim\sigma_{1}(\sigma_{1}\sigma_{2})^{4}\sigma_{1}=(\sigma_{1}\sigma_{2})^{5}=T_{3,5}$.


$9_{9}=\sigma_{1}^{4}\sigma_{2}\cdot\sigma_{2}\sigma_{1}^{3}\sigma_{2}^{-1}\nearrow_{1}\sigma_{1}^{4}\sigma_{2}^{2}\sigma_{1}^{3}\sim\sigma_{1}^{7}\sigma_{2}^{2}\nearrow_{1}\sigma_{1}^{5}\sigma_{2}\sigma_{1}^{2}\sigma_{2}^{2}=\sigma_{1}^{3}(\sigma_{1}\sigma_{2})^{3}\sigma_{2}=\sigma_{1}^{2}(\sigma_{1}\sigma_{2})^{4}$\\
$\sim\sigma_{1}(\sigma_{1}\sigma_{2})^{4}\sigma_{1}=(\sigma_{1}\sigma_{2})^{5}=T_{3,5}$.

$9_{16}=\sigma^{3}_{1}\sigma^{2}_{2}\cdot\sigma_{2}\sigma_{1}^{3}\sigma_{2}^{-1}\nearrow_{1}\sigma^{3}_{1}\sigma^{3}_{2}\sigma_{1}^{3}\sim \sigma^{6}_{1}\sigma^{3}_{2}\nearrow_{1}\sigma^{6}_{1}\sigma_{2}\sigma_{1}\sigma_{2}^{2}=\sigma^{6}_{1}\sigma_{1}^{2}\sigma_{2}\sigma_{1}\sim_{D}\sigma_{1}^{9}=T_{2,9}$.


$10_{49}=\sigma_{2}\sigma_{1}^{4}\sigma^{-1}_{2}\cdot\sigma_{1}\cdot\sigma_{3}\sigma_{2}^{2}\sigma^{-1}_{3}\cdot\sigma_{2}\sigma_{3}\nearrow_{2}\sigma_{2}\sigma^{5}_{1}\sigma_{3}\sigma^{3}_{2}\sigma_{3}\sim\sigma_{3}\sigma_{2}\sigma_{3}\sigma^{5}_{1}\sigma^{3}_{2}=\sigma_{2}\sigma_{3}\sigma_{2}\sigma^{5}_{1}\sigma^{3}_{2}$\\
$\sim_{D}\sigma_{2}^{2}\sigma^{5}_{1}\sigma^{3}_{2}\sim\sigma^{5}_{1}\sigma^{5}_{2}\nearrow_{1}\sigma^{5}_{1}\sigma_{2}\sigma_{1}\sigma_{2}^{4}=\sigma^{5}_{1}\sigma_{1}^{4}\sigma_{2}\sigma_{1}\sim_{D}\sigma^{10}_{1}=T_{2,10}$.

$10_{66}=\sigma_{1}^{3}\cdot\sigma_{3}\sigma_{2}\sigma_{1}\sigma_{2}^{-1}\sigma_{3}^{-1}\cdot\sigma_{2}\cdot\sigma_{2}\sigma_{1}^{2}\sigma_{2}^{-1}\cdot\sigma_{3}^{2}\nearrow_{2}\sigma_{1}^{3}\sigma_{3}\sigma_{2}\sigma_{1}\sigma_{2}\sigma_{1}^{2}\sigma_{3}^{2}\sim\sigma_{1}^{5}\sigma_{2}\sigma_{1}\sigma_{2}\sigma_{3}^{3}$\\
$\nearrow_{1}\sigma_{1}^{5}\sigma_{2}\sigma_{1}\sigma_{2}\sigma_{3}\sigma_{2}\sigma_{3}^{2}=\sigma_{1}^{5}\sigma_{2}\sigma_{1}\sigma_{2}\sigma_{2}^{2}\sigma_{3}\sigma_{2}\sim_{D}\sigma_{1}^{5}\sigma_{2}\sigma_{1}\sigma_{2}^{4}=\sigma_{1}^{5}\sigma_{1}^{4}\sigma_{2}\sigma_{1}\sim_{D}\sigma_{1}^{10}=T_{2,10}$.

$10_{80}=\sigma_{2}\sigma_{1}^{3}\sigma_{2}^{-1}\cdot\sigma_{1}^{2}\cdot\sigma_{3}\sigma_{2}^{2}\sigma_{3}^{-1}\cdot\sigma_{2}\sigma_{3}\nearrow_{2}\sigma_{2}\sigma_{1}^{5}\sigma_{3}\sigma_{2}^{3}\sigma_{3}\sim\sigma_{3}\sigma_{2}\sigma_{3}\sigma_{1}^{5}\sigma_{2}^{3}=\sigma_{2}\sigma_{3}\sigma_{2}\sigma_{1}^{5}\sigma_{2}^{3}$\\
$\sim_{D}\sigma_{2}^{2}\sigma_{1}^{5}\sigma_{2}^{3}\sim\sigma_{1}^{5}\sigma_{2}^{5}\nearrow_{1}\sigma_{1}^{5}\sigma_{2}\sigma_{1}\sigma_{2}^{4}=\sigma_{1}^{5}\sigma_{1}^{4}\sigma_{2}\sigma_{1}\sim_{D}\sigma_{1}^{10}=T_{2,10}$.


$10_{128}=\sigma^{3}_{1}\cdot\sigma_{3}\sigma_{2}\sigma^{-1}_{3}\cdot\sigma^{2}_{1}\sigma_{3}\sigma_{2}\cdot\sigma^{-1}_{3}\sigma_{2}\sigma_{3}=\sigma^{3}_{1}\sigma_{3}\sigma_{2}\sigma^{2}_{1}\sigma_{2}\sigma^{-1}_{3}\sigma_{2}\sigma_{3}\sim\sigma^{3}_{1}\sigma_{2}\sigma^{2}_{1}\sigma_{2}\sigma^{-1}_{3}\sigma_{2}\sigma_{3}^{2}$\\
$\nearrow_{2}\sigma^{3}_{1}\sigma_{2}\sigma^{2}_{1}\sigma_{2}^{2}\sigma_{3}\sigma_{2}\sigma_{3}=\sigma^{3}_{1}\sigma_{2}\sigma^{2}_{1}\sigma_{2}^{2}\sigma_{2}\sigma_{3}\sigma_{2}\sim_{D}\sigma^{3}_{1}\sigma_{2}\sigma^{2}_{1}\sigma_{2}^{4}=\sigma^{2}_{1}\sigma_{2}\sigma_{1}\sigma_{2}\sigma_{1}\sigma_{2}^{4}$\\
$=\sigma_{1}\sigma_{2}\sigma_{1}\sigma_{2}\sigma_{1}\sigma_{2}\sigma_{1}\sigma_{2}^{3}\sim\sigma_{2}\sigma_{1}\sigma_{2}\sigma_{1}\sigma_{2}\sigma_{1}\sigma_{2}\sigma_{1}\sigma_{2}^{2}=(\sigma_{1}\sigma_{2})^{5}=T_{3,5}$.

$10_{134}=\sigma^{3}_{1}\cdot\sigma_{3}\sigma_{2}\sigma^{-1}_{3}\cdot\sigma^{2}_{1}\sigma_{2}\sigma^{2}_{3}\nearrow_{1}\sigma^{3}_{1}\sigma_{3}\sigma_{2}\sigma^{2}_{1}\sigma_{2}\sigma^{2}_{3}\sim\sigma^{3}_{1}\sigma_{2}\sigma^{2}_{1}\sigma_{2}\sigma^{3}_{3}\nearrow_{1}\sigma^{3}_{1}\sigma_{2}\sigma^{2}_{1}\sigma_{2}\sigma_{3}\sigma_{2}\sigma_{3}^{2}$\\
$=\sigma^{3}_{1}\sigma_{2}\sigma^{2}_{1}\sigma_{2}\sigma_{2}^{2}\sigma_{3}\sigma_{2}\sim_{D}\sigma^{3}_{1}\sigma_{2}\sigma^{2}_{1}\sigma_{2}^{4}=\sigma^{2}_{1}\sigma_{2}\sigma_{1}\sigma_{2}\sigma_{1}\sigma_{2}^{4}=\sigma_{1}\sigma_{2}\sigma_{1}\sigma_{2}\sigma_{1}\sigma_{2}\sigma_{1}\sigma_{2}^{3}$\\
$\sim\sigma_{2}\sigma_{1}\sigma_{2}\sigma_{1}\sigma_{2}\sigma_{1}\sigma_{2}\sigma_{1}\sigma_{2}^{2}=(\sigma_{1}\sigma_{2})^{5}=T_{3,5}$.


$10_{142}=\sigma^{3}_{1}\cdot\sigma_{3}\sigma_{2}\sigma^{-1}_{3}\cdot\sigma^{3}_{1}\sigma_{2}\sigma_{3}\sim\sigma^{3}_{1}\sigma_{2}\sigma^{3}_{1}\sigma^{-1}_{3}\sigma_{2}\sigma_{3}^{2}\nearrow_{2}\sigma^{3}_{1}\sigma_{2}\sigma^{3}_{1}\sigma_{3}\sigma_{2}\sigma_{3}^{2}=\sigma^{3}_{1}\sigma_{2}\sigma^{3}_{1}\sigma_{2}^{2}\sigma_{3}\sigma_{2}$\\
$\sim_{D}\sigma^{3}_{1}\sigma_{2}\sigma^{3}_{1}\sigma_{2}^{3}=\sigma^{2}_{1}\sigma_{2}\sigma_{1}\sigma_{2}\sigma^{2}_{1}\sigma_{2}^{3}\nearrow_{4}(\sigma_{1}\sigma_{2})^{7}=T_{3,7}$.

$10_{154}=\sigma^{2}_{1}\sigma_{2}\cdot\sigma^{-1}_{1}\sigma_{2}\sigma_{1}\cdot\sigma_{3}\sigma^{3}_{2}\sigma_{3}\nearrow_{2}\sigma^{2}_{1}\sigma_{2}^{2}\sigma_{1}\sigma_{3}\sigma_{2}\sigma_{1}\sigma_{2}^{2}\sigma_{3}=\sigma^{2}_{1}\sigma_{2}^{2}\sigma_{1}\sigma_{3}\sigma_{1}^{2}\sigma_{2}\sigma_{1}\sigma_{3}$\\
$=\sigma^{2}_{1}\sigma_{2}^{2}\sigma_{1}^{3}\sigma_{3}\sigma_{2}\sigma_{3}\sigma_{1}=\sigma^{2}_{1}\sigma_{2}^{2}\sigma_{1}^{3}\sigma_{2}\sigma_{3}\sigma_{2}\sigma_{1}\sim_{D}\sigma^{2}_{1}\sigma_{2}^{2}\sigma_{1}^{3}\sigma_{2}^{2}\sigma_{1}\sim\sigma^{3}_{1}\sigma_{2}^{2}\sigma_{1}^{3}\sigma_{2}^{2}$\\
$\nearrow_{4}\sigma_{1}^{2}\sigma_{2}\sigma_{1}\sigma_{2}\sigma_{1}\sigma_{2}\sigma_{1}\sigma_{2}\sigma_{1}(\sigma_{1}\sigma_{2})^{2}=(\sigma_{1}\sigma_{2})^{7}=T_{3,7}$.


$10_{161}=\sigma^{3}_{1}\sigma_{2}\cdot\sigma^{-1}_{1}\sigma_{2}\sigma_{1}\cdot\sigma_{1}\sigma^{2}_{2}\nearrow_{6}(\sigma_{1}\sigma_{2})^{7}=T_{3,7}$.
\end{proof}

\begin{lemma}\label{cobordism5}
The quasi-positive representatives of the knots $9_1$,   $10_{139}$ and $10_{152}$ in the knot table all wear projective hats of degree-$6$. Their self-linking numbers are all $7$.
\end{lemma}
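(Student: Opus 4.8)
The plan is to run exactly the argument of Lemmas~\ref{cobordism2}, \ref{cobordism3} and \ref{cobordism4}. Adopting the notation of the proof of Lemma~\ref{cobordism1}, for each of $9_{1}$, $10_{139}$ and $10_{152}$ I would write down its quasi-positive braid word from Knotinfo~\cite{lm} and produce an explicit chain of conjugations $\sim$, Markov (de)stabilizations $\sim_{S}$, $\sim_{D}$ and positive-generator insertions $\nearrow_{k}$ ending at a torus knot or link that already wears a degree-$6$ projective hat. By \cite[Lemma 2.8]{eg} each $\nearrow_{k}$ is a relative symplectic cobordism and the remaining moves preserve the transverse type, so gluing the cobordism to the hat on the target yields a hat on the given knot; as the cobordism sits in the symplectization it is disjoint from $l_{\infty}$, so the degree stays $6$. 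The available targets are $T_{5,6}$ and $T_{3,11}$ of \cite{flmn} together with everything cobordant to them; since $(\sigma_{1}\sigma_{2})^{p}\nearrow_{2}(\sigma_{1}\sigma_{2})^{p+1}$, every $T_{3,p}$ with $5\le p\le 11$ (including the link $T_{3,6}$) is cobordant to $T_{3,11}$ and hence admissible.

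For $9_{1}=T_{2,9}=\sigma_{1}^{9}$ the chain is short. Using the braid relation $\sigma_{1}^{2}\sigma_{2}\sigma_{1}=(\sigma_{1}\sigma_{2})^{2}$ twice, I would record
\[
\sigma_{1}^{9}\sim_{S}\sigma_{1}^{9}\sigma_{2}\sim(\sigma_{1}\sigma_{2})^{2}\sigma_{1}^{6}\nearrow_{1}(\sigma_{1}\sigma_{2})^{4}\sigma_{1}^{3}\nearrow_{1}(\sigma_{1}\sigma_{2})^{6}=T_{3,6},
\]
where each insertion drops a $\sigma_{2}$ just after the leading $\sigma_{1}^{2}$ of the tail and the relation then collapses the word. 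Since $T_{3,6}\nearrow_{10}(\sigma_{1}\sigma_{2})^{11}=T_{3,11}$, the knot $9_{1}$ wears a degree-$6$ projective hat. Note one cannot reach $T_{3,5}$ from $9_{1}$ by these moves: $T_{3,5}$ has the same self-linking number $7$ as $9_{1}=T_{2,9}$ but is a different knot ($T_{3,5}=10_{124}$), so any admissible target must have self-linking $>7$, forcing $p\ge 6$ and at least two insertions.

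For $10_{139}$ and $10_{152}$ I would take their quasi-positive braid words from \cite{lm} and apply the same reduction: clear any negative letters coming from the $w_{i}^{-1}$ in the bands by positive insertions $\nearrow$, use the braid relation $\sigma_{i}\sigma_{i+1}\sigma_{i}=\sigma_{i+1}\sigma_{i}\sigma_{i+1}$ together with Markov destabilizations $\sim_{D}$ to drop the braid index, and comb the resulting positive braid into a power $(\sigma_{1}\sigma_{2})^{p}$ with $6\le p\le 11$, exactly as for the knots in Lemma~\ref{cobordism2}. All three knots have presentations with $n-m=-7$, so by the Remark after Theorem~\ref{rfoldcover} their self-linking numbers are $m-n=7$.

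The main obstacle is the braid bookkeeping for $10_{139}$ and $10_{152}$. Unlike $9_{1}$, whose braid is already positive and folds in two steps, these two words first require the extra $\nearrow$ insertions to clear every $w_{i}^{-1}$; the delicate point is to choose the insertion positions so that, after the braid relation is applied, no stray generator survives and the word closes up cleanly onto a single $(\sigma_{1}\sigma_{2})^{p}$. Verifying that such a choice exists for each of the two knots — that is, that all the bands can be simultaneously combed into the $(\sigma_{1}\sigma_{2})$-pattern — is the one step that cannot be shortcut and must be carried out by hand.
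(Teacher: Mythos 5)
Your method is the paper's method, and your treatment of $9_{1}$ is correct and complete: the chain $\sigma_{1}^{9}\sim_{S}\sigma_{1}^{9}\sigma_{2}\sim(\sigma_{1}\sigma_{2})^{2}\sigma_{1}^{6}\nearrow_{1}(\sigma_{1}\sigma_{2})^{4}\sigma_{1}^{3}\nearrow_{1}(\sigma_{1}\sigma_{2})^{6}=T_{3,6}$ checks out, and it is in fact \emph{more} explicit than the paper, which simply records $9_{1}=\sigma_{1}^{9}=T_{2,9}$ and leaves implicit that $T_{2,9}$ is relative symplectic cobordant to $T_{3,11}$. Your count $n-m=-7$ for all three presentations, hence $sl=7$, is also right.

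The gap is that for $10_{139}$ and $10_{152}$ you never produce the braid chains; you only describe the strategy and state that the bookkeeping ``must be carried out by hand.'' For these two knots the explicit chain \emph{is} the proof --- there is no soft argument guaranteeing in advance that the combing terminates at an admissible torus knot --- so the lemma is not established for them. Moreover, the obstacle you anticipate (clearing the negative letters of the conjugating words $w_{i}^{-1}$) does not arise here: both knots are closures of honestly positive $3$-braids, $10_{139}=\sigma_{1}^{4}\sigma_{2}\sigma_{1}^{3}\sigma_{2}^{2}$ and $10_{152}=\sigma_{1}^{3}\sigma_{2}^{2}\sigma_{1}^{2}\sigma_{2}^{3}$, and the paper's chains are short: for $10_{139}$ one rewrites $\sigma_{1}^{4}\sigma_{2}\sigma_{1}^{3}\sigma_{2}^{2}=\sigma_{1}^{3}\sigma_{2}\sigma_{1}\sigma_{2}\sigma_{1}^{2}\sigma_{2}^{2}$ by the braid relation, conjugates, and applies $\nearrow_{4}$ to reach $(\sigma_{1}\sigma_{2})^{7}=T_{3,7}$; for $10_{152}$ two applications of $\nearrow_{2}$ interleaved with the braid relation and a conjugation give $(\sigma_{1}\sigma_{2})^{7}=T_{3,7}$ as well, and $T_{3,7}\nearrow_{8}T_{3,11}$. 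So the missing step is routine but genuinely missing, and your description of where the difficulty lies would send a reader looking for a complication that is not there.
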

\begin{proof} It follows from the following constructions of relative symplectic cobordisms.

$9_{1}=\sigma _{1}^{9}=T_{2,9}$.

$10_{139}=\sigma^{4}_{1}\sigma_{2}\sigma^{3}_{1}\sigma^{2}_{2}=\sigma^{3}_{1}\sigma_{2}\sigma_{1}\sigma_{2}\sigma^{2}_{1}\sigma^{2}_{2}\sim\sigma^{2}_{1}\sigma_{2}\sigma_{1}\sigma_{2}\sigma^{2}_{1}\sigma^{2}_{2}\sigma_{1}\nearrow_{4}\sigma^{2}_{1}\sigma_{2}\sigma_{1}\sigma_{2}\sigma_{1}\sigma_{2}\sigma_{1}\sigma_{2}\sigma_{1}\sigma_{2}\sigma_{1}\sigma_{2}^{2}$\\
$=(\sigma_{1}\sigma_{2})^{7}=T_{3,7}$.

$10_{152}=\sigma_{1}^{3}\sigma_{2}^{2}\sigma_{1}^{2}\sigma_{2}^{3}\nearrow_{2}\sigma_{1}^{3}\sigma_{2}\sigma_{1}^{2}\sigma_{2}\sigma_{1}^{2}\sigma_{2}^{3}=\sigma_{1}^{3}\sigma_{2}\sigma_{1}\sigma_{2}\sigma_{1}\sigma_{2}\sigma_{1}\sigma_{2}^{3}\sim\sigma_{2}^{2}\sigma_{1}^{2}(\sigma_{1}\sigma_{2})^{4}$\\
$\nearrow_{2}\sigma_{2}^{2}\sigma_{1}\sigma_{2}^{2}\sigma_{1}(\sigma_{1}\sigma_{2})^{4}=\sigma_{2}\sigma_{1}\sigma_{2}\sigma_{1}\sigma_{2}\sigma_{1}(\sigma_{1}\sigma_{2})^{4}=(\sigma_{1}\sigma_{2})^{7}=T_{3,7}$.
\end{proof}


\begin{lemma}\label{cobordism6}
The quasi-positive representatives of the links with crossing number $\leq 11$ and nonzero nullity in the link table all wear projective hats of degree-$6$. Among these transverse links, the following all wear both Hirzebruch hats of bidegree-$(3,3)$ and projective hats of degree-$4$:\\ $L10n94\{1,0\}$, $m(L10n104\{1,0,0\})$, $m(L10n104\{1,1,0\})$, $m(L11n226\{0\})$,\\  $m(L11n381\{0,0\})$,  $m(L11n381\{0,1\})$ and $L11n428\{1,0\}$. 
\end{lemma}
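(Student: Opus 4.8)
The plan is to follow verbatim the strategy of Lemmas~\ref{cobordism1}--\ref{cobordism5}: for each link $L$ in question I would exhibit an explicit relative symplectic cobordism from $L$ to one of the torus knots already known to wear the desired hat, and then invoke the inheritance of hats along such cobordisms. Concretely, if $L_{-}$ is relatively symplectic cobordant to $L_{+}$ inside a piece of the symplectization of $(S^3,\xi_{st})$ by a symplectic surface $\Sigma$, and $L_{+}$ bounds a symplectic hat $\hat F$ in a cap $C$, then gluing the collar onto $C$ and capping the top of $\Sigma$ with $\hat F$ produces a symplectic hat $\Sigma\cup\hat F$ for $L_{-}$ in the same cap $C$; since $\Sigma$ lies in the homologically trivial collar, the intersection number with $l_{\infty}$ (or with the two $\mathbb{C}P^{1}$ factors) is unchanged, so the degree (or bidegree) is preserved. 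It therefore suffices to build cobordisms to the appropriate torus knots: to $T_{5,6}$ or $T_{3,11}$ for a projective hat of degree $6$, to $T_{3,5}$ (or, by Lemma~\ref{cobordism1}, to any $T_{2,q}$ with $3\le q\le 8$) for a Hirzebruch hat of bidegree $(3,3)$, and to $T_{3,4}$ or $T_{2,7}$ for a projective hat of degree $4$.

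First I would record, for every quasi-positive link of crossing number $\le 11$ with nonzero nullity, a quasi-positive band-word representative as a closed braid, read off from the link table in \cite{lm}. The cobordisms are then assembled from the same elementary moves used throughout the previous lemmas: insertion of positive generators $\nearrow_{k}$ (each realizing a relative symplectic cobordism by \cite[Lemma 2.8]{eg}), conjugation $\sim$, Markov stabilization $\sim_{S}$ and destabilization $\sim_{D}$, together with the braid relations, which leave the closure unchanged. For the first, universal claim I would drive each such braid word, through a sequence of these moves, to one of $T_{5,6}$ or $T_{3,11}$, or more efficiently to some $T_{2,q}$ or $T_{3,q}$ already shown (in the earlier lemmas, via Lemma~\ref{cobordism1}, or in the main theorem of \cite{flmn}) to be cobordant to one of these, thereby producing the degree-$6$ projective hat.

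For the second claim I would treat the seven distinguished links separately, constructing in each case two cobordisms: one terminating at $T_{3,5}$, yielding the bidegree-$(3,3)$ Hirzebruch hat, and one terminating at $T_{3,4}$ or $T_{2,7}$, yielding the degree-$4$ projective hat. Because these links have small self-linking number, their quasi-positive braids are short and the two target torus knots are within reach of only a handful of generator insertions, exactly as for the simplest entries in Lemmas~\ref{cobordism2} and \ref{cobordism3}.

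The main obstacle is purely combinatorial rather than conceptual: beyond the hat-inheritance principle there is nothing to prove, but one must produce a correct explicit braid-word reduction for each of the many links and check at every step that only positive generators are inserted, so that the cobordism is genuinely symplectic. A secondary point requiring care, absent in the knot cases, is the bookkeeping for multi-component links: one must confirm that the Seifert data realizing the nonzero nullity is compatible with the chosen braid closure and that the destabilizations neither merge nor split components in a way that alters the link type. Verifying the input data (quasi-positivity and the claimed nullity) against the link table is the remaining routine task.
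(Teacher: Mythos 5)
Your proposal follows exactly the paper's strategy: read off a quasi-positive band presentation for each link from \cite{lm}, drive it by positive band insertions, conjugations, braid relations and Markov (de)stabilizations to a torus link $T_{2,q}$ or $T_{3,q}$ already known (via Lemma~\ref{cobordism1} and \cite{flmn}) to wear the relevant hat, and invoke the inheritance of hats along relative symplectic cobordisms; the paper's proof of Lemma~\ref{cobordism6} is precisely such a list of explicit reductions ending at $T_{3,4}$, $T_{3,6}$, $T_{3,7}$, $T_{3,8}$, $T_{2,4}$ or $T_{2,7}$. The only caveat is that the entire substantive content of this lemma \emph{is} the roughly two dozen explicit braid-word reductions, which your proposal correctly identifies as the combinatorial task but does not actually carry out, so as written it is a correct plan rather than a complete proof.
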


\begin{proof} We construct relative symplectic cobordisms as follows. 

$m(L8n6\{0,0\})=\sigma_{2}^{2}\sigma_{3}\sigma_{1}\sigma_{2}\cdot\sigma_{3}^{-1}\sigma_{2}\sigma_{3}\cdot\sigma_{1}\nearrow_{2}\sigma_{2}^{2}\sigma_{3}\sigma_{1}\sigma_{2}\sigma_{3}\sigma_{2}\sigma_{3}\sigma_{1}=\sigma_{2}^{2}\sigma_{3}\sigma_{1}\sigma_{3}\sigma_{2}\sigma_{3}^{2}\sigma_{1}$\\
$=\sigma_{2}^{2}\sigma_{3}^{2}\sigma_{1}\sigma_{2}\sigma_{1}\sigma_{3}^{2}=\sigma_{2}^{2}\sigma_{3}^{2}\sigma_{2}\sigma_{1}\sigma_{2}\sigma_{3}^{2}\sim_{D}\sigma_{2}^{2}\sigma_{3}^{2}\sigma_{2}^{2}\sigma_{3}^{2}\nearrow_{4}(\sigma_{2}\sigma_{3})^{6}=T_{3,6}$.

$L8n6\{1,0\}=\sigma_{1}\sigma_{2}^{2}\sigma_{1}^{2}\sigma_{2}^{2}\sigma_{1}\nearrow_{4}(\sigma_{1}\sigma_{2})^{6}=T_{3,6}$.

$L8n8\{1,0,1\}=m(L8n8\{0,1,1\})=\sigma_{1}\sigma_{2}^{2}\sigma_{1}\sigma_{3}\sigma_{2}^{2}\sigma_{3}\nearrow_{1}\sigma_{1}\sigma_{2}^{2}\sigma_{1}\sigma_{3}\sigma_{2}\sigma_{1}\sigma_{2}\sigma_{3}$\\
$=\sigma_{1}\sigma_{2}^{2}\sigma_{1}\sigma_{3}\sigma_{1}\sigma_{2}\sigma_{1}\sigma_{3}=\sigma_{1}\sigma_{2}^{2}\sigma_{1}^{2}\sigma_{3}\sigma_{2}\sigma_{3}\sigma_{1}=\sigma_{1}\sigma_{2}^{2}\sigma_{1}^{2}\sigma_{2}\sigma_{3}\sigma_{2}\sigma_{1}\sim_{D}\sigma_{1}\sigma_{2}^{2}\sigma_{1}^{2}\sigma_{2}^{2}\sigma_{1}$\\
$\nearrow_{4}(\sigma_{1}\sigma_{2})^{6}=T_{3,6}$.

$m(L9n18\{0\})=\sigma_{1}\sigma_{2}^{3}\sigma_{1}\sigma_{2}^{3}\sigma_{1}=\sigma_{1}\sigma_{2}^{2}\sigma_{1}\sigma_{2}\sigma_{1}\sigma_{2}^{2}\sigma_{1}\nearrow_{3}(\sigma_{1}\sigma_{2})^{6}=T_{3,6}$.

$L9n18\{1\}=\sigma_{2}\cdot\sigma_{3}^{-1}\sigma_{2}\sigma_{3}\cdot\sigma_{1}\sigma_{2}\cdot\sigma_{3}^{-1}\sigma_{2}\sigma_{3}\cdot\sigma_{1}\nearrow_{3}\sigma_{2}^{2}\sigma_{3}\sigma_{1}\sigma_{2}\sigma_{3}\sigma_{2}\sigma_{3}\sigma_{1}=\sigma_{2}^{2}\sigma_{3}\sigma_{1}\sigma_{3}\sigma_{2}\sigma_{3}^{2}\sigma_{1}$\\
$=\sigma_{2}^{2}\sigma_{3}^{2}\sigma_{1}\sigma_{2}\sigma_{1}\sigma_{3}^{2}=\sigma_{2}^{2}\sigma_{3}^{2}\sigma_{2}\sigma_{1}\sigma_{2}\sigma_{3}^{2}\sim_{D}\sigma_{2}^{2}\sigma_{3}^{2}\sigma_{2}^{2}\sigma_{3}^{2}\nearrow_{4}(\sigma_{2}\sigma_{3})^{6}=T_{3,6}$.

$m(L9n19\{0\})=\sigma_{1}\sigma_{2}^{2}\sigma_{1}^{2}\sigma_{2}\cdot\sigma_{1}^{-1}\sigma_{2}\sigma_{1}\nearrow_{1}\sigma_{1}\sigma_{2}^{2}\sigma_{1}^{2}\sigma_{2}^{2}\sigma_{1}\nearrow_{4}(\sigma_{1}\sigma_{2})^{6}=T_{3,6}$.

$L9n19\{1\}=\sigma_{2}^{2}\sigma_{1}\cdot\sigma_{2}^{-1}\sigma_{1}\sigma_{2}\cdot\sigma_{2}\sigma_{1}^{2}\nearrow_{1}\sigma_{2}^{2}\sigma_{1}^{2}\sigma_{2}^{2}\sigma_{1}^{2}\nearrow_{4}(\sigma_{2}\sigma_{1})^{6}=T_{3,6}$.

$L10n91\{1,0\}=\sigma_{2}\cdot\sigma_{3}^{-1}\sigma_{2}\sigma_{3}\cdot\sigma_{1}^{2}\cdot\sigma_{3}^{-1}\sigma_{2}\sigma_{3}\cdot\sigma_{3}\sigma_{2}\sigma_{1}^{2}\nearrow_{2}\sigma_{2}^{2}\sigma_{3}\sigma_{1}^{2}\sigma_{2}\sigma_{3}^{2}\sigma_{2}\sigma_{1}^{2}=\sigma_{2}^{2}\sigma_{1}^{2}\sigma_{3}\sigma_{2}\sigma_{3}^{2}\sigma_{2}\sigma_{1}^{2}$\\
$=\sigma_{2}^{2}\sigma_{1}^{2}\sigma_{2}^{2}\sigma_{3}\sigma_{2}^{2}\sigma_{1}^{2}\sim_{D}\sigma_{2}^{2}\sigma_{1}^{2}\sigma_{2}^{4}\sigma_{1}^{2}\nearrow_{6}(\sigma_{2}\sigma_{1})^{8}=T_{3,8}$.

$m(L10n93\{0,0\})=\sigma_{1}^{2}\sigma_{2}^{2}(\sigma_{1}\sigma_{2})^{3}\nearrow_{2}(\sigma_{1}\sigma_{2})^{6}=T_{3,6}$.

$L10n93\{0,1\}=\sigma_{1}^{-1}\sigma_{2}\sigma_{1}\cdot\sigma_{3}^{-1}\sigma_{2}^{-1}\sigma_{1}\sigma_{2}\sigma_{3}\cdot\sigma_{1}^{-2}\sigma_{2}\sigma_{1}^{2}\cdot\sigma_{3}^{-1}\sigma_{2}^{-1}\sigma_{1}\sigma_{2}\sigma_{3}\cdot\sigma_{1}^{-2}\sigma_{2}\sigma_{1}^{2}$\\
$\nearrow_{8}\sigma_{1}^{-1}\sigma_{2}\sigma_{1}^{2}\sigma_{2}\sigma_{3}\sigma_{2}\sigma_{1}^{3}\sigma_{2}\sigma_{3}\sigma_{2}\sigma_{1}^{2}\sim\sigma_{1}\sigma_{2}\sigma_{1}^{2}\sigma_{2}\sigma_{3}\sigma_{2}\sigma_{1}^{3}\sigma_{2}\sigma_{3}\sigma_{2}=\sigma_{2}^{2}\sigma_{1}\sigma_{2}^{2}\sigma_{3}\sigma_{2}\sigma_{1}^{3}\sigma_{2}\sigma_{3}\sigma_{2}$\\
$=\sigma_{2}^{2}\sigma_{1}\sigma_{3}\sigma_{2}\sigma_{3}^{2}\sigma_{1}^{3}\sigma_{2}\sigma_{3}\sigma_{2}=\sigma_{2}^{2}\sigma_{3}\sigma_{1}\sigma_{2}\sigma_{1}^{3}\sigma_{3}^{2}\sigma_{2}\sigma_{3}\sigma_{2}=\sigma_{2}^{2}\sigma_{3}\sigma_{2}^{3}\sigma_{1}\sigma_{2}\sigma_{3}^{2}\sigma_{2}\sigma_{3}\sigma_{2}$\\
$\sim_{D}\sigma_{2}^{2}\sigma_{3}\sigma_{2}^{4}\sigma_{3}^{2}\sigma_{2}\sigma_{3}\sigma_{2}=(\sigma_{2}\sigma_{3})^{2}\sigma_{2}^{3}\sigma_{3}^{2}\sigma_{2}\sigma_{3}\sigma_{2}\nearrow_{2}(\sigma_{2}\sigma_{3})^{2}\sigma_{2}^{2}\sigma_{3}\sigma_{2}\sigma_{3}\sigma_{2}\sigma_{3}\sigma_{2}\sigma_{3}\sigma_{2}$\\
$=(\sigma_{2}\sigma_{3})^{7}=T_{3,7}$.

$L10n94\{0,0\}=\sigma_{1}\cdot\sigma_{2}^{-1}\sigma_{1}\sigma_{2}\cdot\sigma_{2}\sigma_{1}\cdot\sigma_{2}^{-1}\sigma_{1}\sigma_{2}\cdot\sigma_{2}\nearrow_{2}\sigma_{1}^{2}\sigma_{2}^{2}\sigma_{1}^{2}\sigma_{2}^{2}\nearrow_{4}(\sigma_{1}\sigma_{2})^{6}=T_{3,6}$.


$L10n94\{1,0\}=L10n94\{1,1\}=\sigma_{3}^{-1}\sigma_{2}\sigma_{3}\cdot\sigma_{1}^{-2}\sigma_{2}^{-1}\sigma_{3}\sigma_{2}\sigma_{1}^{2}\cdot\sigma_{1}^{-1}\sigma_{2}\sigma_{1}\nearrow_{4}\sigma_{2}\sigma_{3}^{2}\sigma_{2}\sigma_{1}\sigma_{2}\sigma_{1}$\\
$=\sigma_{2}\sigma_{3}^{2}\sigma_{2}^{2}\sigma_{1}\sigma_{2}\sim_{D}\sigma_{2}\sigma_{3}^{2}\sigma_{2}^{3}\nearrow_{2}\sigma_{2}\sigma_{3}\sigma_{2}\sigma_{3}\sigma_{2}\sigma_{2}\sigma_{3}\sigma_{2}=(\sigma_{2}\sigma_{3})^{4}=T_{3,4}$.

$m(L10n94\{0,1\})=\sigma_{2}\sigma_{3}^{2}\sigma_{2}\sigma_{3}\sigma_{1}\sigma_{2}\cdot\sigma_{3}^{-1}\sigma_{2}\sigma_{3}\cdot\sigma_{1}\nearrow_{2}\sigma_{2}\sigma_{3}^{2}\sigma_{2}\sigma_{3}\sigma_{1}\sigma_{2}\sigma_{3}\sigma_{2}\sigma_{3}\sigma_{1}$\\
$=\sigma_{2}\sigma_{3}^{2}\sigma_{2}\sigma_{3}\sigma_{1}\sigma_{3}\sigma_{2}\sigma_{3}^{2}\sigma_{1}=\sigma_{2}\sigma_{3}^{2}\sigma_{2}\sigma_{3}^{2}\sigma_{1}\sigma_{2}\sigma_{1}\sigma_{3}^{2}=\sigma_{2}\sigma_{3}^{2}\sigma_{2}\sigma_{3}^{2}\sigma_{2}\sigma_{1}\sigma_{2}\sigma_{3}^{2}$\\
$\sim_{D}\sigma_{2}\sigma_{3}^{2}\sigma_{2}\sigma_{3}^{2}\sigma_{2}^{2}\sigma_{3}^{2}=(\sigma_{2}\sigma_{3})^{3}\sigma_{2}^{2}\sigma_{3}^{2}\nearrow_{2}(\sigma_{2}\sigma_{3})^{6}=T_{3,6}$.

$m(L10n104\{0,0,0\})=\sigma_{1}\sigma_{2}\cdot\sigma_{1}^{-1}\sigma_{2}\sigma_{1}\cdot\sigma_{3}\sigma_{2}\cdot\sigma_{3}^{-1}\sigma_{2}\sigma_{3}\nearrow_{2}\sigma_{1}\sigma_{2}^{2}\sigma_{1}\sigma_{3}\sigma_{2}^{2}\sigma_{3}$\\
$\nearrow_{1}\sigma_{1}\sigma_{2}^{2}\sigma_{1}\sigma_{3}\sigma_{2}\sigma_{1}\sigma_{2}\sigma_{3}=\sigma_{1}\sigma_{2}^{2}\sigma_{1}\sigma_{3}\sigma_{1}\sigma_{2}\sigma_{1}\sigma_{3}=\sigma_{1}\sigma_{2}^{2}\sigma_{1}^{2}\sigma_{3}\sigma_{2}\sigma_{3}\sigma_{1}=\sigma_{1}\sigma_{2}^{2}\sigma_{1}^{2}\sigma_{2}\sigma_{3}\sigma_{2}\sigma_{1}$\\
$\sim_{D}\sigma_{1}\sigma_{2}^{2}\sigma_{1}^{2}\sigma_{2}^{2}\sigma_{1}\nearrow_{4}(\sigma_{1}\sigma_{2})^{6}=T_{3,6}$.


$m(L10n104\{1,0,0\})=m(L10n104\{0,0,1\})=\sigma_{2}^{-1}\sigma_{1}^{-2}\sigma_{2}\sigma_{1}^{2}\sigma_{2}\cdot\sigma_{3}\sigma_{2}^{-1}\sigma_{3}\sigma_{2}\sigma_{3}^{-1}$\\
$\nearrow_{5}\sigma_{1}^{2}\sigma_{2}\sigma_{3}\sigma_{2}\sigma_{3}\sigma_{2}=\sigma_{1}^{2}\sigma_{2}^{2}\sigma_{3}\sigma_{2}^{2}\sim_{D}\sigma_{1}^{2}\sigma_{2}^{4}\nearrow_{2}\sigma_{1}\sigma_{2}\sigma_{1}\sigma_{2}^{2}\sigma_{1}\sigma_{2}^{2}=(\sigma_{1}\sigma_{2})^{4}=T_{3,4}$.

$m(L10n104\{1,1,0\})=m(L10n104\{0,1,1\})=\sigma_{3}^{-1}\sigma_{2}\sigma_{1}^{-2}\sigma_{2}^{-1}\sigma_{3}\sigma_{2}\sigma_{1}^{2}\sigma_{2}^{-1}\sigma_{3}\cdot \sigma_{3}^{-1}\sigma_{2}^{-2}\sigma_{3}\sigma_{2}^{2}\sigma_{3}$\\
$\nearrow_{6}\sigma_{2}\sigma_{1}\sigma_{2}\sigma_{1}\sigma_{3}\sigma_{2}^{2}\sigma_{3}=\sigma_{2}^{2}\sigma_{1}\sigma_{2}\sigma_{3}\sigma_{2}^{2}\sigma_{3}\sim_{D}\sigma_{2}^{3}\sigma_{3}\sigma_{2}^{2}\sigma_{3}=\sigma_{2}(\sigma_{2}\sigma_{3})^{3}\nearrow_{1}(\sigma_{2}\sigma_{3})^{4}=T_{3,4}$.

$L10n104\{1,0,1\}=(\sigma_{1}\sigma_{2})^{2}\sigma_{1}\sigma_{3}\sigma_{2}\sigma_{3}\sigma_{2}\sigma_{3}=(\sigma_{1}\sigma_{2})^{3}\sigma_{3}\sigma_{2}^{2}\sigma_{3}\nearrow_{1}(\sigma_{1}\sigma_{2})^{3}\sigma_{3}\sigma_{2}\sigma_{1}\sigma_{2}\sigma_{3}$\\
$=(\sigma_{1}\sigma_{2})^{3}\sigma_{3}\sigma_{1}\sigma_{2}\sigma_{1}\sigma_{3}=(\sigma_{1}\sigma_{2})^{3}\sigma_{1}\sigma_{3}\sigma_{2}\sigma_{3}\sigma_{1}=(\sigma_{1}\sigma_{2})^{4}\sigma_{3}\sigma_{2}\sigma_{1}\sim_{D}(\sigma_{1}\sigma_{2})^{4}\sigma_{2}\sigma_{1}$\\
$\nearrow_{2}(\sigma_{1}\sigma_{2})^{4}\sigma_{2}\sigma_{1}\sigma_{2}^{2}=(\sigma_{1}\sigma_{2})^{6}=T_{3,6}$.

$m(L10n111\{0,0,1\})=\sigma_{2}\sigma_{3}^{2}\sigma_{4}\cdot\sigma_{3}^{-1}\sigma_{2}\sigma_{3}\cdot\sigma_{1}^{-1}\sigma_{2}\sigma_{1}\cdot\sigma_{3}\sigma_{4}\sigma_{1}\nearrow_{2}\sigma_{2}\sigma_{3}^{2}\sigma_{4}\sigma_{2}\sigma_{3}\sigma_{2}\sigma_{1}\sigma_{3}\sigma_{4}\sigma_{1}$\\
$=\sigma_{2}\sigma_{3}^{2}\sigma_{4}\sigma_{2}\sigma_{3}\sigma_{2}\sigma_{3}\sigma_{4}\sigma_{1}^{2}\nearrow_{1}\sigma_{2}\sigma_{3}^{2}\sigma_{4}\sigma_{2}\sigma_{3}\sigma_{2}\sigma_{3}\sigma_{4}\sigma_{1}\sigma_{2}\sigma_{1}=\sigma_{2}\sigma_{3}^{2}\sigma_{4}\sigma_{2}\sigma_{3}\sigma_{2}\sigma_{3}\sigma_{4}\sigma_{2}\sigma_{1}\sigma_{2}$\\
$\sim_{D}\sigma_{2}\sigma_{3}^{2}\sigma_{4}\sigma_{2}\sigma_{3}\sigma_{2}\sigma_{3}\sigma_{4}\sigma_{2}^{2}=\sigma_{2}\sigma_{3}^{2}\sigma_{4}\sigma_{2}^{2}\sigma_{3}\sigma_{2}\sigma_{4}\sigma_{2}^{2}=\sigma_{2}\sigma_{3}^{2}\sigma_{2}^{2}\sigma_{4}\sigma_{3}\sigma_{4}\sigma_{2}^{3}=\sigma_{2}\sigma_{3}^{2}\sigma_{2}^{2}\sigma_{3}\sigma_{4}\sigma_{3}\sigma_{2}^{3}$\\
$\sim_{D}\sigma_{2}\sigma_{3}^{2}\sigma_{2}^{2}\sigma_{3}^{2}\sigma_{2}^{3}\nearrow_{4}\sigma_{2}\sigma_{3}^{2}(\sigma_{2}\sigma_{3})^{5}\sigma_{2}=(\sigma_{2}\sigma_{3})^{7}=T_{3,7}$.

$m(L11n204\{0\})=(\sigma_{1}\sigma_{2})^{2}\sigma_{1}(\sigma_{1}\sigma_{2})^{3}=(\sigma_{1}\sigma_{2})^{5}\sigma_{1}\nearrow_{1}(\sigma_{1}\sigma_{2})^{6}=T_{3,6}$.

$L11n204\{1\}=\sigma_{3}^{-1}\sigma_{2}^{-1}\sigma_{1}\sigma_{2}\sigma_{3}\cdot\sigma_{1}^{-3}\sigma_{2}\sigma_{1}^{3}\cdot\sigma_{3}^{-1}\sigma_{2}\sigma_{3}\cdot\sigma_{1}^{-2}\sigma_{2}\sigma_{1}^{2}\nearrow_{9}\sigma_{1}\sigma_{2}\sigma_{3}\sigma_{2}\sigma_{1}^{3}\sigma_{2}\sigma_{3}\sigma_{2}\sigma_{1}\sigma_{2}\sigma_{1}$\\
$\sim\sigma_{2}\sigma_{3}\sigma_{2}\sigma_{1}^{3}\sigma_{2}\sigma_{3}\sigma_{2}\sigma_{1}\sigma_{2}\sigma_{1}^{2}=\sigma_{2}\sigma_{3}\sigma_{2}\sigma_{1}^{3}\sigma_{2}\sigma_{3}\sigma_{2}^{3}\sigma_{1}\sigma_{2}=\sigma_{2}\sigma_{3}\sigma_{2}\sigma_{1}^{3}\sigma_{3}^{3}\sigma_{2}\sigma_{3}\sigma_{1}\sigma_{2}$\\
$=(\sigma_{2}\sigma_{3})^{2}\sigma_{3}^{2}\sigma_{1}^{3}\sigma_{2}\sigma_{1}\sigma_{3}\sigma_{2}=(\sigma_{2}\sigma_{3})^{2}\sigma_{3}^{2}\sigma_{2}\sigma_{1}\sigma_{2}^{3}\sigma_{3}\sigma_{2}\sim_{D}(\sigma_{2}\sigma_{3})^{2}\sigma_{3}^{2}\sigma_{2}^{4}\sigma_{3}\sigma_{2}$\\
$=(\sigma_{2}\sigma_{3})^{2}\sigma_{3}^{2}\sigma_{2}^{3}\sigma_{3}\sigma_{2}\sigma_{3}\nearrow_{2}(\sigma_{2}\sigma_{3})^{2}\sigma_{3}\sigma_{2}\sigma_{3}\sigma_{2}\sigma_{3}\sigma_{2}(\sigma_{2}\sigma_{3})^{2}=(\sigma_{2}\sigma_{3})^{7}=T_{3,7}$.

$m(L11n205\{0\})=\sigma_{2}^{-2}\sigma_{1}\sigma_{2}^{2}\cdot\sigma_{1}\cdot\sigma_{2}^{-1}\sigma_{1}\sigma_{2}\cdot\sigma_{2}\sigma_{1}\nearrow_{3}\sigma_{1}\sigma_{2}^{2}\sigma_{1}^{2}\sigma_{2}^{2}\sigma_{1}\nearrow_{4}(\sigma_{1}\sigma_{2})^{6}=T_{3,6}$.

$L11n205\{1\}=\sigma_{2}\sigma_{3}^{3}\sigma_{2}\sigma_{3}\sigma_{1}\sigma_{2}\cdot\sigma_{3}^{-1}\sigma_{2}\sigma_{3}\cdot\sigma_{1}\nearrow_{2}\sigma_{2}\sigma_{3}^{3}\sigma_{2}\sigma_{3}\sigma_{1}\sigma_{2}\sigma_{3}\sigma_{2}\sigma_{3}\sigma_{1}$\\
$=\sigma_{2}\sigma_{3}^{3}\sigma_{2}\sigma_{3}\sigma_{1}\sigma_{3}\sigma_{2}\sigma_{3}^{2}\sigma_{1}=\sigma_{2}\sigma_{3}^{3}\sigma_{2}\sigma_{3}^{2}\sigma_{1}\sigma_{2}\sigma_{1}\sigma_{3}^{2}=\sigma_{2}\sigma_{3}^{3}\sigma_{2}\sigma_{3}^{2}\sigma_{2}\sigma_{1}\sigma_{2}\sigma_{3}^{2}\sim_{D}\sigma_{2}\sigma_{3}^{3}\sigma_{2}\sigma_{3}^{2}\sigma_{2}^{2}\sigma_{3}^{2}$\\
$=\sigma_{2}\sigma_{3}^{2}\sigma_{2}\sigma_{3}\sigma_{2}\sigma_{3}\sigma_{2}^{2}\sigma_{3}^{2}=(\sigma_{2}\sigma_{3})^{5}\sigma_{3}\nearrow_{1}(\sigma_{2}\sigma_{3})^{6}=T_{3,6}$.


$m(L11n226\{0\})=m(L11n226\{1\})=\sigma_{3}^{-1}\sigma_{2}^{-2}\sigma_{3}\sigma_{2}^{2}\sigma_{3}\cdot\sigma_{2}\sigma_{1}^{-2}\sigma_{2}^{-1}\sigma_{3}^{2}\sigma_{2}\sigma_{1}^{2}\sigma_{2}^{-1}\cdot\sigma_{2}\sigma_{1}^{-1}\sigma_{2}\sigma_{1}\sigma_{2}^{-1}$\\
$\nearrow_{4}\sigma_{2}^{2}\sigma_{3}^{3}\sigma_{2}\sigma_{1}\sigma_{2}\sigma_{1}\sigma_{2}^{-1}\sim\sigma_{2}\sigma_{3}^{3}\sigma_{2}\sigma_{1}\sigma_{2}\sigma_{1}=\sigma_{2}\sigma_{3}^{3}\sigma_{2}^{2}\sigma_{1}\sigma_{2}\sim_{D}\sigma_{2}\sigma_{3}^{3}\sigma_{2}^{3}\nearrow_{1}\sigma_{2}\sigma_{3}\sigma_{2}\sigma_{3}^{2}\sigma_{2}^{3}$\\
$=\sigma_{2}^{3}\sigma_{3}\sigma_{2}^{4}\sim_{D}\sigma_{2}^{7}=T_{2,7}$.

$m(L11n236\{0\})=\sigma_{2}\sigma_{3}^{3}\sigma_{2}\cdot\sigma_{3}^{-1}\sigma_{2}\sigma_{3}\cdot\sigma_{1}^{-1}\sigma_{2}\sigma_{1}\cdot\sigma_{3}^{2}\sigma_{1}\nearrow_{3}\sigma_{2}\sigma_{3}^{3}\sigma_{2}^{2}\sigma_{3}\sigma_{2}\sigma_{1}\sigma_{3}^{2}\sigma_{2}\sigma_{1}$\\
$=\sigma_{2}\sigma_{3}^{3}\sigma_{2}^{2}\sigma_{3}\sigma_{2}\sigma_{3}^{2}\sigma_{1}\sigma_{2}\sigma_{1}=\sigma_{2}\sigma_{3}^{3}\sigma_{2}^{2}\sigma_{3}\sigma_{2}\sigma_{3}^{2}\sigma_{2}\sigma_{1}\sigma_{2}\sim_{D}\sigma_{2}\sigma_{3}^{3}\sigma_{2}^{2}\sigma_{3}\sigma_{2}\sigma_{3}^{2}\sigma_{2}^{2}$\\
$\nearrow_{4}\sigma_{2}\sigma_{3}^{2}(\sigma_{2}\sigma_{3})^{4}\sigma_{3}(\sigma_{2}\sigma_{3})^{2}=(\sigma_{2}\sigma_{3})^{8}=T_{3,8}$.

$L11n237\{1\}=\sigma_{1}\sigma_{2}\sigma_{3}\cdot\sigma_{2}^{-1}\sigma_{3}\sigma_{2}\cdot\sigma_{1}^{-1}\sigma_{2}\sigma_{1}\cdot\sigma_{3}^{2}\cdot\sigma_{1}^{-1}\sigma_{2}\sigma_{1}\nearrow_{3}\sigma_{1}\sigma_{2}\sigma_{3}^{2}\sigma_{2}^{2}\sigma_{1}\sigma_{3}^{2}\sigma_{1}^{-1}\sigma_{2}\sigma_{1}\sigma_{2}$\\
$=\sigma_{1}\sigma_{2}\sigma_{3}^{2}\sigma_{2}^{2}\sigma_{3}^{2}\sigma_{2}\sigma_{1}\sigma_{2}\sim\sigma_{2}\sigma_{3}^{2}\sigma_{2}^{2}\sigma_{3}^{2}\sigma_{2}\sigma_{1}\sigma_{2}\sigma_{1}=\sigma_{2}\sigma_{3}^{2}\sigma_{2}^{2}\sigma_{3}^{2}\sigma_{2}^{2}\sigma_{1}\sigma_{2}\sim_{D}\sigma_{2}\sigma_{3}^{2}\sigma_{2}^{2}\sigma_{3}^{2}\sigma_{2}^{3}$\\
$\nearrow_{4}\sigma_{2}\sigma_{3}^{2}(\sigma_{2}\sigma_{3})^{5}\sigma_{2}=(\sigma_{2}\sigma_{3})^{7}=T_{3,7}$.

$m(L11n379\{0,1\})=(\sigma_{1}\sigma_{2})^{3}\sigma_{3}^{2}\sigma_{2}\sigma_{3}^{2}=(\sigma_{1}\sigma_{2})^{3}\sigma_{3}\sigma_{2}\sigma_{3}\sigma_{2}\sigma_{3}=(\sigma_{1}\sigma_{2})^{3}\sigma_{2}\sigma_{3}\sigma_{2}^{2}\sigma_{3}$\\
$\nearrow_{1}(\sigma_{1}\sigma_{2})^{3}\sigma_{2}\sigma_{3}\sigma_{2}\sigma_{1}\sigma_{2}\sigma_{3}=(\sigma_{1}\sigma_{2})^{3}\sigma_{2}\sigma_{3}\sigma_{1}\sigma_{2}\sigma_{1}\sigma_{3}=(\sigma_{1}\sigma_{2})^{3}\sigma_{2}\sigma_{1}\sigma_{3}\sigma_{2}\sigma_{3}\sigma_{1}$\\
$=(\sigma_{1}\sigma_{2})^{3}\sigma_{2}\sigma_{1}\sigma_{2}\sigma_{3}\sigma_{2}\sigma_{1}\sim_{D}(\sigma_{1}\sigma_{2})^{3}\sigma_{2}\sigma_{1}\sigma_{2}^{2}\sigma_{1}=(\sigma_{1}\sigma_{2})^{5}\sigma_{1}\nearrow_{1}(\sigma_{1}\sigma_{2})^{6}=T_{3,6}$.

$L11n379\{1,1\}=\sigma_{1}\sigma_{2}\cdot\sigma_{1}^{-1}\sigma_{2}\sigma_{1}\cdot\sigma_{2}^{-1}\sigma_{3}\sigma_{2}\cdot\sigma_{3}^{-1}\sigma_{2}\sigma_{3}\nearrow_{4}\sigma_{1}\sigma_{2}^{2}\sigma_{1}\sigma_{3}\sigma_{2}\sigma_{1}\sigma_{2}\sigma_{3}$\\
$=\sigma_{1}\sigma_{2}^{2}\sigma_{1}\sigma_{3}\sigma_{1}\sigma_{2}\sigma_{1}\sigma_{3}=\sigma_{1}\sigma_{2}^{2}\sigma_{1}^{2}\sigma_{3}\sigma_{2}\sigma_{3}\sigma_{1}=\sigma_{1}\sigma_{2}^{2}\sigma_{1}^{2}\sigma_{2}\sigma_{3}\sigma_{2}\sigma_{1}\sim_{D}\sigma_{1}\sigma_{2}^{2}\sigma_{1}^{2}\sigma_{2}^{2}\sigma_{1}$\\
$\nearrow_{4}(\sigma_{1}\sigma_{2})^{6}=T_{3,6}$.


$m(L11n381\{0,0\})=m(L11n381\{1,0\})=\sigma_{1}\cdot\sigma_{3}^{-1}\sigma_{2}^{-2}\sigma_{3}\sigma_{2}\sigma_{1}^{-1}\sigma_{2}\sigma_{1}\sigma_{2}^{-1}\sigma_{3}^{-1}\sigma_{2}^{2}\sigma_{3}\cdot\sigma_{1}$\\
$\nearrow_{4}\sigma_{1}\sigma_{2}^{2}\sigma_{1}\sigma_{2}\sigma_{3}\sigma_{1}\sim_{D}\sigma_{1}\sigma_{2}^{2}\sigma_{1}\sigma_{2}\sigma_{1}\nearrow_{2}(\sigma_{1}\sigma_{2})^{4}=T_{3,4}$.


$m(L11n381\{0,1\})=m(L11n381\{1,1\})=\sigma_{2}^{-1}\sigma_{1}^{-2}\sigma_{2}^{2}\sigma_{1}^{2}\sigma_{2}\cdot\sigma_{3}\sigma_{2}^{-1}\sigma_{3}\sigma_{2}\sigma_{3}^{-1}$\\
$\nearrow_{5}\sigma_{2}\sigma_{1}^{2}\sigma_{2}\sigma_{3}\sigma_{2}\sigma_{3}\sigma_{2}=\sigma_{2}\sigma_{1}^{2}\sigma_{2}^{2}\sigma_{3}\sigma_{2}^{2}\sim_{D}\sigma_{2}\sigma_{1}^{2}\sigma_{2}^{4}\nearrow_{1}\sigma_{2}\sigma_{1}^{2}\sigma_{2}^{2}\sigma_{1}\sigma_{2}^{2}=\sigma_{2}\sigma_{1}^{2}\sigma_{2}\sigma_{1}\sigma_{2}\sigma_{1}\sigma_{2}$\\
$=(\sigma_{2}\sigma_{1})^{4}=T_{3,4}$.

$m(L11n411\{0,0\})=\sigma_{1}\sigma_{2}\sigma_{3}^{2}\sigma_{4}\cdot\sigma_{3}^{-1}\sigma_{2}\sigma_{3}\cdot\sigma_{1}^{-1}\sigma_{2}\sigma_{1}\cdot\sigma_{3}\sigma_{4}\sigma_{1}\nearrow_{2}\sigma_{1}\sigma_{2}\sigma_{3}^{2}\sigma_{4}\sigma_{2}\sigma_{3}\sigma_{2}\sigma_{1}\sigma_{3}\sigma_{4}\sigma_{1}$\\
$\sim\sigma_{2}\sigma_{3}^{2}\sigma_{4}\sigma_{2}\sigma_{3}\sigma_{2}\sigma_{3}\sigma_{4}\sigma_{1}^{3}\nearrow_{1}\sigma_{2}\sigma_{3}^{2}\sigma_{4}\sigma_{2}\sigma_{3}\sigma_{2}\sigma_{3}\sigma_{4}\sigma_{1}\sigma_{2}\sigma_{1}^{2}=\sigma_{2}\sigma_{3}^{2}\sigma_{4}\sigma_{2}\sigma_{3}\sigma_{2}\sigma_{3}\sigma_{4}\sigma_{2}^{2}\sigma_{1}\sigma_{2}$\\
$\sim_{D}\sigma_{2}\sigma_{3}^{2}\sigma_{4}\sigma_{2}\sigma_{3}\sigma_{2}\sigma_{3}\sigma_{4}\sigma_{2}^{3}=\sigma_{2}\sigma_{3}^{2}\sigma_{4}\sigma_{2}^{2}\sigma_{3}\sigma_{2}\sigma_{4}\sigma_{2}^{3}=\sigma_{2}\sigma_{3}^{2}\sigma_{2}^{2}\sigma_{4}\sigma_{3}\sigma_{4}\sigma_{2}^{4}=\sigma_{2}\sigma_{3}^{2}\sigma_{2}^{2}\sigma_{3}\sigma_{4}\sigma_{3}\sigma_{2}^{4}$\\
$\sim_{D}\sigma_{2}\sigma_{3}^{2}\sigma_{2}^{2}\sigma_{3}^{2}\sigma_{2}^{4}\nearrow_{3}\sigma_{2}(\sigma_{2}\sigma_{3})^{2}\sigma_{2}^{2}\sigma_{3}^{2}\sigma_{2}^{2}\sigma_{3}\sigma_{2}^{2}=\sigma_{2}(\sigma_{2}\sigma_{3})^{2}\sigma_{2}^{2}\sigma_{3}^{2}(\sigma_{2}\sigma_{3})^{2}\sigma_{2}$\\
$=\sigma_{2}(\sigma_{2}\sigma_{3})^{2}\sigma_{2}(\sigma_{2}\sigma_{3})^{4}\sim(\sigma_{2}\sigma_{3})^{2}\sigma_{2}(\sigma_{2}\sigma_{3})^{4}\sigma_{2}=(\sigma_{2}\sigma_{3})^{7}=T_{3,7}$.

$L11n428\{1,0\}=L11n428\{1,1\}=\sigma_{1}\cdot\sigma_{2}^{-1}\sigma_{3}\sigma_{2}^{-1}\sigma_{3}^{-1}\sigma_{1}^{-1}\sigma_{2}\sigma_{1}\sigma_{3}\sigma_{2}\sigma_{3}^{-1}\sigma_{2}\cdot\sigma_{2}^{-2}\sigma_{3}\sigma_{2}^{2}$\\
$\nearrow_{3}\sigma_{2}\sigma_{1}\sigma_{3}\sigma_{2}^{3}\sim_{D}\sigma_{2}^{4}=T_{2,4}$.

$m(L11n432\{0,1\})=\sigma_{2}^{-1}\sigma_{3}^{2}\sigma_{2}\cdot\sigma_{3}^{3}\sigma_{2}\cdot\sigma_{3}^{-1}\sigma_{2}\sigma_{3}\cdot\sigma_{1}^{-1}\sigma_{2}\sigma_{1}\cdot\sigma_{1}\nearrow_{4}\sigma_{3}^{2}\sigma_{2}\sigma_{3}^{3}\sigma_{2}^{2}\sigma_{3}\sigma_{2}\sigma_{1}\sigma_{2}\sigma_{1}$\\
$=\sigma_{3}^{2}\sigma_{2}\sigma_{3}^{3}\sigma_{2}^{2}\sigma_{3}\sigma_{2}^{2}\sigma_{1}\sigma_{2}\sim_{D}\sigma_{3}^{2}\sigma_{2}\sigma_{3}^{3}\sigma_{2}^{2}\sigma_{3}\sigma_{2}^{3}=(\sigma_{3}\sigma_{2})^{2}\sigma_{3}(\sigma_{3}\sigma_{2})^{3}\sigma_{2}=(\sigma_{3}\sigma_{2})^{6}=T_{3,6}$.

$m(L11n433\{0,1\})=\sigma_{2}\sigma_{3}^{2}\cdot\sigma_{2}\sigma_{3}^{-1}\sigma_{4}\sigma_{3}\sigma_{2}^{-1}\cdot\sigma_{1}^{-1}\sigma_{2}\sigma_{1}\cdot\sigma_{3}\sigma_{4}\sigma_{1}\nearrow_{4}\sigma_{2}\sigma_{3}^{2}\sigma_{2}\sigma_{4}\sigma_{3}\sigma_{2}\sigma_{1}\sigma_{3}\sigma_{4}\sigma_{2}\sigma_{1}$\\
$=\sigma_{2}\sigma_{3}^{2}\sigma_{2}\sigma_{4}\sigma_{3}\sigma_{2}\sigma_{3}\sigma_{4}\sigma_{1}\sigma_{2}\sigma_{1}=\sigma_{2}\sigma_{3}^{2}\sigma_{2}\sigma_{4}\sigma_{2}\sigma_{3}\sigma_{2}\sigma_{4}\sigma_{2}\sigma_{1}\sigma_{2}=\sigma_{2}\sigma_{3}^{2}\sigma_{2}^{2}\sigma_{4}\sigma_{3}\sigma_{4}\sigma_{2}^{2}\sigma_{1}\sigma_{2}$\\
$=\sigma_{2}\sigma_{3}^{2}\sigma_{2}^{2}\sigma_{3}\sigma_{4}\sigma_{3}\sigma_{2}^{2}\sigma_{1}\sigma_{2}\sim_{D}\sigma_{2}\sigma_{3}^{2}\sigma_{2}^{2}\sigma_{3}^{2}\sigma_{2}^{3}\nearrow_{4}\sigma_{2}\sigma_{3}^{2}(\sigma_{2}\sigma_{3})^{5}\sigma_{2}=(\sigma_{2}\sigma_{3})^{7}=T_{3,7}$.

$L11n437\{1,0\}=\sigma_{1}\sigma_{2}^{2}\sigma_{1}\sigma_{3}\cdot\sigma_{2}^{-1}\sigma_{1}\sigma_{2}\cdot\sigma_{2}^{-1}\sigma_{3}\sigma_{2}\cdot\sigma_{2}\sigma_{1}\nearrow_{2}\sigma_{1}\sigma_{2}^{2}\sigma_{1}\sigma_{3}\sigma_{2}\sigma_{1}\sigma_{3}\sigma_{2}^{2}\sigma_{1}$\\
$=\sigma_{1}\sigma_{2}^{2}\sigma_{1}\sigma_{3}\sigma_{2}\sigma_{3}\sigma_{1}\sigma_{2}^{2}\sigma_{1}=\sigma_{1}\sigma_{2}^{2}\sigma_{1}\sigma_{2}\sigma_{3}\sigma_{2}\sigma_{1}\sigma_{2}^{2}\sigma_{1}\sim_{D}\sigma_{1}\sigma_{2}^{2}\sigma_{1}\sigma_{2}^{2}\sigma_{1}\sigma_{2}^{2}\sigma_{1}=(\sigma_{1}\sigma_{2})^{4}\sigma_{2}\sigma_{1}$\\
$\nearrow_{2}(\sigma_{1}\sigma_{2})^{6}=T_{3,6}$.

$L11n459\{1,1,1\}=\sigma_{2}\cdot\sigma_{3}^{-1}\sigma_{2}\sigma_{3}\cdot\sigma_{1}^{2}\cdot\sigma_{3}^{-1}\sigma_{2}\sigma_{3}\cdot\sigma_{3}^{2}\sigma_{2}\sigma_{1}^{2}\nearrow_{1}\sigma_{2}^{2}\sigma_{3}\sigma_{1}^{2}\sigma_{3}^{-1}\sigma_{2}\sigma_{3}^{3}\sigma_{2}\sigma_{1}^{2}$\\
$=\sigma_{2}^{2}\sigma_{1}^{2}\sigma_{2}\sigma_{3}^{3}\sigma_{2}\sigma_{1}^{2}\sim\sigma_{1}^{2}\sigma_{2}^{2}\sigma_{1}^{2}\sigma_{2}\sigma_{3}^{3}\sigma_{2}\nearrow_{1}\sigma_{1}^{2}\sigma_{2}\sigma_{3}\sigma_{2}\sigma_{1}^{2}\sigma_{2}\sigma_{3}^{3}\sigma_{2}=\sigma_{1}^{2}\sigma_{3}\sigma_{2}\sigma_{3}\sigma_{1}^{2}\sigma_{2}\sigma_{3}^{3}\sigma_{2}$\\
$=\sigma_{3}\sigma_{1}^{2}\sigma_{2}\sigma_{1}^{2}\sigma_{3}\sigma_{2}\sigma_{3}^{3}\sigma_{2}=\sigma_{3}\sigma_{2}\sigma_{1}\sigma_{2}^{2}\sigma_{1}\sigma_{3}\sigma_{2}\sigma_{3}^{3}\sigma_{2}\nearrow_{1}\sigma_{3}\sigma_{2}\sigma_{1}\sigma_{2}\sigma_{3}\sigma_{2}\sigma_{1}\sigma_{3}\sigma_{2}\sigma_{3}^{3}\sigma_{2}$\\
$=\sigma_{3}\sigma_{2}\sigma_{1}\sigma_{3}\sigma_{2}\sigma_{3}\sigma_{1}\sigma_{3}\sigma_{2}\sigma_{3}^{3}\sigma_{2}=\sigma_{3}\sigma_{2}\sigma_{3}\sigma_{1}\sigma_{2}\sigma_{1}\sigma_{3}^{2}\sigma_{2}\sigma_{3}^{3}\sigma_{2}=(\sigma_{3}\sigma_{2})^{2}\sigma_{1}\sigma_{2}\sigma_{3}^{2}\sigma_{2}\sigma_{3}^{3}\sigma_{2}$\\
$\sim_{D}(\sigma_{3}\sigma_{2})^{2}\sigma_{2}\sigma_{3}^{2}\sigma_{2}\sigma_{3}^{3}\sigma_{2}=(\sigma_{3}\sigma_{2})^{2}\sigma_{2}(\sigma_{3}\sigma_{2})^{2}\sigma_{3}^{2}\sigma_{2}=(\sigma_{3}\sigma_{2})^{6}=T_{3,6}$.

All these links are relative symplectic cobordant to $T_{3,11}$. So they all wear projective hats of degree-6. Moreover, by Lemma~\ref{cobordism1}, the following transverse links all wear both Hirzebruch hats of bidegree-$(3,3)$ and projective hats of degree-$4$: \\
$L10n94\{1,0\}$, $m(L10n104\{1,0,0\})$, $m(L10n104\{1,1,0\})$, $m(L11n226\{0\})$,\\  $m(L11n381\{0,0\})$,  $m(L11n381\{0,1\})$ and $L11n428\{1,0\}$. 
\end{proof}

\begin{remark}
One can also consider the quasi-positive transverse knots which are relative symplectic cobordant to $T_{5,6}$. Let $T(3,2;11,2)$ be the $(11,2)$-cable of the torus knot $T_{3,2}$, i.e., the $11\mu+2\lambda$ curve on the boundary of a tubular neighborhood of $T_{3,2}$, where $\mu$ is the meridian and $\lambda$ is the preferred longitude. Suppose $K$ is the transverse cable knot $T(3,2;11,2)$ which is presented by the closure of the braid $(\sigma_{1}\sigma_{2}\sigma_{3})^{5}\sigma_{1}\sigma_{2}$. Then $K$ is relative symplectic cobordant to $T_{5,6}$:

$(\sigma_{1}\sigma_{2}\sigma_{3})^{5}\sigma_{1}\sigma_{2}\nearrow_{2} (\sigma_{1}\sigma_{2}\sigma_{3})^{5}\sigma_{1}\sigma_{2}\sigma_{1}^{2} \sim_{S}(\sigma_{1}\sigma_{2}\sigma_{3})^{4}\sigma_{1}\sigma_{4}\sigma_{2}\sigma_{3}\sigma_{1}\sigma_{2}\sigma_{1}^{2}=\\
(\sigma_{1}\sigma_{2}\sigma_{3})^{4}\sigma_{1}\sigma_{4}\sigma_{2}\sigma_{3}(\sigma_{2}\sigma_{1})^2\nearrow_3 (\sigma_{1}\sigma_{2}\sigma_{3})^{4}\sigma_{1}\sigma_{4}\sigma_{2}\sigma_{3}\sigma_{2}\sigma_{4}\sigma_{1}\sigma_{2}\sigma_{3}\sigma_{4}\sigma_{1}=\\
(\sigma_{1}\sigma_{2}\sigma_{3})^{4}\sigma_{1}\sigma_{4}\sigma_{3}\sigma_{2}\sigma_{3}\sigma_{4}\sigma_{1}\sigma_{2}\sigma_{3}\sigma_{4}\sigma_{1}=
(\sigma_{1}\sigma_{2}\sigma_{3})^{4}\sigma_{4}\sigma_{3}(\sigma_{1}\sigma_{2}\sigma_{3}\sigma_{4})^2\sigma_{1}\nearrow_1 \\
\sigma_{1}\sigma_{2}\sigma_{3}\sigma_{1}\sigma_{2}\sigma_{4}\sigma_{3} (\sigma_{1}\sigma_{2}\sigma_{3})^{2}\sigma_{4}\sigma_{3}(\sigma_{1}\sigma_{2}\sigma_{3}\sigma_{4})^2\sigma_{1}=
\sigma_{2}\sigma_{3}\sigma_{4}(\sigma_{1}\sigma_{2}\sigma_{3}\sigma_{4})^5\sigma_{1}\\
\sim (\sigma_{1}\sigma_{2}\sigma_{3}\sigma_{4})^6=T_{5,6}.$

So $K$ wears a projective hat of degree-6. By Part (4) of Lemma~\ref{sig0}, $\sigma(K)=\sigma_{K}(-1)=\sigma_{T_{3,2}}(1)+\sigma_{T_{11,2}}(-1)=\sigma(T_{11,2})=-10$. On the other hand, $n-m=-13$. So by the same argument as in the proof of the 4th part of Theorem~\ref{3foldcover}, any non-negative definite exact filling $W$ of $\Sigma_{2}(K)$ is spin, has $b_{1}(W)=0$, $\chi(W)=15$ and $\sigma(W)=-10$. 
\end{remark}

\begin{remark}
There are infinitely many quasi-positive transverse links which are all relative symplectic cobordant to a fixed transverse knot $K$.  They can be constructed by the split unions of $K$ with transverse unlinks of any number of components, where each unknot component has the maximal self-linking number. Indeed, suppose a transverse link $L$ is relative symplectic cobordant to a transverse knot $K$,  and $L'$ is $L$ together with a maximal self-linking unknot that is separated from $L$ by a sphere, then $L'$ is relative symplectic cobordant to $L$, and hence to $K$. This is because the braid presentation of $L'$ is the same as that of $L$ with one more string, and one can apply \cite[Lemma 2.8]{eg} once and then do a destabilization to arrive at $L$.
\end{remark}


\begin{thebibliography}{10}

\bibitem{ba} S. Bauer, {\it Almost complex 4-manifolds with vanishing first Chern class}, J. Differential Geom. 79 (2008), no. 1, 25-32.

\bibitem{b} D. Bennequin, {\it Entrelacements et \'{e}quations de Pfaff}, Third Schnepfenried geometry conference, Vol. 1, 87-161, Ast\'{e}risque, 107-108(1983).

\bibitem{bbg} M. Boileau,  S. Boyer and C. M. Gordon, {\it Branched covers of quasi-positive links and L-spaces}, J. Topol. 12 (2019), no. 2, 536-576.

\bibitem{bo} M. Boileau and S. Orevkov, 
{\it Quasi-positivit\'{e} d'une courbe analytique dans une boule pseudo-convexe}, [Quasipositivity of an analytic curve in a pseudoconvex 4-ball] 
C. R. Acad. Sci. Paris S\'{e}r. I Math. 332 (2001), no. 9, 825-830.

\bibitem{cl} A. Christian and Y. Li, {\it Some applications of Menke's JSJ decomposition for symplectic fillings}, arXiv: 2006.16825.

\bibitem{c} A. Conway, {\it The Levine-Tristram signature: a survey}, 2019-20 MATRIX annals, 31-56, MATRIX Book Ser., 4, Springer, Cham, [2021].

\bibitem{co} A. Conway, Private communications.

\bibitem{eli} Y. Eliashberg, {\it Filling by holomorphic discs and its applications}, Geometry of low-dimensional manifolds, 2, 45-67, London Math. Soc. Lecture Note Ser., 151, Cambridge Univ. Press, Cambridge, 1990.

\bibitem{e} J. B. Etnyre, {\it Legendrian and transversal knots}, Handbook of knot theory, 105-185,  2005.

\bibitem{e1} J. B. Etnyre, {\it Planar open book decompositions and contact structures}, Int. Math. Res. Not. 2004, no. 79, 4255-4267.

\bibitem{ef} J. B. Etnyre and R. Furukawa, {\it Braided embeddings of contact 3-manifolds in the standard contact 5-sphere}, J. Topol. 10 (2017), no. 2, 412-446.

\bibitem{eg} J. B. Etnyre and M. Golla, {\it Symplectic hats}, arXiv:2001.08978.

\bibitem{eo}  J. B. Etnyre  and B. Ozbagci, { \it Invariants of contact structures from open books}, Trans. Amer. Math. Soc. 360 (2008), no. 6, 3133-3151.

\bibitem{er} J. B. Etnyre  and A. Roy,  {\it Symplectic fillings and cobordisms of lens spaces}, Trans. Amer. Math. Soc. 374 (2021), no. 12, 8813-8867.

\bibitem{flmn}  J. Fern\'{a}ndez de Bobadilla, I. Luengo,  A. Melle Hern\'{a}ndez and A. N\'{e}methi, {\it Classification of rational unicuspidal projective curves whose singularities have one Puiseux pair}, Real and complex singularities, 31-45, 2007.

\bibitem{gi}  E. Giroux, {\it G\'{e}om\'{e}trie de contact: de la dimension trois vers les dimensions sup\'{e}rieures},  Proceedings of the International Congress of Mathematicians, Vol. II (Beijing, 2002), 405-414, Higher Ed. Press, Beijing, 2002.


\bibitem{g} J. Gonzalo, {\it  Branched covers and contact structures}, Proc. Amer. Math. Soc. 101 (1987), no. 2, 347-352. 

\bibitem{go} C. M. Gordon, {\it Knots whose branched cyclic coverings have periodic homology},  Trans. Amer. Math. Soc. 168 (1972), 357-370.

\bibitem{hkp} S. Harvey, K. Kawamuro and O. Plamenevskaya, {\it On transverse knots and branched covers}, Int. Math. Res. Not. IMRN 2009, no. 3, 512-546.

\bibitem{h} K. Honda, {\it On the classification of tight contact structures. I}, Geom. Topol. 4 (2000), 309-368. 

\bibitem{k1} A. Kaloti, {\it Stein fillings of planar open books}, arXiv:1311.0208.

\bibitem{k2} L. H. Kauffman, {\it On knots}, Annals of Mathematics Studies, 115. Princeton University Press, Princeton, NJ, 1987. xvi+481 pp.

\bibitem{k}  R. C. Kirby,  {\it The topology of 4-manifolds}, Lecture Notes in Mathematics, 1374. Springer-Verlag, Berlin, 1989. vi+108 pp.

\bibitem{l}  J. Levine, {\it Invariants of knot cobordism}, Invent. Math. 8 (1969), 98-110.



\bibitem{li1} T. J. Li, {\it  Symplectic 4-manifolds with Kodaira dimension zero}, J. Differential Geom. 74 (2006), no. 2, 321-352.

\bibitem{li2} T. J. Li, {\it  Quaternionic bundles and Betti numbers of symplectic 4-manifolds with Kodaira dimension zero}, Int. Math. Res. Not. 2006, Art. ID 37385, 28 pp.

\bibitem{lmy} T. J. Li, C. Y. Mak and K. Yasui, {\it Calabi-Yau caps, uniruled caps and symplectic fillings}, Proc. Lond. Math. Soc. 114 (2017), no. 1, 159-187.

\bibitem{lma} T. J. Li and C. Y. Mak,  {\it The Kodaira dimension of contact 3-manifolds and geography of symplectic fillings}, Int. Math. Res. Not. IMRN 2020, no. 17, 5428-5449.

\bibitem{lin} F. Lin,  {\it Indefinite Stein fillings and PIN(2)-monopole Floer homology}, Selecta Math. (N.S.) 26 (2020), no. 2, Paper No. 18, 15 pp.

\bibitem{l1} P. Lisca, {\it On symplectic fillings of lens spaces}, Trans. Amer. Math. Soc. 360 (2008), no. 2, 765-799.

\bibitem{lm} C. Livingston and A. H. Moore, KnotInfo: Table of Knot Invariants, knotinfo.math.indiana.edu

\bibitem{lw} S. Lisi and C. Wendl, {\it Spine removal surgery and the geography of symplectic fillings}, Michigan Math. J. 70 (2021), no. 2, 403-422.

\bibitem{lp}  A. Loi  and R. Piergallini, {\it Compact Stein surfaces with boundary as branched covers of $B^4$}, Invent. Math. 143 (2001), no. 2, 325-348.

\bibitem{m} D. McDuff, {\it  The structure of rational and ruled symplectic 4-manifolds}, J. Amer. Math. Soc. 3 (1990), no. 3, 679-712.

\bibitem{ms} J. W. Morgan and Z. Szab\'{o}, {\it Homotopy K3 surfaces and mod 2 Seiberg-Witten invariants}, Math. Res. Lett. 4 (1997), no. 1, 17-21.

\bibitem{ns} A. N\'{e}methi and A. Szil\'{a}rd, { \it  Milnor fiber boundary of a non-isolated surface singularity}, Lecture Notes in Mathematics, 2037. Springer, Heidelberg, 2012. xii+240 pp.


\bibitem{os} P. Ozsv\'{a}th and Z. Szab\'{o}, {\it
Holomorphic disks and genus bounds},
Geom. Topol. 8 (2004), 311-334.

\bibitem{p} O. Plamenevskaya, {\it Transverse knots, branched double covers and Heegaard Floer contact invariants,} J. Symplectic Geom. 4 (2006), no. 2, 149-170.

\bibitem{r} L. Rudolph, {\it Algebraic functions and closed braids}, Topology 22 (1983), 191-201.

\bibitem{r1} L. Rudolph, {\it Knot theory of complex plane curves}, Handbook of knot theory, 349-427, 2005.

\bibitem{sv} S. Sivek and J. Van Horn-Morris, {\it Fillings of unit cotangent bundles}, Math. Ann. 368 (2017), no. 3-4, 1063-1080.

\bibitem{s} A. I. Stipsicz, {\it Gauge theory and Stein fillings of certain 3-manifolds}, Turkish J. Math. 26 (2002), no. 1, 115-130.

\bibitem{s1} A. I. Stipsicz, {\it On the geography of Stein fillings of certain 3-manifolds}, Michigan Math. J. 51 (2003), no. 2, 327-337.

\bibitem{t} B. Tosun, {\it Tight small Seifert fibered manifolds with $e_0=-2$}, Algebr. Geom. Topol. 20 (2020), no. 1, 1-27.

\bibitem{t1} A. Tristram, {\it Some cobordism invariants for links}, Proc. Camb. Phil. Soc. 66 (1969), 251-264.

\bibitem{v} O. J.  Viro, {\it
Branched coverings of manifolds with boundary, and invariants of links. I}, Izv. Akad. Nauk SSSR Ser. Mat. 37 (1973), 1241-1258.

\end{thebibliography}
\end{document}